\newcounter{capitalcounter}
\newcounter{claimcounter}
\newcounter{myfootnote}[page]
\newtheorem{lemma}{Lemma}[section]
\newtheorem{theorem}[lemma]{Theorem}
\theoremstyle{definition}
\newtheorem{defn}[lemma]{Definition}
\newtheorem{claim}{Claim}[subsection]
\newtheorem{prop}[lemma]{Proposition}
\global\long\def\lem{\begin{lemma}}
\global\long\def\ma{\end{lemma}}
\global\long\def\de{\begin{defn}}
\global\long\def\fn{\end{defn}}
\global\long\def\E{\mathbb{E}}
\global\long\def\P{\mathbb{P}}
\global\long\def\eps{\varepsilon}
\global\long\def\N{\mathbb{N}}
\global\long\def\e{\varepsilon}
\newcommand\subsetsim{\mathrel{%
  \ooalign{\raise0.2ex\hbox{$\subset$}\cr\hidewidth\raise-0.8ex\hbox{\scalebox{0.9}{$\sim$}}\hidewidth\cr}}}
\newcommand\myphead[1]{\medskip

\noindent\textbf{#1}}
\newcommand\cP{\mathcal{P}}
\newcommand\cH{\mathcal{H}}
\newif\ifproofdone
\newcounter{propcounter}
\renewcommand\vec[1]{\overrightarrow{#1}}
\title{Spanning cycles in random directed graphs}
\author{Richard Montgomery\footnote{Mathematics Institute, University of Warwick, Coventry, CV4 7AL, UK.
richard.montgomery@warwick.ac.uk. Research supported by the European Research Council (ERC) under the European Union's Horizon
2020 research and innovation programme (grant agreement no.\ 947978), and the Leverhulme Trust.}}
\begin{document}

\maketitle

\begin{abstract}
We show that, in almost every $n$-vertex random directed graph process, a copy of every possible $n$-vertex oriented cycle will appear strictly before a directed Hamilton cycle does, except of course for the directed cycle itself.
Furthermore, given an arbitrary $n$-vertex oriented cycle, we determine the sharp threshold for its appearance in the binomial random directed graph. These results confirm, in a strong form, a conjecture of Ferber and Long.

%\vspace{0.2cm}

%\noindent\emph{Keywords:} Random directed graphs, Hamiltonicity, universality, sharp thresholds.

\end{abstract}

%%%%%%%%%%%%%%%%%%%%%%%%%%%%%%%%%%%%%%%%%%%%%%%%%%%%%%%%%%%%%%%%%%%%%%%%%%%%%%%%%%%%%%%%%%%%%%%%%%%%%%%%%%%%%%%%%%%%%%%%%%%%%%%%%%%%%%%%%%%%%%%%%%%%%%%%%%%%%%%%%%%%%%%%%%%%%%%%%%%%%%
%%%%%%%%%%%%%%%%%%%%%%%%%%%%%%%%%%%%%%%%%%%%%%%%%%%%%%%%%%%%%%%%%%%%%%%%%%%%%%%%%%%%%%%%%%%%%%%%%%%%%%%%%%%%%%%%%%%%%%%%%%%%%%%%%%%%%%%%%%%%%%%%%%%%%%%%%%%%%%%%%%%%%%%%%%%%%%%%%%%%%%

\section{Introduction}
Hamilton cycles in random graphs have been extensively studied since the early work of Erd\H{o}s and R\'enyi~\cite{ER59} on random graphs. Improving on seminal work by P\'osa~\cite{posa76} and Korshunov~\cite{kor76}, the sharp appearance threshold of the Hamilton cycle was determined in 1983 by Bollob\'as~\cite{bollo83}, and Koml\'os and Szemer\'edi~\cite{KSham}, who showed that, if $p=(\log n+\log\log n+\omega(1))/n$, then the binomial random graph $G(n,p)$ is, with high probability, Hamiltonian. If $p=(\log n+\log\log n-\omega(1))/n$, then, with high probability, $G(n,p)$ has a vertex with degree at most 1, and therefore contains no Hamilton cycle. For such ranges of $p$, then, with high probability, the property of Hamiltonicity in $G(n,p)$ is exactly concurrent with the property of the minimum degree being at least 2.

Such a result can be made more precise by considering the $n$-vertex \emph{random graph process} $G_0$, $G_1$, \dots, $G_{n(n-1)/2}$, where $G_0$ is the graph with vertex set $[n]$ and no edges, and each graph $G_i$, $i\geq 1$, in the sequence, is formed from $G_{i-1}$ by adding a new edge taken uniformly at random from the non-edges of $G_{i-1}$. Independently, Bollob\'as~\cite{bollo84}, and  Ajtai, Koml\'os and Szemer\'edi~\cite{AKS85}, showed that, in almost every random graph process, the first graph $G_i$ with minimum degree at least 2 is Hamiltonian. Further results on the Hamiltonicity of random graphs, including counting and packing results, can be found in Frieze's comprehensive bibliography~\cite{friezebib}.

Hamilton cycles have also been extensively studied in random directed graphs (digraphs). Here, a directed Hamilton cycle is a cycle through every vertex of a digraph whose edges are directed in the same direction around the cycle. In 1980, McDiarmid~\cite{McD83} gave a beautiful coupling argument which, when applied to Hamilton cycles, shows that, if $p=(\log n+\log\log n+\omega(1))/n$, then $D(n,p)$ is Hamiltonian with high probability, where $D(n,p)$ is  the binomial random digraph with $n$ vertices and edge probability $p$. This coupling argument is crucial to this paper, and is covered in Sections~\ref{sec:guide} and~\ref{subsec:mcd}. For \emph{directed} Hamiltonicity, the natural local obstruction is that each vertex must have at least one in-neighbour and at least one out-neighbour so that a directed cycle may pass through it. In $D(n,p)$, if $p=(\log n+\omega(1))/n$, then, with high probability, each vertex will have this property, while, if $p=(\log n-\omega(1))/n$, with high probability at least one vertex will not. Similarly to the undirected case, this local obstruction coincides with when we can expect the binomial random digraph to be Hamiltonian, as shown by Frieze~\cite{frieze88}. That is, if $p=(\log n+\omega(1))/n$, then, with high probability, $D(n,p)$ is Hamiltonian.

The $n$-vertex random digraph process $D_0,D_1,\ldots,D_{n(n-1)}$ begins with the digraph $D_0$ with vertex set $[n]$ and no edges, and each digraph $D_i$, $i\geq 1$, in the sequence is formed from $D_{i-1}$ by adding a new directed edge taken uniformly at random from the non-edges of $D_{i-1}$. Frieze~\cite{frieze88} gave the corresponding result for Hamilton cycles in the random digraph process to that shown in the random graph process. That is, in almost every random digraph process, the first digraph in which every vertex has in- and out-degree at least 1 is Hamiltonian.

The directed $n$-vertex cycle is the most natural generalisation of the undirected $n$-vertex cycle, but we may also consider other $n$-vertex oriented cycles. An oriented cycle is any digraph formed by taking an undirected cycle and orienting its edges.
Ferber and Long~\cite{FL15} studied such cycles in the binomial random digraph, and noted that McDiarmid's coupling argument gives that, for any $n$-vertex oriented cycle $C$,  if $p=(\log n+\log\log n+\omega(1))/n$, then $D(n,p)$ contains a copy of $C$ with high probability. Furthermore, they conjectured that this should be true as long as $p=(\log n+\omega(1))/n$.

The local obstruction to a copy of an $n$-vertex oriented cycle $C$ in $D(n,p)$ is different depending on the pattern of directions on $C$. For example, consider the anti-directed Hamilton cycle, where, for even $n$, the edges change direction at every opportunity around the cycle so that each vertex has in-degree 0 or out-degree 0. If $p=(\log n+2\log\log n+\omega(1))/2n$, then with high probability every vertex in $D(n,p)$ has out-degree at least 2 or in-degree at least 2, and thus has no local obstruction to the containment of an anti-directed cycle. This is tight up the function $\omega(1)$, and, very recently, Frieze, P\'erez-Gim\'enez and Pra\l at~\cite{FPP} confirmed that this is also when we may expect an anti-directed Hamilton cycle to appear in $D(n,p)$.
Thus, compared to the directed Hamilton cycle, we need only around one half of the edge probability to typically find a anti-directed Hamilton cycle. More generally, Frieze, P\'erez-Gim\'enez and Pra\l at~\cite{FPP} studied $n$-vertex cycles in which the pattern of edges repeats after a fixed interval (with respect to $n$), and determined which local conditions are likely to imply the existence of such a cycle in the random digraph process. Indeed, they showed that, except for the anti-directed and directed Hamilton cycle, these cycles are likely to appear in the random digraph process as soon as each vertex has total in- and out-degree at least 2.

In this paper, we show that, with high probability, a much larger range of $n$-vertex oriented cycles will appear in the random digraph process as soon as each vertex has total in- and out-degree at least 2. Our condition on the cycle is only that it has at least $n^{1/2+o(1)}$ vertices where the direction of the edges changes (that is, vertices which have in- or out-degree 0) and at least polylogarithmically many vertices where the direction of the edges is maintained (that is, which have in- and out-degree 1). %These vertices are used to cover the low degree vertices in the first digraph with total in- and out-degree at least 2.

Notably, we show that these cycles are likely to appear \emph{simultaneously} at this point in the random digraph process. Determining the threshold for the simultaneous containment of every possible $n$-vertex oriented cycle in $D(n,p)$ was the original motivation behind this work. For this, we show that, in almost every random digraph process, the first digraph in which every vertex has both in- and out-degree at least 1 contains a copy of every $n$-vertex oriented cycle. In fact, the directed Hamilton cycle is likely to be strictly the last such cycle to appear. From these results, it follows simply that, if $p=(\log n+\omega(1))/n$, then $D(n,p)$ contains a copy of every $n$-vertex oriented cycle.
In particular, this confirms the conjecture of Ferber and Long~\cite{FL15} stated above.
These results are summarised in the following theorem.

\begin{restatable}{theorem}{hittime}\label{hittime}
Let $D_0,D_1,\ldots,D_{n(n-1)}$ be the $n$-vertex random digraph process. Let $m_1$ be the largest integer $m$ for which $\delta^+(D_m)=0$ or $\delta^-(D_m)=0$. Then, with high probability,
\begin{enumerate}[label = (\roman{enumi})]
\item $D_{m_1}$ contains a copy of every $n$-vertex oriented cycle except for the directed $n$-vertex cycle, and
\item $D_{m_1+1}$ contains a copy of every $n$-vertex oriented cycle.
\end{enumerate}
Let $m_0$ be the smallest integer $m$ for which $d^+_{D_m}(v)+d^-_{D_m}(v)\geq 2$ for every $v\in V(D_m)$. Then, with high probability,
\begin{enumerate}[label = (\roman{enumi})]\addtocounter{enumi}{2}
\item $D_{m_0}$ contains a copy of every $n$-vertex oriented cycle with at least $n^{1/2}\log^3n$ changes of direction and at most $n-\log^4n$ changes of direction.
\end{enumerate}
\end{restatable}

We will also find, given any $n$-vertex oriented cycle $C$, the sharp threshold for the appearance of $C$ in $D(n,p)$, where the thresholds vary from $p=\log n/2n$ to $p=\log n/n$ (see Theorem~\ref{sharpthres}). If $C$ has few vertices with in- and out- degree 1, then the random graph must have few vertices with both in- and out-degree exactly 1. If $C$ has few vertices with in-degree 0 or out-degree 0, then the random graph must have few vertices with in-degree 0 or out-degree 0.
As $p$ increases from $(1+o(1))\log n/2n$ to $(1+o(1))\log n/n$, the expected number of vertices in $D$ with in- or out-degree 0 decreases from $n^{1/2+o(1)}$ to 0. The expected number of vertices with both in- and out-degree 1 is much smaller, and, as $p$ increases in this interval, it quickly decreases from $n^{o(1)}$ to 0. Thus, for the sharp threshold for $C$ we focus on the vertices in $C$ with in- or out-degree 0. We define $p_C$ below, before showing that this is the sharp threshold in Theorem~\ref{sharpthres}.

\begin{defn}
Given an $n$-vertex oriented cycle $C$, let $\lambda(C)$ be the number of vertices of $C$ with in- or out-degree 0 in $C$. If $\lambda(C)=0$, then let $p_C=\log n/n$, while if $\lambda(C)>0$, let $p_C=\max\{\log n,2(\log n-\log \lambda(C))\}/2n$.
\end{defn}

\begin{restatable}{theorem}{sharpthres}\label{sharpthres}
For each $\eps>0$ and function $p=p(n)$, with high probability, $D(n,p)$ contains a copy of every $n$-vertex oriented cycle with $p_C\leq (1-\eps)p$ and no copy of any $n$-vertex oriented cycle with $p_C\geq (1+\eps)p$.
\end{restatable}

Both Theorem~\ref{hittime} and Theorem~\ref{sharpthres} follow from a stronger theorem, Theorem~\ref{thmrandprocess}, which gives a better indication of where in the random digraph process we can expect an arbitrary spanning oriented cycle to appear.
However, there are $n$-vertex cycles $C$ whose point of appearance cannot be (with high probability) determined only from the evolving degree sequence of the $n$-vertex random digraph process. For example, consider an $n$-vertex cycle $C$ with exactly two vertices with out-degree 0 and exactly two vertices with in-degree 0, which are in sequence $\ell$ vertices apart on the cycle, for some function $\ell=\ell(n)$. With positive probability the last two vertices in the $n$-vertex random digraph process $D_0,D_1,\ldots,D_{n(n-1)}$ with in- or out-degree 0 will both have in-degree 0. Then, whether a copy of $C$ appears in the first digraph $D_j$ which has at most 2 vertices with out-degree 0 and at most 2 vertices with in-degree 0 can depend, for certain values of $\ell\approx \log n/2\log\log n$ on the different paths in $D_j$ with length $2\ell$ between the last two vertices with in-degree 0. Carefully selecting the value of $\ell$, we can find a sequence $\ell=\ell(n)$ where the probability a copy of $C$ exists in $D_j$ is bounded away from 0 and 1.

The key new method used by this paper is a combination of constructive techniques along with McDiarmid's coupling. After stating our notation, this is sketched in detail in Section 2, before we state our main technical theorem, Theorem~\ref{mainthm}, and its application to the random digraph process, Theorem~\ref{thmrandprocess}. In Section~\ref{sec:provemainthm}, we prove Theorem~\ref{mainthm}, from which we then deduce Theorem~\ref{thmrandprocess} in Section~\ref{sec:di}.

\section{Preliminaries}

\subsection{Notation}
A digraph $D$ has vertex set $V(D)$ and edge set $E(D)$, where $E(D)$ is a collection of ordered distinct vertex pairs from $V(D)$. We let $e(D)=|E(D)|$ and $|D|=|V(D)|$. We say that $uv$ is an edge of $D$ if $(u,v)\in E(D)$, and consider this edge as directed from $u$ to $v$. Where $uv\in E(D)$, we say that $v$ is an out-neighbour (or $+$-neighbour) of $u$ and $u$ is an in-neighbour (or $-$-neighbour) of $v$. For each $\diamond\in \{+,-\}$, we let $N^\diamond_D(v)$ be the set of $\diamond$-neighbours of $v$ in $D$, and set $d^\diamond_D(v)=|N^\diamond_D(v)|$. Where it is clear from context, we omit the subscript. We let $\Delta^+(D)$ and $\delta^+(D)$ be the maximum and minimum out-degree of $D$ respectively, and define $\Delta^-(D)$ and $\delta^-(D)$ similarly. We use notation like $\bar{D}$ to denote a digraph related to $D$ (defining it precisely each time), but never consider complements of graphs or digraphs or use this to denote them.
Where $\pm$ is used, we mean that the statement holds with $\pm$ replaced by both $+$ and by $-$. The length of a path is how many edges it has.

Given $A,B\subset V(D)$, $v\in V(D)$ and $\diamond\in \{+,-\}$, we let $N^\diamond_D(v,B)=N^\diamond_D(v)\cap B$, and $N^\diamond_D(A)=(\cup_{w\in A}N^\diamond_D(w))\setminus A$ and $N^\diamond_D(A,B)=N^\diamond_D(A)\cap B$. The digraph $D[A]$ is the digraph $D$ induced on the vertex set $A$. Given an edge $e$ with vertices in $V(D)$, the digraphs $D+e$ and $D-e$ have vertex set $V(D)$ and edge sets $E(D)\cup \{e\}$ and $E(D)\setminus \{e\}$ respectively. Given a vertex set $A\subset V(D)$, the digraph $D-A$ is the digraph $D[V(D)\setminus A]$. We use similar notation for, for example, $D-v$ with $v\in V(D)$, and $D+E$, where $E$ is a set of edges.

Given two digraphs $H$ and $G$, we say that $H\subsetsim G$ if $G$ contains a copy of $H$. We denote by $D(n,p)$ the binomial random digraph with vertex set $[n]=\{1,\ldots,n\}$, in which each possible edge $uv$ is included independently at random with probability $p$. In a digraph $D$, we say vertices $u,v\in V(D)$ are at least $k$ apart if their graph distance in the underlying undirected graph of $D$ is at least $k$, where the underlying graph of a digraph $D$ has vertex set $V(D)$ and edge set $\{uv:\vec{uv}\text{ or }\vec{vu}\in E(D)\}$.

Our notation for graphs is analogous to that defined above for digraphs. We also use standard asymptotic notation such as $O(n)$, $\Theta_C(n)$, where the implicit constant(s) depend on the variable in the subscript, if any. We say an event holds \emph{with high probability} if the probability which with it holds tends to 1 as $n\to \infty$. For each $n\in \N$, we use $\log^{[2]}n=\log\log n$ and $\log^{[3]}n=\log\log\log n$. All logarithms are natural, and we omit rounding signs whenever they are not crucial.

\subsection{Proof overview}\label{sec:guide}
In our proof sketch we will concentrate on how to show that many different spanning cycles appear simultaneously in the binomial random digraph.
Let us say then that $p=\lambda\log n/n$, for some large constant $\lambda$, and that we wish to show that $D(n,p)$ contains a copy of every $n$-vertex oriented cycle with high probability. We note first that a simple union bound is not strong enough for this. Indeed, given an $n$-vertex cycle $C$ whose edges are oriented with any directions, we have, for $D=D(n,p)$ and an arbitrary $v\in [n]$,
that
\[
\P(C\subsetsim D(n,p))\leq \P(d_D^+(v)+d_D^-(v)>0)= 1-(1-p)^{2(n-1)}=1-\exp(-\Theta_\lambda(\log n)).
\]
As there are $2^{(1-o(1))n}$ oriented cycles with length $n$ up to isomorphism, we thus cannot prove a bound on $\P(C\subsetsim D(n,p))$ before taking a union bound over all the $n$-vertex oriented cycles $C$.

On the other hand, it would suffice to find some pseudorandom properties which $D(n,p)$ has with high probability and show that any digraph with these properties contains any $n$-vertex oriented cycle $C$. However, for general cycles $C$, this seems to be rather challenging.
Instead, we combine the `union bound' approach and the `pseudorandom' approach, as follows.

Taking two random digraphs $D_0$ and $D_1$, each distributed as $D(n,p/2)$, we define a notion of a `pseudorandom digraph' and show that
\begin{equation*}\label{dpseud}
\P(D_0\text{ is pseudorandom})=1-o(1)
\end{equation*}
and, for any $n$-vertex oriented cycle $C$,
\begin{equation}\label{depprob}
\P(C\subsetsim D_0\cup D_1|D_0\text{ is pseudorandom})=1-\exp(-\omega(n)).
\end{equation}
Choosing first the random digraph $D_0$, and then taking a union bound over all cycles $C$, these statements easily combine (see Section~\ref{sec:randpf}) to show that
\[
\P(D(n,p)\text{ contains every $n$-vertex cycle})\geq \P(D_0\cup D_1\text{ contains every $n$-vertex cycle})=1-o(1).
\]
%where the inequality follows by a very standard coupling technique.

Instead of proving \eqref{depprob} directly, we now employ McDiarmid's coupling technique (as discussed extensively in Section~\ref{subsec:mcd}). For this, consider the following random digraph, $D^\ast(n,q)$.
\begin{defn}
    Let $D^*(n,q)$ be the random digraph with vertex set $[n]$ where each pair of edges ${uv}$ and ${vu}$ are included together independently at random with probability $q$, and otherwise excluded.
\end{defn}
Let $D_1^\ast$ be distributed as $D^*(n,p/2)$. A simple use of McDiarmid's coupling technique (see Section~\ref{subsec:mcd}) shows that
\[
\P(C\subsetsim D_0\cup D_1|D_0\text{ is pseudorandom})\geq \P(C\subsetsim D_0\cup D^\ast_1|D_0\text{ is pseudorandom}).
\]
Therefore, to prove~\eqref{depprob}, it is sufficient to show that
\begin{equation}\label{depprob2}
\P(C\subsetsim D_0\cup D^\ast_1|D_0\text{ is pseudorandom})=1-\exp(-\omega(n)).
\end{equation}

Now, we observe that $D_1^\ast$ has the same distribution as the random graph $G(n,p/2)$ with each edge $uv$ replaced by the two directed edges ${uv}$ and ${vu}$. This allows us to use (undirected) graph techniques to find paths and cycles in the (undirected) underlying graph of $D_1^\ast$, safe in the knowledge that such a path or cycle will exist in $D_1^\ast$ with any orientations on its edges. This is the benefit of working with $D_1^\ast$. However, for~\eqref{depprob2}, we can only use graph techniques which use properties of $G(n,p/2)$ which hold with probability $1-\exp(-\omega(n))$.

Key here is that one part of the standard proof of the Hamiltonicity of $G(n,p)$ uses a `sprinkling' technique that works with probability $1-\exp(-\omega(n))$. So that we may recall this, let $G_0$ and $G_1$ be independent random graphs, each distributed as $G(n,p/2)$. Typically, following the approach pioneered by P\'osa~\cite{posa76}, we show that $G_0$ is likely to be an `expander' (see Section~\ref{sec:comp}), and then, given that $G_0$ is an `expander', that $G_0\cup G_1$ is likely to contain a Hamilton cycle. In fact, we have, where $C_n$ is an $n$-vertex cycle, that
\begin{equation}\label{posaprob}
\P(C_n\subsetsim G_0\cup G_1|G_0\text{ is an `expander'})=1-\exp(-\omega(n)).
\end{equation}
This is the undirected version of \eqref{depprob2}, our version of which we prove as Lemma~\ref{lem-posa}.

Unfortunately, the proof of \eqref{posaprob} (using the extension-rotation method as given in Section~\ref{subsec:posa}) cannot be applied to directed graphs to get an arbitrary cycle. Instead, given a pseudorandom digraph $D_0$, we split $D_1^\ast$ into two random digraphs $D_2^\ast$ and $D_3^\ast$ with equal edge probability and proceed with the following 4 steps.

\begin{enumerate}[label = {\bfseries \Alph{enumi}}]
\item We reveal $D^\ast_2$ to (with very high probability) identify a set of `bad' vertices $B\subset [n]$ which are hard to cover by paths or cycles in $D^\ast_2$.\label{stepA}
\item We use $D_0$ to find sections of the cycle $C$ covering these bad vertices, where the sections have endvertices in $[n]\setminus B$.\label{stepB}
\item We connect these sections using $D^\ast_2$ into a single section of the cycle (using that the endvertices are not `bad'). Say the path found in $D_0\cup D^\ast_2$ is a path $P$ with endvertices $x$ and $y$. When we do this, we ensure that $D^\ast_2-V(P-x-y)$ is an `expander'.\label{stepC}
\item Using our version of \eqref{posaprob}, we reveal $D^\ast_3$ and show that, given $D^\ast_2-V(P-x-y)$ is an `expander', an $x,y$-path through every vertex in $(D^\ast_2\cup D^\ast_3)-V(P-x-y)$ exists with very high probability.\label{stepD}
\end{enumerate}

These steps are given in more detail in Section~\ref{sec:provemainthm}. We next give our definition of pseudorandomness and the statement of our main technical theorem, before discussing how it can be applied to the random digraph process.

\subsection{Pseudorandomness and our main technical theorem}\label{subsec:pseud}
For our definition of pseudorandomness, we take the simplest conditions we need for our methods to work. We require our pseudorandom digraph to satisfy some maximum in- and out-degree condition (\ref{pseudo0} below), some minimum in- and out-degree condition (\ref{pseudo1} below), and a condition that gives rise to some digraph `expansion' (\ref{pseudo2} below). Additionally, the pseudorandom digraph $D$ has an \emph{exceptional set} of vertices $X$, which will arise from low in- or out-degree vertices in the random digraph process.

\begin{defn}\label{def:pseud}
Given an $n$-vertex digraph $D$ and a vertex set $X\subset V(D)$, $D$ is \emph{pseudorandom with exceptional set $X$} if the following hold.
\stepcounter{propcounter}
\begin{enumerate}[label = \textbf{\Alph{propcounter}\arabic{enumi}}]
\item $\Delta^\pm(D)\leq 100\log n$.\label{pseudo0}
\item For each $v\in V(D)$, $d^\pm(v,V(D)\setminus X)\geq \log n/500$.\label{pseudo1}
\item For any sets $A,B\subset V(D)$ and $\diamond\in \{-,+\}$, with $|A|\leq n\log\log n/\log n$, and such that, for each $v\in A$, $d^\diamond(v,B)\geq (\log n)^{2/3}$, we have $|B|\geq |A|(\log n)^{1/3}$.\label{pseudo2}
\end{enumerate}
\end{defn}

We wish to apply our methods to digraphs in the $n$-vertex random digraph process which have minimum in- or out-degree strictly less than $\log n/500$. As \ref{pseudo1} will not hold, such a digraph is not pseudorandom. However, as discussed below, we will modify such a digraph into a pseudorandom digraph. Once we find a spanning cycle, we will undo this modification, and therefore the spanning cycle we find will need to have some additional properties. This motivates our main theorem, Theorem~\ref{mainthm}, where prespecified vertices in the cycle are copied to prespecified vertices in the exceptional set $X$.

\begin{restatable}{theorem}{maintheorem}\label{mainthm}
There is some $n_0$ such that the following holds for each $n\geq n_0$. Let $D_0$ be an $n$-vertex digraph containing $X\subset V(D_0)$, with $|X|\leq n^{3/4}$, so that $D_0$ is pseudorandom with exceptional set $X$. Let $C$ be an $n$-vertex oriented cycle and let $P\subset C$ be a path with length at most $n/10$. Let $f:X\to V(P)$ be an injection such that the vertices in $f(X)$ are pairwise at least $20\log n/\log\log n$ apart on $P$, and let $D_1=D(n,\log n/(10^3n))$.

Then, with probability at least $1-2\exp(-2n)$, $D_0\cup D_1$ contains a copy of $C$ in which $f(x)$ is copied to $x$ for each $x\in X$.
\end{restatable}

We prove this theorem with the strategy outlined in Section~\ref{sec:guide}. To apply it to the random digraph process, we first modify the random digraphs. This is explained in more detail in Section~\ref{sec:di}, but, roughly, before Theorem~\ref{mainthm} is applied, we first use a conditioning argument (from Krivelevich, Lubetzky and Sudakov~\cite{krivcores}) to reserve some random edges to act as $D_1$ in Theorem~\ref{mainthm}, and then identify the low in- or out-degree vertices in the random digraph process. Assigning each low degree vertex two neighbours, we embed well-spaced subpaths of length 2 from the cycle to these vertices and their chosen neighbours.
We then contract each of these well-spaced subpaths of length 2 in the cycle to a single vertex. Next, we contract each embedded subpath of length 2 in the random digraph process and adjust the edges in and out of each contracted vertex so that, if a copy of the contracted cycle is found in the contracted digraph with the contracted vertices in the cycle mapped appropriately to the contracted vertices in the digraph, then undoing the contractions will find a copy of the original cycle in the original digraph (this is described in detail at the start of Section~\ref{sec:randpf}). Thus, we can apply Theorem~\ref{mainthm} using the random edges we have reserved as $D_1$ and with the contracted vertices as the exceptional set $X$. Altogether, this will give us the following theorem.

%\begin{theorem}\label{thmrandprocess}
\begin{restatable}{theorem}{thmrandprocess}\label{thmrandprocess}
Let $D_0,D_1,\ldots,D_{n(n-1)}$ be the $n$-vertex random digraph process. For each $i$ with $0\leq i\leq n(n-1)$, let $s_i$ be the number of vertices in $D_i$ with in-degree or out-degree $0$ and let $t_i$ be the number of vertices in $D_i$ with in-degree 1 and out-degree 1.

Then, with high probability, the following holds for each $i\in \{0,1,\ldots,n(n-1)\}$. If $d^+_{D_i}(v)+d^-_{D_i}(v)\geq 2$ for each $v\in V(D_i)$, then $D_i$ contains a copy of every $n$-vertex oriented cycle with at least $1+(s_i-1)\log n$ changes in direction and at most $n-1-(t_i-1)\log n$ changes in direction.
\end{restatable}

Theorem~\ref{hittime} and Theorem~\ref{sharpthres} follow straight-forwardly from Theorem~\ref{thmrandprocess}. We show this in detail in Section~\ref{sec:randpf}, following the proof of Theorem~\ref{thmrandprocess}. In Section~\ref{sec:provemainthm}, we prove Theorem~\ref{mainthm}.

%%%%%%%%%%%%%%%%%%%%%%%%%%%%%%%%%%%%%%%%%%%%%%%%%%%%%%%%%%%%%%%%%%%%%%%%%%%%%%%%%%%%%%%%%%%%%%%%%%%%%%%%%%%%%%%%%%%
%%%%%%%%%%%%%%%%%%%%%%%%%%%%%%%%%%%%%%%%%%%%%%%%%%%%%%%%%%%%%%%%%%%%%%%%%%%%%%%%%%%%%%%%%%%%%%%%%%%%%%%%%%%%%%%%%%%
%%%%%%%%%%%%%%%%%%%%%%%%%%%%%%%%%%%%%%%%%%%%%%%%%%%%%%%%%%%%%%%%%%%%%%%%%%%%%%%%%%%%%%%%%%%%%%%%%%%%%%%%%%%%%%%%%%%
%%%%%%%%%%%%%%%%%%%%%%%%%%%%%%%%%%%%%%%%%%%%%%%%%%%%%%%%%%%%%%%%%%%%%%%%%%%%%%%%%%%%%%%%%%%%%%%%%%%%%%%%%%%%%%%%%%%

%%%%%%%%%%%%%%%%%%%%%%%%%%%%%%%%%%%%%%%%%%%%%%%%%%%%%%%%%%%%%%%%%%%%%%%%%%%%%%%%%%%%%%%%%%%%%%%%%%%%%%%%%%%%%%%%%%%
%%%%%%%%%%%%%%%%%%%%%%%%%%%%%%%%%%%%%%%%%%%%%%%%%%%%%%%%%%%%%%%%%%%%%%%%%%%%%%%%%%%%%%%%%%%%%%%%%%%%%%%%%%%%%%%%%%%
%%%%%%%%%%%%%%%%%%%%%%%%%%%%%%%%%%%%%%%%%%%%%%%%%%%%%%%%%%%%%%%%%%%%%%%%%%%%%%%%%%%%%%%%%%%%%%%%%%%%%%%%%%%%%%%%%%%
%%%%%%%%%%%%%%%%%%%%%%%%%%%%%%%%%%%%%%%%%%%%%%%%%%%%%%%%%%%%%%%%%%%%%%%%%%%%%%%%%%%%%%%%%%%%%%%%%%%%%%%%%%%%%%%%%%%

\section{Proof of Theorem~\ref{mainthm}}\label{sec:provemainthm}
In Section~\ref{sec:comp} we state our component results, before combining them to prove Theorem~\ref{mainthm} in Section~\ref{sec:mainthmproof}. We prove these component results in Sections~\ref{subsec:mcd}--\ref{subsec:posa}.

\subsection{Components of the proof of Theorem~\ref{mainthm}}\label{sec:comp}

The following component parts are contextualised in the proof guide in Section~\ref{sec:guide}.

\medskip

\setcounter{subsubsection}{1}

\noindent{\bfseries \thesubsubsection ~Coupling argument.} We prove the following implication of McDiarmid's coupling argument  in Section~\ref{subsec:mcd}, where an oriented graph is a directed graph in which there is at most one edge between any pair of vertices.

\begin{theorem}\label{thm-couplingarg}
Let $p\in [0,1]$ and $n\in \N$. Let $\cH$ be a set of oriented graphs with vertex set $[n]$, let $D_0$ be a digraph with vertex set $[n]$, let $D_1=D(n,p)$ and let $D^\ast_1=D^\ast(n,p)$.
Then, \[\P(\exists H\in \cH:H\subset D_0\cup D_1)\geq \P(\exists H\in \cH:H\subset D_0\cup D^\ast_1).\]
\end{theorem}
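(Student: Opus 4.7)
The plan is to interpolate between $D^\ast(n,p)$ and $D(n,p)$ one unordered pair at a time. Enumerate the $\binom{n}{2}$ unordered pairs of $[n]$ as $\pi_1,\dots,\pi_N$ and, for $0\le i\le N$, let $D^{(i)}$ be the random digraph on $[n]$ in which, independently across pairs, each pair $\pi_j$ with $j\le i$ is generated as in $D(n,p)$ (its two directed edges independent Bernoulli$(p)$), while each pair $\pi_j$ with $j>i$ is generated as in $D^\ast(n,p)$ (both directed edges included with probability $p$ and neither otherwise). Then $D^{(0)}$ is distributed as $D_1^\ast$ and $D^{(N)}$ as $D_1$, so it suffices to prove, for each $i$, the single-step inequality
\[
\P\!\bigl(\exists H\in\cH:\,H\subset D_0\cup D^{(i)}\bigr)\;\ge\;\P\!\bigl(\exists H\in\cH:\,H\subset D_0\cup D^{(i-1)}\bigr),
\]
and chain these together.

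For the single-step inequality I condition on the configuration of every pair other than $\pi_i=\{u,v\}$: these pairs have the same joint distribution in $D^{(i)}$ and $D^{(i-1)}$, so let $D'$ denote their union with $D_0$ and work with conditional probabilities given $D'$. For each $S\in\{\varnothing,\{uv\},\{vu\},\{uv,vu\}\}$, set
\[
a_S=\mb{1}\!\bigl[\exists H\in\cH:\,H\subset D'\cup S\bigr]\in\{0,1\}.
\]
Writing out the conditional probabilities that some $H\in\cH$ is contained by summing over the four possible values of the random set of directed edges on $\pi_i$ (under each of the two distributions), and subtracting the ``Type A'' total (under $D^\ast$) from the ``Type B'' total (under $D$), yields
\[
p(1-p)\,\bigl(a_{\{uv\}}+a_{\{vu\}}-a_\varnothing-a_{\{uv,vu\}}\bigr).
\]

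The crucial input is that members of $\cH$ are oriented, so no single $H$ contains both $uv$ and $vu$; hence any witness $H\subset D'\cup\{uv,vu\}$ is in fact a witness for $H\subset D'\cup\{uv\}$ or for $H\subset D'\cup\{vu\}$, giving $a_{\{uv,vu\}}=\max(a_{\{uv\}},a_{\{vu\}})$. Combined with the trivial monotonicity $a_\varnothing\le\min(a_{\{uv\}},a_{\{vu\}})$ and the $\{0,1\}$-valued identity $\max(x,y)+\min(x,y)=x+y$, this yields $a_{\{uv\}}+a_{\{vu\}}\ge a_\varnothing+a_{\{uv,vu\}}$, so the displayed difference is non-negative. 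Taking expectation over $D'$ gives the single-step inequality, and chaining over $i=1,\dots,N$ proves the theorem. I expect the only point requiring genuine care (the ``main obstacle'', albeit a minor one) to be this orientedness identity for $a_{\{uv,vu\}}$: once it is cleanly stated the rest of the argument is a short direct computation.
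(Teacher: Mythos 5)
Your proof is correct and is essentially the same as the paper's: you use the identical one-pair-at-a-time hybrid interpolation between $D^\ast(n,p)$ and $D(n,p)$, condition on all other pairs, and exploit the fact that an oriented $H$ uses at most one of $uv,vu$ to get the single-step monotonicity. The only cosmetic difference is that you compute the one-step difference algebraically as $p(1-p)\bigl(a_{\{uv\}}+a_{\{vu\}}-a_\varnothing-a_{\{uv,vu\}}\bigr)$ and bound it via the $\max/\min$ identity, whereas the paper argues by a short case analysis (containment already holds / can never hold / is created by the new pair), but the underlying observation is the same.
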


\medskip

\stepcounter{subsubsection}

\noindent{\bfseries \thesubsubsection ~Partitioning lemma.} We partition the vertex set of our pseudorandom digraph for the steps outlined in Section~\ref{sec:guide} using the following lemma, which is proved in Section~\ref{subsec:div}.
%division
\begin{lemma}\label{lem-div} There is some $n_0$ such that the following holds for each $n\geq n_0$.
Suppose $D_0$ is an $n$-vertex pseudorandom digraph with exceptional set $X\subset V(D_0)$ satisfying $|X|\leq n^{3/4}$.

Then, there is a partition $V_0\cup V_1\cup V_2$ of $V(D_0)\setminus X$ with $|V_1|=|V_2|=\lfloor n/4 \rfloor$ such that, for each $v\in V(D_0)$ and $i\in [2]$, we have $d^\pm(v,V_i)\geq \log n/5000$.
\end{lemma}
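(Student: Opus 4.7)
The plan is to construct the partition probabilistically via the Lov\'asz Local Lemma. Sample a uniformly random partition of $V(D_0) \setminus X$ into sets $V_0, V_1, V_2$ with $|V_1| = |V_2| = \lfloor n/4 \rfloor$ (this is well-defined since $|X| \leq n^{3/4}$ gives $|V(D_0) \setminus X| \geq n - n^{3/4}$, so $|V_0| \geq n/2 - o(n)$). The goal is to show that, with positive probability, for every $v \in V(D_0)$, $\diamond \in \{+,-\}$, and $i \in \{1,2\}$, the bad event $B_{v,\diamond,i}$ that $d^\diamond(v, V_i) < \log n/5000$ fails.

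By \ref{pseudo0} and \ref{pseudo1}, $\log n/500 \leq d^\diamond(v, V(D_0)\setminus X) \leq 100\log n$ for every $v$ and $\diamond$, so $d^\diamond(v, V_i)$ is hypergeometric with mean $\mu \geq \log n/2000$. Since $\log n/5000 \leq (2/5)\mu$, a standard Chernoff bound on the lower tail of a hypergeometric variable gives
\[
\P(B_{v,\diamond,i}) \leq \exp\!\left(-\tfrac{9\mu}{50}\right) \leq n^{-c_0}
\]
for some absolute constant $c_0 > 0$. Moreover, each $B_{v,\diamond,i}$ depends only on the labels of the vertices in $N^\diamond(v) \setminus X$, a set of size at most $100\log n$ by \ref{pseudo0}. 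Since any vertex $u \in V(D_0)\setminus X$ is an in- or out-neighbor of at most $200\log n$ other vertices, each bad event shares a dependent vertex with at most $4 \cdot 100\log n \cdot 200\log n = 8 \cdot 10^4 \log^2 n$ other bad events. For $n$ sufficiently large, $e \cdot n^{-c_0} \cdot 8\cdot 10^4 \log^2 n < 1$, so the Lov\'asz Local Lemma (in a form applicable to uniformly random partitions with fixed class sizes) produces an outcome in which no $B_{v,\diamond,i}$ holds. The resulting partition is the required $V_0, V_1, V_2$.

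The main technical obstacle is that, because the class sizes are fixed, the vertex labels are not independent, and the basic LLL for independent trials does not immediately apply. This is handled by the permutation (fixed-composition) form of the Lov\'asz Local Lemma, in which the dependency graph is governed by overlapping dependency sets and the fixed sizes only introduce compatible negative correlations. An equivalent route, if one prefers to avoid the permutation variant, is to instead sample each vertex independently to $V_0, V_1, V_2$ with probabilities $(1/2, 1/4, 1/4)$ and target the stronger bound $\log n/2500$ via the standard LLL, then rebalance the class sizes to $\lfloor n/4\rfloor$ by relocating $O(\sqrt{n\log n})$ vertices chosen randomly so that, with positive probability, no vertex's neighborhood contains enough relocated vertices to drop $d^\pm(v, V_i)$ below $\log n/5000$. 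The property \ref{pseudo2} plays no role here; only the degree bounds in \ref{pseudo0} and \ref{pseudo1} are needed.
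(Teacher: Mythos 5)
Your primary route has a genuine gap in the correlation analysis. You assert that, in the fixed-composition model, "the fixed sizes only introduce compatible negative correlations," but the dependence actually runs the wrong way for the lopsided/permutation form of the Local Lemma. Consider two bad events $B_{v,\diamond,1}$ and $B_{u,\diamond',1}$ whose dependency sets $N^\diamond(v)\setminus X$ and $N^{\diamond'}(u)\setminus X$ are disjoint. Conditioning on $\overline{B_{u,\diamond',1}}$ means $N^{\diamond'}(u)\setminus X$ uses up \emph{more} of the fixed quota of $V_1$-slots, which \emph{decreases} the conditional probability that any given vertex of $N^\diamond(v)\setminus X$ lands in $V_1$, hence \emph{increases} $\P(B_{v,\diamond,1}\mid \overline{B_{u,\diamond',1}})$ above $\P(B_{v,\diamond,1})$. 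That is exactly the inequality the negative-dependency graph version of the Lopsided LLL forbids, and since every pair of events with the same target class $V_i$ has this mild positive dependence regardless of whether their dependency sets intersect, the dependency graph you propose is not a valid lopsided dependency graph. (The bad events being pairwise negatively correlated, as you observe, is precisely equivalent to the complements helping the bad events, which is the problematic direction.) Worse, conditioning on $\Theta(n)$ such complements could in principle shift $\P(B_{v,\diamond,1})$ by a polynomial factor, so this is not a harmless $1+o(1)$ perturbation that can be absorbed.

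Your secondary route (sample i.i.d.\ with probabilities $(1/2,1/4,1/4)$ and then rebalance) is closer to the paper's argument, but it still requires a two-sided rebalancing step: the sampled classes can overshoot $\lfloor n/4\rfloor$, forcing you to \emph{remove} vertices from $V_1$ or $V_2$ and hence possibly decrease $d^\pm(v,V_i)$, and your sketch of controlling this via random relocation is not worked out. The paper sidesteps both issues cleanly: it samples i.i.d.\ into $U_0,U_1,U_2$ with probabilities $(3/5,1/5,1/5)$ — deliberately undershooting $n/4$ — and incorporates a single \emph{global} bad event $E_0 = \{|U_1|>n/4 \text{ or } |U_2|>n/4\}$ into the vanilla Local Lemma (Theorem~\ref{genlocal}) with weight $q_0=1/2$. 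When no bad event occurs, the $U_i$ are small enough that $V_i$ can be obtained by \emph{only adding} vertices from $U_0$, which can never decrease any $d^\pm(v,V_i)$. This use of a global size event inside the LLL (rather than fixed-size sampling or two-sided rebalancing) is the key idea your proposal is missing; you are right, though, that only \ref{pseudo0} and \ref{pseudo1} are used and not \ref{pseudo2}.
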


\medskip

\stepcounter{subsubsection}

\noindent{\bfseries \thesubsubsection ~Steps \ref{stepA} and \ref{stepC}.}
The next lemma, Lemma~\ref{lem-connect}, carries out Steps \ref{stepA} and \ref{stepC} in Section~\ref{sec:guide}, and is proved in Sections~\ref{subsec:connectstart} and~\ref{subsec:connectend}. It identifies the set of `bad' vertices $B$ for Step \ref{stepA}, while showing the connectivity property that is then used for Step \ref{stepC}. We state it after formalising how we need `small sets to expand', as follows.

\begin{defn}
An $n$-vertex graph $G$ is a \emph{10-expander} if it is connected and, given any subset $A\subset V(G)$ with $|A|\leq n/20$, we have $|N(A)|\geq 10|A|$.
\end{defn}
%expansion

\begin{lemma}\label{lem-connect}  There is some $n_0$ such that the following holds for each $n\geq n_0$.
Let $V_0\subset [n]$ satisfy $|V_0|\geq n/4$, and let $p=\log n/(10^4n)$ and $G=G(n,p)$. Then, with probability at least $1-\exp(-2n)$, $G$ contains a set $B\subset [n]$ with $1\leq |B|\leq n\log^{[3]}n/\log n$ and the following property.

Suppose we have any $k\geq 1$, and any integers $\ell_i\geq 10\log n/\log^{[2]} n$, $i\in [k]$, such that $\sum_{i\in [k]}\ell_i\leq n/8$, and distinct vertices $x_1,\ldots,x_k,y_1,\ldots,y_k\in V(G)\setminus (V_0\cup B)$. Then, there is a set of vertex-disjoint paths $P_1,\ldots, P_{k}$ in $G$ such that the following hold with $V_1=V_0\setminus(B\cup (\cup_{i\in [k]}V(P_i)))$.
\begin{itemize}
\item For each $i\in [k]$, $P_i$ is an $x_{i},y_{i}$-path in $G$ with length $\ell_i$ and internal vertices in $V_0\setminus B$.
\item For each $A\subset V(G)$ with $V_1\subset A$ and $A\cap B=\emptyset$, $G[A]$ is a 10-expander.
\end{itemize}
\end{lemma}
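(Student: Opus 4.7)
The plan is to expose $G$ and extract a small bad set $B$ absorbing all weak-expansion subsets, then use the resulting strong expansion to construct the paths between the prescribed pairs by breadth-first search.

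\textbf{Defining $B$ and bounding its size.} Fix a threshold $\kappa = \Theta(\log n)$ chosen so that the expected $|N_G(S)|$ for small $S$ comfortably exceeds $\kappa|S|$. Let $B$ be the union of all minimal $S \subset [n]$ with $|S| \leq n/\log n$ and $|N_G(S)| < \kappa|S|$. A union bound over such sets of each size $s$, using the estimate $(1-p)^{s(n - O(s))} \leq \exp(-(1-o(1))spn)$, shows that $|B| \leq n\log^{[3]} n/\log n$ with probability at least $1 - \exp(-3n)$ once the constants are suitably chosen. The $\log^{[3]} n$ factor arises from optimising the size threshold $n/\log n$ against the per-size contribution to $|B|$ across all $s$. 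We can inflate $B$ trivially (by one vertex, if needed) to ensure $|B| \geq 1$.

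\textbf{Strong expansion off $B$ implies $10$-expansion of $G[A]$.} By the construction of $B$, every $A' \subset [n] \setminus B$ with $|A'| \leq n/\log n$ satisfies $|N_G(A')| \geq \kappa|A'|$. Now take any $A \supset V_1$ with $A \cap B = \emptyset$ and any $A' \subset A$ with $|A'| \leq |A|/20$. The excess factor $\kappa/10$ easily absorbs any neighbours of $A'$ that fall outside $A$ (these lie in the relatively small set $([n] \setminus B) \setminus A$, whose size is bounded by the number of vertices outside $V_0$ together with at most $\sum_i\ell_i \leq n/8$ path-interior vertices). This yields $|N_{G[A]}(A')| \geq 10|A'|$, and connectivity of $G[A]$ follows from the strong expansion.

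\textbf{Path construction by BFS.} Process the pairs $(x_i,y_i)$ greedily. For each $i$, run BFS from $x_i$ and from $y_i$ in $G$ restricted to $(V_0 \setminus B) \cup \{x_i,y_i\}$, with internal vertices of previously-constructed paths removed. Each BFS tree grows by factor $\kappa = \Theta(\log n)$ per level off $B$, so after $O(\log n/\log^{[2]} n)$ levels it reaches $n^{\Omega(1)}$ vertices. By the hypothesis $\ell_i \geq 10\log n/\log^{[2]} n$, the two trees therefore meet within $\lfloor \ell_i/2\rfloor$ levels, producing an $(x_i,y_i)$-path of length close to $\ell_i$; a short detour (using further expansion) adjusts the length to exactly $\ell_i$. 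Since $\sum_i\ell_i \leq n/8$ and $|V_0 \setminus B| \geq n/4 - o(n)$, the removal of used vertices does not destroy the expansion needed at later iterations.

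\textbf{Main obstacle.} The tightest technical step is the size bound on $B$ together with the failure probability $\exp(-3n)$: Chernoff-style bounds on vertex degrees alone give only $\exp(-\Omega(\log n))$ tails, far short of what is needed. The required strength comes from a multi-scale union bound enumerating all candidate bad subsets up to size $n/\log n$, combined with the ample slack between the constructed expansion factor $\kappa$ and the target factor $10$. A secondary difficulty is arranging the greedy BFS construction so that all $k$ paths can be built while preserving strong expansion at each stage and keeping the paths internally vertex-disjoint.
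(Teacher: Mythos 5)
The central step of your argument---establishing $|B| \leq n\log^{[3]} n/\log n$ with failure probability $\exp(-3n)$ via ``a union bound over bad subsets of each size $s$''---has a genuine gap. For small $s$, a union bound over bad sets cannot be small: with $p = \log n/10^4 n$, a single vertex has degree below any $\kappa$ of order $\Theta(\log n)$ with probability that is only polynomially small in $n$ (the lower tail of a binomial with mean $\log n/10^4$), so there are typically many bad singletons. A union bound over such events bounds the probability that some fixed bad set exists, but it says nothing about the total size of the union of all minimal bad sets, and that union has non-negligible expected size. Additionally, the union of overlapping minimal bad sets is not itself necessarily bad, so you cannot conclude that if $B$ is large then one specific large bad set exists. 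The paper (Lemma~\ref{findexpander}, via Proposition~\ref{connsets}) sidesteps this by taking $B$ to be a \emph{largest} set with $|B|\leq 3m$ and $|N(B,V_0)|\leq 2d|B|$, where $d=(\log n)^{1/3}$, and then arguing that $|B|$ must be smaller than $n\log^{[3]} n/(2\log n)$ because otherwise a set of exactly that size would have $|N(\cdot)|<9n/10$---a much weaker demand than $\kappa$-expansion, whose per-set failure probability is $\exp(-\Omega(n\log^{[3]} n))$, comfortably beating the $4^n$ from the union bound. The maximality of $B$ (rather than taking a union of minimal bad sets) is what gives the clean ``all sets disjoint from $B$ expand'' conclusion without needing to control the probability of small bad sets existing at all.

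Two secondary points: first, the paper's working expansion factor is $(\log n)^{1/3}$, not $\Theta(\log n)$; with $p = \log n/10^4 n$ the mean degree is only $\log n/10^4$, so $\kappa = c\log n$ works only if $c$ is well below $1/10^4$ and even then leaves little slack. Second, your BFS approach to building paths of \emph{exactly} length $\ell_i$ and keeping them internally disjoint while preserving expansion is hand-waved (``a short detour adjusts the length''); the paper replaces this with the $(d,m)$-extendability framework (Lemma~\ref{connectcor}), which directly produces a path of the prescribed length, certifies that the augmented subgraph remains extendable, and then the $10$-expansion of $G[A]$ is deduced from extendability rather than re-derived from $\kappa$-expansion. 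Your observation that sets disjoint from $B$ expand well, and that this implies $10$-expansion of $G[A]$ once neighbours outside $A$ are absorbed, is sound and essentially matches the paper's final step; it is the size bound on $B$ and the exact-length path construction where the proposal needs the ideas above.
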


\medskip

\stepcounter{subsubsection}

\noindent{\bfseries \thesubsubsection ~Step \ref{stepB}.}
The next lemma, Lemma~\ref{lem-cover}, carries out Step \ref{stepB}, and is proved in Section~\ref{subsec:cover}. The lemma will be applied to the exceptional set $X$ of the pseudorandom digraph $D_0$ and the set of `bad' vertices $B$ from Lemma~\ref{lem-connect} (split as $B=B^+\cup B^-$). We use it to embed paths from the cycle to cover $X\cup B$, with vertices in $X$ embedded to prespecified vertices, so that these paths have endvertices outside of $X\cup B$ (so that their endvertices are `good' vertices).

\begin{lemma}\label{lem-cover} There is some $n_0$ such that the following holds for each $n\geq n_0$. Let $D$ be an $n$-vertex pseudorandom digraph with exceptional set $X$. Let $B^+,B^-, A^+,A^-$ be disjoint sets in $V(D)\setminus X$, and suppose that
\begin{itemize}
\item for each $v\in V(D)$ and $\diamond\in \{+,-\}$, $d^\diamond(v,B^\diamond\cup A^\diamond)\geq \log n/5000$, and
\item $|X|,|B^+|,|B^-|\leq n\lg{3}/\log n$.
\end{itemize}

Let $B=B^+\cup B^-$. Let $k=|X\cup B|$, and let $\{P_j:j\in [k]\}$ be a collection of vertex-disjoint oriented paths, each with length $2\lceil 4\log n/\log\log n\rceil$. Let $x_j$ be the midpoint of $P_j$ for each $j\in [k]$, and let $f:X\cup B\to [k]$ be a bijection.

Then, there is some $\bar{B}\subset B$ and a collection of vertex-disjoint oriented paths $\{Q_v:v\in X\cup \bar{B}\}$ such that,
\stepcounter{propcounter}
\begin{enumerate}[label = {\bfseries \emph{\Alph{propcounter}\arabic{enumi}}}]
  \item for each $v\in X\cup \bar{B}$, $Q_v$ is a copy in $D$ of a portion of $P_{f(v)}$ with endvertices in $A^+\cup A^-$ and interior vertices in $X\cup B$, in which $x_{f(v)}$ is copied to $v$, and \label{new1}
  \item $X\cup B$ is contained in $\cup_{v\in X\cup \bar{B}}V(Q_v)$.\label{new2}
\end{enumerate}
\end{lemma}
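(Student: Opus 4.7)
My plan is to build the paths $Q_v$ one at a time in a greedy fashion, processing the vertices of $X\cup B$ in the order: first the vertices of $X$, then those of $B$ not already covered by a previously built path. For each such $v$, I would embed a portion of $P_{f(v)}$ containing its midpoint $x_{f(v)}$ into $D$, with $x_{f(v)}$ copied to $v$ and interior in $X\cup B$; the set $\bar B$ is then exactly the subset of $B$ for which a path is actually built, which immediately yields property \ref{new2}, while vertex-disjointness is forced by always avoiding vertices already used.

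\textbf{Building a single $Q_v$.} To construct one $Q_v$, I would extend from $v$ independently in each of the two directions along $P_{f(v)}$. At each step, from the current endvertex $u$ of the partial embedding and with the next edge of $P_{f(v)}$ having orientation $\diamond\in\{+,-\}$, the next vertex $w$ would be chosen from $N^\diamond_D(u)\cap (B^\diamond\cup A^\diamond)\setminus U$, where $U$ is the set of all vertices used by previously built paths together with the current partial embedding. The extension on a given side terminates as soon as $w$ lands in $A^\diamond$; otherwise $w\in B^\diamond$ is an interior vertex and the extension continues. Success requires reaching $A^\diamond$ within $\lceil 4\log n/\log\log n\rceil$ steps.

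\textbf{Expansion via BFS.} To show each such extension succeeds, I would replace the single-step greedy choice with a BFS layering. Setting $S_0=\{v\}$ and $S_{i+1}=N^{\diamond_{i+1}}_D(S_i)\cap (B^{\diamond_{i+1}}\cup A^{\diamond_{i+1}})\setminus (U\cup S_0\cup\cdots\cup S_i)$, where $\diamond_{i+1}$ is the orientation of the $(i+1)$-th edge of $P_{f(v)}$, the hypothesis $d^\diamond(u,B^\diamond\cup A^\diamond)\geq \log n/5000\gg (\log n)^{2/3}$ combined with the pseudorandom expansion \ref{pseudo2} gives, while $|S_i|\leq n\log\log n/\log n$ and most of $S_i$ is not ``bad'' (i.e.\ does not have more than $(\log n)^{2/3}$ in- or out-neighbours inside $U$), that $|S_{i+1}|\geq |S_i|(\log n)^{1/3}$. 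After $O(\log n/\log\log n)$ steps the size of $S_i$ exceeds $|B^{\diamond_i}|\leq n\log^{[3]}n/\log n$, forcing $S_i\cap A^{\diamond_i}\neq\emptyset$, and tracing back through the BFS yields the required side of $Q_v$.

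\textbf{Main obstacle.} The crux is to maintain this expansion throughout the whole construction. Since at most $|X|+|B|\leq 3n\log^{[3]}n/\log n$ paths are built, each of length $O(\log n/\log\log n)$, the used set satisfies $|U|=O(n\log^{[3]}n/\log\log n)$ at all times. A double-count using $\Delta^\pm(D)\leq 100\log n$ from \ref{pseudo0} together with iterated applications of \ref{pseudo2} should then show that the set of bad vertices remains sublinear, so within each BFS layer a good representative can be found from which to continue the expansion. Carefully extracting such a representative at every one of the $O(\log n/\log\log n)$ consecutive layers, in the face of a used set $U$ that grows across the whole construction, is the main delicate point; once it is done, the existence of the paths $Q_v$ for all processed $v$, and hence properties \ref{new1} and \ref{new2}, follow immediately.
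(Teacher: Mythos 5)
Your BFS plan contains a genuine gap at exactly the point you flag as the main delicate step. Throughout the construction one does have $|U|=O(n\log^{[3]}n/\log n)$, since interior vertices of the paths lie in $X\cup B$, whose size is at most $3n\log^{[3]}n/\log n$. Call a vertex $u$ \emph{bad} if $d^\diamond(u,U)\geq(\log n)^{2/3}$ for some $\diamond$. The naive double-count via $\Delta^\pm(D)\leq 100\log n$ bounds the bad set by $O((\log n)^{1/3}|U|)=O(n\log^{[3]}n/(\log n)^{2/3})$, which, while sublinear, is a $(\log n)^{1/3}$-factor \emph{larger} than the upper bound $n\log^{[3]}n/\log n$ on $|B^\diamond|$, so it gives nothing. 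The sharpest estimate available comes from applying \ref{pseudo2} to the bad set and $U$, which gives $O(|U|/(\log n)^{1/3})=O(n\log^{[3]}n/(\log n)^{4/3})$, a $(\log n)^{1/3}$-factor \emph{smaller} than that upper bound on $|B^\diamond|$. But the actual $|B^\diamond|$ may be far below its upper bound, and nothing in the pseudorandomness axioms prevents the bad set from containing $B^\diamond$ entirely. Since your BFS layers live inside $B^\diamond$, the claim ``most of $S_i$ is not bad'' need not hold for any $i$. The failure is already visible at the root: $|S_1|=\Theta(\log n)$, so a handful of previously built paths running through $N^{\diamond_1}_D(v)\cap(B^{\diamond_1}\cup A^{\diamond_1})$ extinguishes the BFS from $v$ at birth. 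Some additional structure is needed beyond \ref{pseudo0}--\ref{pseudo2} plus degree bookkeeping.

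The paper circumvents the used-vertex question altogether, and its proof is materially different from your plan. It first builds a nested hierarchy $\emptyset=B_0^\diamond\subset B_1^\diamond\subset\cdots\subset B_r^\diamond=B^\diamond$, where $B_i^\diamond$ consists of the $v\in B^\diamond$ with at least $\log n/10^4$ many $+$-edges into $B_{i-1}^+\cup A^+$ and at least $\log n/10^4$ many $-$-edges into $B_{i-1}^-\cup A^-$; property~\ref{pseudo2} shows $|B\setminus B_i|$ decays by a $(\log n)^{1/3}$-factor per level, so $r=O(\log n/\log\log n)$. Then an auxiliary bipartite graph $H^\diamond$ of ``hierarchy-descending'' edges is shown (again by \ref{pseudo2}) to satisfy a doubled Hall condition, yielding two injections $g_1^\diamond,g_2^\diamond$ whose values are globally distinct. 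Each $Q_v$ is grown greedily along those escape routes, strictly decreasing the hierarchy level at every step, so it terminates in $A^+\cup A^-$ within $r$ steps. Crucially, the $Q_v$ are built \emph{without} avoiding one another; disjointness is recovered afterwards from the injectivity of the $g$'s and the monotone decrease of the hierarchy level along any $Q_v$, discarding redundant paths to form $\bar B$. The reserved escape routes and the descending hierarchy are precisely the extra ideas your approach is missing.
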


\medskip

\stepcounter{subsubsection}

\noindent{\bfseries \thesubsubsection ~Step \ref{stepD}.}
Finally, the following lemma is used for Step \ref{stepD} and is proved in Section~\ref{subsec:posa}.

\begin{lemma}\label{lem-posa} There is some $n_0$ such that the following holds for each $n\geq n_0$.
Let $G_0$ be a 10-expander with vertex set $[n]$ and let $x,y\in V(G_0)$ be distinct. Let $p=\log n/(10^5n)$ and $G_1=G(n,p)$.
Then, with probability at least $1-\exp(-4n)$, $G_0\cup G_1$ contains a Hamilton $x,y$-path.
\end{lemma}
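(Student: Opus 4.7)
I would follow the classical P\'osa rotation--extension framework, using the strong 10-expansion of $G_0$ to guarantee a large pool of \emph{boosters} at every stage, and then absorbing the randomness of $G_1$ through an adaptive exposure so that each extension step succeeds with probability $1-\exp(-\Omega(n\log n))$.

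The first ingredient is a booster lemma: for any supergraph $G \supseteq G_0$ on $[n]$ and any $x,y$-path $P$ in $G$ which is not already a Hamilton $x,y$-path, the set $\mathcal{B}(P) \subseteq \binom{[n]}{2} \setminus E(G)$ of non-edges whose addition to $G$ produces an $x,y$-path on strictly more vertices (if $|V(P)| < n$) or a Hamilton $x,y$-path (if $|V(P)| = n$) has size at least $n^2/C_0$ for some absolute constant $C_0$. The proof uses only $G_0$-edges for rotations: rotating from the $y$-end while fixing $x$ produces a set $R_x$ of alternative endpoints that is closed under all available rotations; were $R_x$ small, the 10-expansion of $G_0$ would force its $G_0$-neighborhood to be simultaneously confined to a small part of $V(P)$ and to have size at least $10|R_x|$, a contradiction, so $|R_x| \geq n/20$. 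Rerunning this procedure on the rotated path from each $t \in R_x$ with $t$ fixed gives sets $R_t$ of size $\geq n/20$, and pairs $\{t,u\}$ with $t \in R_x$ and $u \in R_t$ either outside $V(P)$ (first case) or mapping to a $P$-neighbor of $y$ under the associated rotated path (second case) each constitute boosters, giving $\geq n^2/C_0$ in total.

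Given the booster lemma, I expose $G_1$ adaptively. Starting from any short $x,y$-path in $G_0$ (which exists by the connectivity of the 10-expander), I iteratively extend: at step $i$, given a non-Hamilton $x,y$-path $P_i$, I apply the booster lemma to obtain $\mathcal{B}(P_i)$ and query its edges in a fixed order for membership in $G_1$, stopping at the first ``yes'' and using that edge to form $P_{i+1}$ with $|V(P_{i+1})| > |V(P_i)|$ (or, in the Hamiltonian-wrong-endpoint case, to shift the second endpoint closer to $y$). By the principle of deferred decisions, each previously unqueried edge is independently in $G_1$ with probability $p$. A short concentration argument bounds the total queries across all $\leq 2n$ steps by $O(n^2/\log n)$ with probability $1 - \exp(-\Omega(n))$, so at every step at least $|\mathcal{B}(P_i)|/2 \geq n^2/(2C_0)$ edges of $\mathcal{B}(P_i)$ remain fresh. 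The conditional probability that step $i$ fails is then at most $(1-p)^{n^2/(2C_0)} \leq \exp(-n\log n/(2C_0 \cdot 10^5))$, and a union bound over the $\leq 2n$ steps yields a total failure probability of at most $\exp(-4n)$ for all sufficiently large $n$.

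The hard part is the quantitative booster lemma: producing an absolute constant $C_0$ that works uniformly over all supergraphs $G$ and all paths $P$, and unifying the extension stage (which grows $V(P)$) with the wrong-endpoint stage (where $V(P) = V(G)$ but the free endpoint is not $y$) within the same booster framework. A secondary subtlety is the adaptive exposure bookkeeping: one must verify that the total number of queries is $o(n^2)$ with sufficient probability so that a linear-in-$n^2$ fraction of $\mathcal{B}(P_i)$ remains fresh at each step. The rare event that bookkeeping fails is simply absorbed into the $\exp(-4n)$ failure budget.
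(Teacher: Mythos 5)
Your high-level plan (P\'osa rotation--extension, booster counting via expansion, then absorbing $G_1$ by sprinkling) matches the paper's, but your booster lemma is wrongly formulated and the rotation argument you sketch does not establish it. You define $\mathcal{B}(P)$ as the non-edges whose addition to $G$ yields a longer \emph{$x,y$-path}, and you try to justify $|\mathcal{B}(P)|\geq n^2/C_0$ by P\'osa rotations that fix only the endpoint $x$. But such rotations turn the $x,y$-path $P$ into $x,t$-paths (same vertex set) for various $t\in R_x$, and adding an edge $tz$ with $z\notin V(P)$ produces a longer $x,z$-path, not a longer $x,y$-path; similarly, the double rotation pairs $\{t,u\}$ close a cycle on $V(P)$ which, when broken open at an outside-attaching vertex, yields a longer path whose endpoints are in general neither $x$ nor $y$. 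So these edges are not members of $\mathcal{B}(P)$ as you defined it, and your ``Hamiltonian-wrong-endpoint case'' acknowledges that you do leave the class of $x,y$-paths without explaining how the same booster count applies there. (The ``if $|V(P)|=n$'' branch of the lemma is also vacuous as stated, since an $x,y$-path on all $n$ vertices is already Hamiltonian.)

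The paper sidesteps this exactly by not tracking a specific $x,y$-path at all. It defines an \emph{$e$-booster} for $G$ as an edge $f$ such that $G+e+f$ has a longer path \emph{containing the edge $e$} than $G+e$ does (or a Hamilton cycle through $e$), and proves (Lemma~\ref{rotatelemma}) that a 10-expander has at least $n^2/10^4$ $e$-boosters. The rotations used there fix both an endpoint \emph{and the edge $e=xy$} (Definition: ``rotate $Q$ with $u_0$ and $e$ fixed''), so every path and cycle encountered contains $e$; the eventual Hamilton cycle through $xy$ then yields the Hamilton $x,y$-path by removing $xy$. This resolves the endpoint bookkeeping that your formulation struggles with. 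Your adaptive exposure scheme (deferred decisions plus concentration of the query count) is a genuine alternative to the paper's sprinkling argument (which instead adds a fixed budget of $G_1$-edges one by one, tracks the indicator $X_i$ of ``$e_i$ is a booster for $H_{i-1}$'', and applies Azuma to the associated submartingale), and either could be made to work with the correct booster lemma; but as written, your argument rests on a false claim about what rotations fixing a single endpoint produce.
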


\subsection{Proof of Theorem~\ref{mainthm}}\label{sec:mainthmproof}
We now put these component parts together to prove Theorem~\ref{mainthm}, as follows.

\ifproofdone

\bigskip

\noindent\textbf{Proof of Theorem~\ref{mainthm} from the component parts is complete.}
\else

\begin{proof}[Proof of Theorem~\ref{mainthm}] Let $n_0$ be sufficiently large that each property in Lemmas~\ref{lem-div},~\ref{lem-connect},~\ref{lem-cover} and~\ref{lem-posa} holds for each $n\geq n_0/2$, and further simple inequalities involving $n\geq n_0$ hold as used below. We will show the property in Theorem~\ref{mainthm} holds for each $n\geq n_0$. For this, let $D_0$ be an $n$-vertex pseudorandom digraph with exceptional set $X\subset V(D_0)=[n]$
such that $|X|\leq n^{3/4}$. Let $C$ be an $n$-vertex oriented cycle. Let $P\subset C$ be a path with length at most $n/100$. Let $f:X\to V(P)$ be an injection, so that the vertices in $f(X)$ are pairwise at least $100\log n/\log\log n$ apart on $P$.

Let $\cH$ be the set of copies of $C$ with vertex set $[n]$ in which $f(x)$ is copied to $x$ for each $x\in X$, and let $p=\log n/(10^4n)$. Independently, let $D^\ast_1$, $D^\ast_2$, $D_1$, and $D^\ast$ be distributed as $D^\ast(n,p)$, $D^\ast(n,p)$, $D(n,10p)$, and $D^*(n,10p)$ respectively. Noting that each pair of edges $\{{uv},{vu}\}$ appears in $D^\ast_1\cup D^\ast_2$ independently at random with probability $1-(1-p)^2\leq 10p$,
Theorem~\ref{thm-couplingarg} implies that
\[
\P(\exists H\in \cH:H\subset D_0\cup D_1)\geq \P(\exists H\in \cH:H\subset D_0\cup D^\ast)\geq \P(\exists H\in \cH:H\subset D_0\cup D^\ast_1\cup D^\ast_2).
\]
Thus, to prove Theorem~\ref{mainthm}, it is sufficient to show that $\P(\exists H\in \cH:H\subset D_0\cup D^\ast_1\cup D^\ast_2)\geq 1-2\exp(-2n)$.

First, using the property from Lemma~\ref{lem-div}, find a partition $V(D_0)\setminus X=V_0\cup V_1\cup V_2$ with $|V_1|=|V_2|=\lfloor n/4\rfloor$ such that, for each $v\in V(D_0)$ and $i\in [2]$, we have $d^\pm(v,V_i)\geq \log n/5000$. Note that $|V_0|\geq n/2-|X|\geq n/4$.

Let $G_1$ be the underlying undirected graph of $D^*_1$, noting that $G_1$ has the same distribution as $G(n,p)$. Due to the choice of $n_0$, we have the property (originating from Lemma~\ref{lem-connect}) that, with probability at least $1-\exp(-2n)$, there exists a set $B\subset V(G_1)$ with $1\leq |B|\leq n\log^{[3]}n/\log n$ such that the following holds.
\stepcounter{propcounter}
\begin{enumerate}[label = \textbf{\Alph{propcounter}\arabic{enumi}}]
\item For any $k\geq 1$, and any integers $\ell_i\geq 10\log n/\log^{[2]}n$, $i\in [k]$, such that $\sum_{i\in [k]}\ell_i\leq n/8$, and distinct vertices $x_1,\ldots,x_k,y_1,\ldots,y_k\in V(G_1)\setminus (V_0\cup B)$, there is a set of internally vertex-disjoint paths
$R_1,\ldots, R_{k}$ such that the following hold with $\bar{V}_0=V_0
\setminus (B\cup (\cup_{i\in [k]}V(R_i)))$.\label{connectprop}
\begin{itemize}
\item For each $i\in [k]$, $R_i$ is an $x_{i},y_{i}$-path in $G_1$ with length $\ell_i$ and internal vertices in $V_0$.
\item For each $A\subset V(G_1)$ with $\bar{V}_0\subset A$ and $A\cap B=\emptyset$, $G_1[A]$ is a 10-expander.
\end{itemize}
\end{enumerate}
Now, let $k'=|X\cup B|\geq 1$ and note that $k'\leq 2n\log^{[3]}n/\log n$. Let $A^+=V_1\setminus B$, $A^-=V_2\setminus B$, $B^+=B\cap V_1$ and $B^-=B\cap V_2$. Let $\ell=2\lceil 4\log n/\log^{[2]} n\rceil$
 and let $\ell_0=10\log n/\log^{[2]} n$. Let $P'$ be the subpath of $C$ containing $P$ which contains $\ell$ extra vertices on each side.
Take paths $P_i$, $i\in [k']$, in $P'$ which are pairwise a distance at least $\ell_0$ apart in $C$, which each have length $2\ell$, and such that, for each $x\in X$, there is some $j\in [k']$ for which $f(x)$ is the center vertex of $P_{j}$. Note that this is possible as $k'(2\ell+2\ell_0)=o(n)$.

Due to the choice of $n_0$, we have the property (originating from Lemma~\ref{lem-cover}) that there is some $k\leq k'$ and a set of vertex-disjoint paths $Q_i$, $i\in [k]$, in $D_0$, such that the following hold.
\begin{enumerate}[label = \textbf{\Alph{propcounter}\arabic{enumi}}]\stepcounter{enumi}
\item For each $i\in [k]$, $Q_i$ is a copy of a portion of $P_i$ with endvertices in $A^+\cup A^-$ and interior vertices in $X\cup B$.\label{B2}
\item $X\cup B\subset \cup_{i\in [k]}V(Q_i)$.\label{B3}
\item For each $x\in X$, for some $j\in [k]$, $Q_{j}$ is the copy of a portion of $P'$ containing $x$ in which $f(x)$ is copied to $x$.\label{B4}
\end{enumerate}
Note that we can assume, by deleting paths if necessary, that each path $Q_i$, $i\in [k]$, contains some vertex in $X\cup B$, and hence, by \ref{B2}, has length at least 2. As $k'=|X\cup B|\geq 1$, we have $k\geq 1$.

Pick an arbitrary clockwise direction on $C$. Relabelling, if necessary, assume that the paths $P_1,\ldots,P_k$ occur on $C$ in clockwise order. For each $i\in [k-1]$, let $\ell_i\geq \ell_0$ be the length of the path between the preimage of $Q_i$ and the preimage of $Q_{i+1}$ on $P$. For each $i\in [k]$, label the endvertices of $Q_i$ so that $Q_i$ is an $x_i,y_i$-path and the preimage of $x_i$ occurs earlier in $P_i$ than the preimage of $y_i$ under the clockwise order.
Note that, by the choice of the paths $P_i$, $i\in [k]$, we have that $\sum_{i\in [k]}\ell_i\leq |P|+2\ell\leq n/8$ and, for each $i\in [k-1]$, $\ell_i\geq \ell_0=10\log n/\log^{[2]}n$. Furthermore, the vertices $x_1,\ldots,x_k,y_1,\ldots,y_k$ are all distinct as they are endvertices of vertex-disjoint paths with length at least 2, and, by~\ref{B2}, these vertices are all in $A^+\cup A^-=(V_1\cup V_2)\setminus B=V(D_0)\setminus (V_0\cup B)$.

By \ref{connectprop}, we can find paths $R_i$, $i\in [k-1]$, which are internally vertex-disjoint such that, for each $i\in [k-1]$, $R_i$ is a $y_i,x_{i+1}$-path in $G_1$ with length $\ell_i$ and internal vertices in $V_0$, and such that, setting $\bar{V}_0=V_0\setminus (B\cup (\cup_{i\in[k-1]}V(R_i)))$, the following holds.

\begin{enumerate}[label = \textbf{\Alph{propcounter}\arabic{enumi}}]\addtocounter{enumi}{4}
\item For each $A\subset V(G_1)$ with $\bar{V}_0\subset A$ and $A\cap B=\emptyset$, $G_1[A]$ is a 10-expander.\label{propexp}
\end{enumerate}

Now, for each $R_i$, $i\in [k-1]$, let $R'_i$ be the digraph formed by replacing each edge $uv$ of $R_i$ by both ${uv}$ and ${vu}$. Observe that $(\cup_{i\in [k]}{Q_i})\cup (\cup_{i\in [k-1]}R'_i)\subset D_0\cup D_1^\ast$ contains an oriented $x_1,y_k$-path, $Q$ say, of a portion of $P'$, $P''$ say, in which the following hold.
\begin{itemize}
  \item By \ref{B2}, the paths $Q_i$, $i\in [k]$, have vertices in $A^+\cup A^-\cup (X\cup B)=V_1\cup V_2\cup X\cup B$, and hence, by the definition of $\bar{V}_0$, we have $\bar{V}_0\subset V(G_1)\setminus V(Q)$.
  \item By \ref{B3}, we have $X\cup B\subset V(Q)$, and, hence, by \ref{propexp}, $G_1':=G_1-(V(Q)\setminus \{x_1,y_k\})$ is a 10-expander.
  \item By \ref{B4}, for each $x\in X$, $f(x)\in V(P'')$ is copied to $x\in V(Q)$.
  \end{itemize}

Let $m=|G_1'|$, and note that $m\geq n-|Q|\geq n-|P''|\geq  n/2\geq n_0/2$. Let $G_2$ be the underlying graph of $D^\ast_2[V(G'_1)]$ and note that $G_2$ has the same distribution as $G(m,p)$. Then, due to the choice of $n_0$, we have the property (originating from Lemma~\ref{lem-posa}) that, with probability at least $1-\exp(-4m)$, $G_1'\cup G_2$
contains a Hamilton $y_k,x_1$-path, $S$ say. Let $S^*$ be $S$ with each edge replaced by a directed edge in each direction, and note that, as $S\subset G_1'\cup G_2$, we have that $S^*\subset D^*_1\cup D^*_2$. Finally, note that a copy of $C$ lies in $Q\cup S^\ast\subset D_0\cup D^*_1\cup D^*_2$, in which $f(x)$ is copied to $x$ for each $x\in X$. Therefore, in total, we have found a copy of $C$ in $D_0\cup D^\ast_1\cup D^\ast_2$
in which $f(x)$ is copied to $x$ for each $x\in X$ with probability at least $1-\exp(-2n)-\exp(-4m)\geq 1-2\exp(-2n)$, as required.
\end{proof}
\fi

%%%%%%%%%%%%%%%%%%%%%%%%%%%%%%%%%%%%%%%%%%%%%%%%%%%%%%%%%%%%%%%%%%%%%%%%%%%%%%%%%%%%%%%%%%%%%%%%%%%%%%%%%%%%%%%%
%%%%%%%%%%%%%%%%%%%%%%%%%%%%%%%%%%%%%%%%%%%%%%%%%%%%%%%%%%%%%%%%%%%%%%%%%%%%%%%%%%%%%%%%%%%%%%%%%%%%%%%%%%%%%%%%
%%%%%%%%%%%%%%%%%%%%%%%%%%%%%%%%%%%%%%%%%%%%%%%%%%%%%%%%%%%%%%%%%%%%%%%%%%%%%%%%%%%%%%%%%%%%%%%%%%%%%%%%%%%%%%%%
%%%%%%%%%%%%%%%%%%%%%%%%%%%%%%%%%%%%%%%%%%%%%%%%%%%%%%%%%%%%%%%%%%%%%%%%%%%%%%%%%%%%%%%%%%%%%%%%%%%%%%%%%%%%%%%%
%%%%%%%%%%%%%%%%%%%%%%%%%%%%%%%%%%%%%%%%%%%%%%%%%%%%%%%%%%%%%%%%%%%%%%%%%%%%%%%%%%%%%%%%%%%%%%%%%%%%%%%%%%%%%%%%
%%%%%%%%%%%%%%%%%%%%%%%%%%%%%%%%%%%%%%%%%%%%%%%%%%%%%%%%%%%%%%%%%%%%%%%%%%%%%%%%%%%%%%%%%%%%%%%%%%%%%%%%%%%%%%%%
%%%%%%%%%%%%%%%%%%%%%%%%%%%%%%%%%%%%%%%%%%%%%%%%%%%%%%%%%%%%%%%%%%%%%%%%%%%%%%%%%%%%%%%%%%%%%%%%%%%%%%%%%%%%%%%%
%%%%%%%%%%%%%%%%%%%%%%%%%%%%%%%%%%%%%%%%%%%%%%%%%%%%%%%%%%%%%%%%%%%%%%%%%%%%%%%%%%%%%%%%%%%%%%%%%%%%%%%%%%%%%%%%
%%%%%%%%%%%%%%%%%%%%%%%%%%%%%%%%%%%%%%%%%%%%%%%%%%%%%%%%%%%%%%%%%%%%%%%%%%%%%%%%%%%%%%%%%%%%%%%%%%%%%%%%%%%%%%%%
%%%%%%%%%%%%%%%%%%%%%%%%%%%%%%%%%%%%%%%%%%%%%%%%%%%%%%%%%%%%%%%%%%%%%%%%%%%%%%%%%%%%%%%%%%%%%%%%%%%%%%%%%%%%%%%%
%%%%%%%%%%%%%%%%%%%%%%%%%%%%%%%%%%%%%%%%%%%%%%%%%%%%%%%%%%%%%%%%%%%%%%%%%%%%%%%%%%%%%%%%%%%%%%%%%%%%%%%%%%%%%%%%
%%%%%%%%%%%%%%%%%%%%%%%%%%%%%%%%%%%%%%%%%%%%%%%%%%%%%%%%%%%%%%%%%%%%%%%%%%%%%%%%%%%%%%%%%%%%%%%%%%%%%%%%%%%%%%%%
%%%%%%%%%%%%%%%%%%%%%%%%%%%%%%%%%%%%%%%%%%%%%%%%%%%%%%%%%%%%%%%%%%%%%%%%%%%%%%%%%%%%%%%%%%%%%%%%%%%%%%%%%%%%%%%%
%%%%%%%%%%%%%%%%%%%%%%%%%%%%%%%%%%%%%%%%%%%%%%%%%%%%%%%%%%%%%%%%%%%%%%%%%%%%%%%%%%%%%%%%%%%%%%%%%%%%%%%%%%%%%%%%

\subsection{Coupling argument}\label{subsec:mcd}
To recap, for Theorem~\ref{thm-couplingarg}, we have the following situation. We have $p\in [0,1]$, $n\in \N$, and a set $\cH$ of oriented graphs with vertex set $[n]$. We have a digraph $D_0$ with vertex set $[n]$, and random digraphs $D_1=D(n,p)$ and $D^\ast_1=D^\ast(n,p)$. We want to show that
 \[
\P(\exists H\in\cH:H\subset D_0\cup D_1)\geq \P(\exists H\in \cH:H\subset D_0\cup D_1^\ast).
 \]

To do this, we follow closely the approach of McDiarmid~\cite{McD83}. We construct a sequence of random digraphs, denoted $\widehat{D}_0,\widehat{D}_1,\ldots,\widehat{D}_{n(n-1)/2}$, beginning with the random digraph $\widehat{D}_0$ which has same distribution as $D^\ast_1$. Given an arbitrary order of the $n(n-1)/2$ vertex pairs from $[n]$, throughout this sequence of digraphs we steadily decouple each pair of edges ${uv},{vu}$ in $D^\ast_1$ so that more of these edge pairs appear independently of each other.
At the end of this sequence, we have decoupled the appearance of edge pairs in $D^\ast_1$ until the last digraph in the sequence, $\widehat{D}_{n(n-1)/2}$, has the same distribution as $D_1$. Once this sequence of random digraphs is constructed, we show that, as $i$ increases, some digraph in $\cH$ is increasingly likely to appear in $D_0\cup D^\ast_i$ (see Claim~\ref{simpleclaim}, below). The only change from the proof used by McDiarmid~\cite{McD83} is the introduction of a fixed graph, $D_0$, whose edges are added to every random graph in this sequence, but this introduces no additional complication.
\ifproofdone

\smallskip

\noindent\textbf{Proof of Theorem~\ref{thm-couplingarg} is complete.}
\else
%Recall that $D^\ast(n,p)$ is the random digraph with vertex set $[n]$ where each pair of edges ${uv}$ and ${vu}$ are included together with probability $p$, and otherwise excluded.
%We remark that the novelty here compared to the coupling used by McDiarmid~\cite{McD83} is only the introduction of a fixed graph $D'$. Otherwise, the proof follows identically to~\cite{McD83}.

\begin{proof}[Proof of Theorem~\ref{thm-couplingarg}]
Let $\ell=n(n-1)/2$ and enumerate $[n]^{(2)}$ as $e_1=\{x_1,y_1\},\ldots,e_\ell=\{x_\ell,y_\ell\}$. For each $0\leq i\leq \ell$, let $X_i$, $Y_i$ and $Z_i$ be independent Bernoulli random variables which are 1 with probability $p$, and 0 otherwise. For each $0\leq j\leq \ell$, let $\widehat{D}_j$ be the random digraph with vertex set $[n]$ and edge set
\[
\{{x_iy_i}:1\leq i\leq j\text{ and }X_i=1\}\cup \{{y_ix_i}:1\leq i\leq j\text{ and }Y_i=1\}\cup \{{x_iy_i},{y_ix_i}: j<i\leq \ell\text{ and }Z_i=1\}.
\]
Note the following.\stepcounter{propcounter}
\begin{enumerate}[label = \textbf{{\Alph{propcounter}\arabic{enumi}}}]
\item $\widehat{D}_0$ has the same distribution as $D^\ast(n,p)$, and hence $D^\ast_1$.\label{bitwarm1}
\item $\widehat{D}_\ell$ has the same distribution as $D(n,p)$, and hence $D_1$. \label{littlewarm}
\item For each $j\in [\ell]$, $E(\widehat{D}_{j-1})\triangle E(\widehat{D}_j)\subset \{{x_jy_j},{y_jx_j}\}$.\label{bitwarm2}
\end{enumerate}
We will show the following claim.

\begin{claim}\label{simpleclaim}
For each $i\in [\ell]$, $\P(\exists H\in \cH: H\subset D_0\cup \widehat{D}_{i})\geq \P(\exists H\in \cH: H\subset D_0\cup \widehat{D}_{i-1})$.
\end{claim}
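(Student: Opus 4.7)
The plan is to condition on all the randomness determining $\hat{D}_{i-1}$ and $\hat{D}_i$ except for the three variables $X_i, Y_i, Z_i$ that govern how the pair of potential edges ${x_iy_i},{y_ix_i}$ appears. Once this is done, $\hat{D}_{i-1}$ and $\hat{D}_i$ agree on a common random digraph $D'$ (depending only on the conditioned variables), and differ only in how the two edges between $x_i$ and $y_i$ are added on top of $D_0 \cup D'$. In $\hat{D}_{i-1}$ these two edges are \emph{coupled} (both present with probability $p$, both absent otherwise), while in $\hat{D}_i$ they are \emph{independent} (each present with probability $p$). Thus, after taking total expectations, it suffices to prove the claim pointwise in the value of $D'$, i.e.\ for each fixed graph $D'$.

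Fix $D'$ and, for $(a,b)\in \{0,1\}^2$, let $A_{ab}$ be the event that some $H\in \cH$ is contained in $D_0\cup D'$ together with the edge ${x_iy_i}$ if $a=1$ and the edge ${y_ix_i}$ if $b=1$. Clearly $A_{00}\subset A_{10}\cap A_{01}$ and $A_{10}\cup A_{01}\subset A_{11}$. The key observation is that $A_{11}\subset A_{10}\cup A_{01}$: since each $H\in\cH$ is an \emph{oriented} graph, it uses at most one of the two edges ${x_iy_i},{y_ix_i}$, so any witness $H\subset D_0\cup D'\cup\{{x_iy_i},{y_ix_i}\}$ is already contained in $D_0\cup D'$ together with just one of these edges. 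Combining, $A_{11}=A_{10}\cup A_{01}$ and therefore
\[
\mathbf{1}_{A_{11}}=\mathbf{1}_{A_{10}}+\mathbf{1}_{A_{01}}-\mathbf{1}_{A_{10}\cap A_{01}}\leq \mathbf{1}_{A_{10}}+\mathbf{1}_{A_{01}}-\mathbf{1}_{A_{00}}.
\]

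Now we compute the conditional probabilities directly from the distributions of $(X_i,Y_i,Z_i)$. Conditional on $D'$,
\[
\P\bigl(\exists H\in \cH: H\subset D_0\cup \hat{D}_{i-1}\mid D'\bigr)=(1-p)\mathbf{1}_{A_{00}}+p\,\mathbf{1}_{A_{11}},
\]
and
\[
\P\bigl(\exists H\in \cH: H\subset D_0\cup \hat{D}_{i}\mid D'\bigr)=(1-p)^2\mathbf{1}_{A_{00}}+p(1-p)(\mathbf{1}_{A_{10}}+\mathbf{1}_{A_{01}})+p^2\mathbf{1}_{A_{11}}.
\]
Subtracting, the second minus the first equals
\[
p(1-p)\bigl(\mathbf{1}_{A_{10}}+\mathbf{1}_{A_{01}}-\mathbf{1}_{A_{00}}-\mathbf{1}_{A_{11}}\bigr),
\]
which is nonnegative by the displayed inequality above. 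Taking expectations over $D'$ yields the claim.

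I expect essentially no obstacles: everything is a finite deterministic check once one realises that the orientedness of $\cH$ is exactly the property that makes $A_{11}=A_{10}\cup A_{01}$, and that decoupling the two directions (passing from a single Bernoulli$(p)$ toggle to two independent Bernoulli$(p)$ toggles) strictly increases the probability of hitting any upward-closed event $A_{10}\cup A_{01}$ that cannot be strengthened by adding both edges. The only care needed is in keeping the conditioning clean, so that the remaining randomness really is just $(X_i,Y_i,Z_i)$.
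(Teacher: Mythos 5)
Your proof is correct and takes essentially the same route as the paper: condition on the digraph on the other $\ell-1$ pairs, and use the orientedness of $\cH$ to observe that the containment event for $D_0 \cup D' + \{x_iy_i, y_ix_i\}$ is the union of the containment events for adding each single edge. The paper phrases this as a three-case analysis on the fixed $D'$ (already contains some $H$ / never will / only after adding the pair), which in your notation is exactly checking $\mathbf{1}_{A_{10}}+\mathbf{1}_{A_{01}}-\mathbf{1}_{A_{00}}-\mathbf{1}_{A_{11}}\geq 0$ pointwise; your algebraic formulation via indicators is a harmless restatement of the same argument.
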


This is sufficient to prove the lemma. Indeed, it follows from Claim~\ref{simpleclaim} that
\begin{align*}
\P(\exists H\in \cH: H\subset D_0\cup D_1)&\overset{\ref{littlewarm}}{=}\P(\exists H\in \cH: H\subset D_0\cup \widehat{D}_\ell)\geq \P(\exists H\in \cH: H\subset D_0\cup \widehat{D}_{\ell-1})\\
\geq \ldots & \geq \P(\exists H\in \cH: H\subset D_0\cup \widehat{D}_0)\overset{\ref{bitwarm1}}{=} \P(\exists H\in \cH: H\subset D_0\cup D^\ast_1).
\end{align*}
Thus, $\P(\exists H\in \cH: H\subset D_0\cup D_1)\geq \P(\exists H\in \cH: H\subset D_0\cup D^\ast_1)$, as required. It is left then only to prove Claim~\ref{simpleclaim}.

\smallskip

%\noindent\emph{Proof of Claim~\ref{simpleclaim}.}
\begin{proof}[Proof of Claim~\ref{simpleclaim}]
Fix an arbitrary $i\in [\ell]$. Let $D_i'=\widehat{D}_i-\{{x_iy_i},{y_ix_i}\}$, so that, by \ref{bitwarm2}, we also have $D_i'=\widehat{D}_{i-1}-\{{x_iy_i},{y_ix_i}\}$. Let $\mathcal{D}$ be the set of possible outcomes for $D_i'$ and fix an arbitrary $D\in \mathcal{D}$. Consider the following three possible cases.
\begin{enumerate}[label = \textbf{\alph{enumi}}]
\item : $D_0\cup D$ contains some $H\in \cH$.\label{case1}
\item : $(D_0\cup D)+\{{x_iy_i},{y_ix_i}\}$ contains no $H\in \cH$.\label{case2}
\item : $D_0\cup D$ contains no $H\in \cH$ but $(D_0\cup D)+\{{x_iy_i},{y_ix_i}\}$ contains some $H\in \cH$.\label{case3}
\end{enumerate}
If case \ref{case1} occurs, then
\[
\P(\exists H\in \cH: H\subset D_0\cup \widehat{D}_{i-1}|D_i'=D)=1=\P(\exists H\in \cH: H\subset  D_0\cup \widehat{D}_{i}|D_i'=D).
\]
If case \ref{case2} occurs, then
\[
\P(\exists H\in \cH: H\subset D_0\cup \widehat{D}_{i-1}|D_i'=D)=0=\P(\exists H\in \cH: H\subset  D_0\cup \widehat{D}_{i}|D_i'=D).
\]
If case \ref{case3} occurs, then, as $\mathcal{H}$ is a set of oriented graphs, at least one of $D_0\cup D+{x_iy_i}$ or $D_0\cup D+{y_ix_i}$ contains some $H\in \mathcal{H}$. Thus,
\[
\P(\exists H\in \cH: H\subset  D_0\cup \widehat{D}_{i}|D_i'=D)\geq p =\P(\exists H\in \cH: H\subset  D_0\cup \widehat{D}_{i-1}|D_i'=D).
\]
Hence, in all cases, and thus for all $D\in\mathcal{D}$, we have
\[
\P(\exists H\in \cH: H \subset D_0\cup \widehat{D}_{i}|D_i'=D)\geq \P(\exists H\in \cH: H \subset D_0\cup \widehat{D}_{i-1}|D_i'=D).
\]
Therefore,
\begin{align*}
\P(\exists H\in \cH: H\subset  D_0\cup \widehat{D}_{i})&=\sum_{D\in \mathcal{D}}\P(D_i'=D)\cdot \P(\exists H\in \cH: H\subset  D_0\cup \widehat{D}_{i}|D_i'=D)
\\
&\geq \sum_{D\in \mathcal{D}}\P(D_i'=D)\cdot \P(\exists H\in \cH: H\subset  D_0\cup \widehat{D}_{i-1}|D_i'=D)\\
&= \P(\exists H\in \cH:H\subset D_0\cup \widehat{D}_{i-1}). %\hspace{6cm}\qed\qedhere
\end{align*}
This completes the proof of the claim, and hence the lemma.
\renewcommand{\qedsymbol}{$\boxdot$}
\qedhere
\renewcommand{\qedsymbol}{$\square$}
\qedsymbol
\end{proof}
\renewcommand{\qedsymbol}{}
\end{proof}
\renewcommand{\qedsymbol}{$\square$}
%\end{proof}
\fi

%%%%%%%%%%%%%%%%%%%%%%%%%%%%%%%%%%%%%%%%%%%%%%%%%%%%%%%%%%%%%%%%%%%%%%%%%%%%%%%%%%%%%%%%%%%%%%%%%%%%%%%%%%%%%%%%
%%%%%%%%%%%%%%%%%%%%%%%%%%%%%%%%%%%%%%%%%%%%%%%%%%%%%%%%%%%%%%%%%%%%%%%%%%%%%%%%%%%%%%%%%%%%%%%%%%%%%%%%%%%%%%%%
%%%%%%%%%%%%%%%%%%%%%%%%%%%%%%%%%%%%%%%%%%%%%%%%%%%%%%%%%%%%%%%%%%%%%%%%%%%%%%%%%%%%%%%%%%%%%%%%%%%%%%%%%%%%%%%%
%%%%%%%%%%%%%%%%%%%%%%%%%%%%%%%%%%%%%%%%%%%%%%%%%%%%%%%%%%%%%%%%%%%%%%%%%%%%%%%%%%%%%%%%%%%%%%%%%%%%%%%%%%%%%%%%

\subsection{Splitting sets with the local lemma}\label{subsec:div}
We will prove Lemma~\ref{lem-div} with a standard application of the following version of the Local Lemma, due to Lov\'asz (see~\cite[Theorem 1.1]{spencer}), where the dependence graph of a set of events $A_1,\ldots,A_n$ is the graph with vertex set $\{A_1,\ldots,A_n\}$ and an edge between $A_i$ and $A_j$, $i,j\in [n]$ and $i\neq j$, exactly when the events $A_i$ and $A_j$ are not independent.

\begin{theorem}\label{genlocal} %From Asymptotic lower bounds for Ramsey functions by JoelSpencer, by a Lovasz result.
Let $A_1,\ldots,A_n$ be events in a probability space $\Omega$ with dependence graph $K$. Suppose there exist $0<q_1,\ldots,q_n<1$ such that, for each $i\in [n]$,
\[
\P(A_i)\leq q_i\prod_{j:A_iA_j\in E(K)}(1-q_j).
\]
Then,  with strictly positive probability, no such event $A_i$ occurs.
\end{theorem}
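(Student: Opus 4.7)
The plan is to prove, by induction on $|S|$, the following strengthening of the statement: for every index $i$ and every subset $S\subseteq \{A_1,\ldots,A_n\}\setminus\{A_i\}$,
\[
\P\Bigl(A_i \;\Big|\; \bigcap_{A_j\in S}\bar{A}_j\Bigr)\leq q_i.
\]
Given this, the theorem follows immediately from the chain rule:
\[
\P\Bigl(\bigcap_{i=1}^n \bar{A}_i\Bigr) = \prod_{i=1}^n \P\Bigl(\bar{A}_i \;\Big|\; \bigcap_{j<i}\bar{A}_j\Bigr) \geq \prod_{i=1}^n (1-q_i) > 0.
\]
The base case $|S|=0$ is immediate since the hypothesis reads $\P(A_i)\leq q_i\prod_j(1-q_j)\leq q_i$.

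For the inductive step, I would split $S=S'\sqcup S''$, where $S'$ is the set of events in $S$ adjacent to $A_i$ in the dependence graph $G$. By the definition of the dependence graph, $A_i$ is independent of the $\sigma$-algebra generated by $S''$, so the numerator
\[
\P\Bigl(A_i \cap \bigcap_{A_j\in S'}\bar{A}_j \;\Big|\; \bigcap_{A_j\in S''}\bar{A}_j\Bigr) \leq \P(A_i) \leq q_i\prod_{j:\,A_iA_j\in E(G)}(1-q_j).
\]
For the denominator, enumerate $S'=\{A_{j_1},\ldots,A_{j_k}\}$ and expand via the chain rule; the inductive hypothesis applies to each factor because the relevant conditioning set is strictly smaller than $S$, giving
\[
\P\Bigl(\bigcap_{A_j\in S'}\bar{A}_j \;\Big|\; \bigcap_{A_j\in S''}\bar{A}_j\Bigr) = \prod_{\ell=1}^k \P\Bigl(\bar{A}_{j_\ell} \;\Big|\; \bigcap_{\ell'<\ell}\bar{A}_{j_{\ell'}} \cap \bigcap_{A_j\in S''}\bar{A}_j\Bigr) \geq \prod_{\ell=1}^k (1-q_{j_\ell}).
\]
Dividing the two bounds yields $\P(A_i\mid \bigcap_{A_j\in S}\bar{A}_j)\leq q_i\prod_{j:\,A_iA_j\in E(G),\,A_j\notin S'}(1-q_j)\leq q_i$, as required.

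The only real obstacle is the conditional-probability bookkeeping in the inductive step: one must arrange the split $S=S'\sqcup S''$ so that independence kills the numerator down to $\P(A_i)$, and then verify that the factors $(1-q_{j_\ell})$ extracted in the denominator cancel against precisely the same factors in the numerator bound, leaving only factors that are at most $1$. Every other step is a routine application of the chain rule and the inductive hypothesis.
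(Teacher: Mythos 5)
Your argument is the standard inductive proof of the Lov\'asz Local Lemma (the paper itself gives no proof, only a citation to Spencer), and the structure is essentially correct: the inductive bound $\P(A_i\mid\bigcap_{A_j\in S}\bar A_j)\leq q_i$, the split $S=S'\sqcup S''$, the cancellation of the factors $(1-q_{j_\ell})$ against the corresponding factors in the hypothesis, and the final chain-rule conclusion are all exactly as in the classical proof.

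There is, however, one step that does not follow from what the paper actually wrote, and you should be aware of it because it is the entire crux of the lemma. You invoke ``By the definition of the dependence graph, $A_i$ is independent of the $\sigma$-algebra generated by $S''$.'' The paper defines the dependence graph by \emph{pairwise} (non-)independence: $A_iA_j$ is an edge iff $A_i$ and $A_j$ are not independent. Pairwise independence of $A_i$ from each event in $S''$ does not imply independence of $A_i$ from the $\sigma$-algebra generated by $S''$, and in fact the theorem is false under the pairwise definition (standard counterexample: two fair bits $X,Y$ with $A=\{X=1\}$, $B=\{Y=1\}$, $C=\{X=Y\}$ are pairwise independent, so the dependence graph is empty and one may take $q_i=1/2$, yet $\bar A\cap\bar B\cap\bar C=\emptyset$). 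What the proof genuinely needs, and what the Local Lemma genuinely requires, is that each $A_i$ be \emph{mutually} independent of the collection of its non-neighbours; under that hypothesis your step is valid. So your proof is correct for the lemma as it is actually used and as it appears in Spencer, but you should explicitly state that you are using the mutual-independence form of the dependency graph rather than deducing it from the paper's weaker phrasing. A secondary, minor point: you should note that all the conditioning events have positive probability; this is usually handled by observing that the same induction simultaneously establishes $\P(\bigcap_{A_j\in S}\bar A_j)>0$ for every $S$, which your final chain-rule display in fact proves once the conditional bound is in hand.
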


We will also use the following well-known Chernoff bound  (see, for example,~\cite[Corollary 2.3]{bollorandomgraphs}).
\lem\label{chernoff} If $X$ is a binomial variable with standard parameters~$n$ and $p$, denoted $X=\mathrm{Bin}(n,p)$, and $\e$ satisfies $0<\e\leq 3/2$, then $\P(|X-\E X|\geq \e \E X)\leq 2\exp\left(-\e^2\E X/3\right)$.
\ma

Using Theorem~\ref{genlocal} and Lemma~\ref{chernoff}, we can now prove Lemma~\ref{lem-div}.

\ifproofdone
\noindent\textbf{Proof of Lemma~\ref{lem-div} is complete.}
\else
\begin{proof}[Proof of Lemma~\ref{lem-div}]
Let $p=1/5$ and $n\geq 10^3$. Let $D$ be a pseudorandom digraph with vertex set $[n]$ and exceptional set $X\subset V(D)$ which satisfies $|X|\leq n^{3/4}$. Take a partition $V(D)\setminus X=U_0\cup U_1\cup U_2$ so that each $v\in V(D)\setminus X$ appears independently in $U_0$, $U_1$ and $U_2$ with probability $1-2p$, $p$ and $p$, respectively.
For each $v\in [n]$, let $E_v$ be the event that, for some $i\in [2]$ and $\diamond\in \{+,-\}$, we have $d^\diamond(v,U_i)< \log n/5000$. Let $E_0$ be the event that either $|U_1|>n/4$ or $|U_2|> n/4$.

If $E_0$ does not hold, then, using that $|X|\leq n^{3/4}\leq n/4$, take a partition $V(D)\setminus X=V_0\cup V_1\cup V_2$ such that $U_1\subset V_1$, $U_2\subset V_2$,  and $|V_1|=|V_2|=\lfloor n/4 \rfloor$. If $E_0$ does hold then let $V_i=U_i$ for each $i\in \{0,1,2\}$. Note that, if $E_0$ does not hold, and no $E_v$, $v\in [n]$, holds, then $V_0\cup V_1\cup V_2$ satisfies the requirements in the lemma. Therefore, it is sufficient to show that there is some $n_0$ such that, whenever $n\geq n_0$, no such event holds with positive probability.

Let $\Delta=100\log n$, so that, by \ref{pseudo0} in the definition of pseudorandomness, $\Delta^\pm(D)\leq \Delta$. Note that, for each $v\in [n]$, each event $E_v$ has some dependence on only $E_0$ and the events $E_u$, $u\in Y_v:=\{u\in [n]:(N^+(u)\cup N^-(u))\cap (N^+(v)\cup N^-(v))\neq\emptyset\}$, and, furthermore, that $|Y_v|\leq 4\Delta^2$.
Let $q_0=1/2$ and $q=\exp(-\sqrt{\log n})$. Thus, the lemma follows from the following claim and Theorem~\ref{genlocal} applied with $q_0$, and $q_v=q$ for each $v\in [n]$.

\begin{claim}\label{breeze} There is some $n_0$ such that, if $n\geq n_0$, then $\P(E_0)\leq q_0(1-q)^n$, and, for each $v\in [n]$, $\P(E_v)\leq q(1-q_0)(1-q)^{4\Delta^2}$.
\end{claim}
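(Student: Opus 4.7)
The plan is to prove both inequalities in Claim~\ref{breeze} by standard Chernoff estimates, exploiting the fact that the targets are only mildly small: the choice $q=e^{-\sqrt{\log n}}$ decays slower than any fixed negative power of $n$, and moreover $\Delta^2 q = 10^4(\log n)^2 e^{-\sqrt{\log n}} = o(1)$, so $(1-q)^{4\Delta^2}\to 1$. This gives ample slack to meet the bounds demanded by Theorem~\ref{genlocal}.

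For the first inequality, observe that each $|U_i|$ with $i\in[2]$ is a binomial variable on $|V(D)\setminus X|\leq n$ trials with success probability $p=1/5$, so $\E|U_i|\leq n/5$. Applying Lemma~\ref{chernoff} with $\e$ slightly larger than $1/4$ (chosen so that $(1+\e)n/5\leq n/4$) gives $\P(|U_i|>n/4)\leq 2\exp(-\Omega(n))$, and a union bound over $i$ yields $\P(E_0)\leq 4\exp(-\Omega(n))$. Using $1-x\geq e^{-2x}$ for $x\in[0,1/2]$, one has $q_0(1-q)^n\geq \tfrac{1}{2}\exp(-2ne^{-\sqrt{\log n}})$; since $ne^{-\sqrt{\log n}}=o(n)$, the Chernoff bound comfortably beats the target for sufficiently large $n$.

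For the second inequality, fix $v\in[n]$ and a pair $(i,\diamond)\in[2]\times\{+,-\}$. Property~\ref{pseudo1} gives $|N^\diamond(v)\setminus X|\geq \log n/500$, so $d^\diamond(v,U_i)$ stochastically dominates a binomial variable with mean at least $\log n/2500$. Applying Lemma~\ref{chernoff} with $\e=1/2$ gives $\P(d^\diamond(v,U_i)<\log n/5000)\leq 2\exp(-\log n/30000)$, and union bounding over the four pairs $(i,\diamond)$ yields $\P(E_v)\leq 8 n^{-1/30000}$. Meanwhile, $(1-q)^{4\Delta^2}\geq \exp(-8\Delta^2 q)=1-o(1)$, so the target satisfies $q(1-q_0)(1-q)^{4\Delta^2}\geq (1-o(1))\tfrac{1}{2}e^{-\sqrt{\log n}}$. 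Once $n$ is large enough that $\log n/30000>\sqrt{\log n}$, we have $8 n^{-1/30000}\leq \tfrac{1}{2}(1-o(1))e^{-\sqrt{\log n}}$, as required.

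The task is essentially algebraic bookkeeping, with one comparison to watch at each step: $\exp(-\Omega(n))\leq \exp(-O(ne^{-\sqrt{\log n}}))$ for the first bound, and $n^{-1/30000}\leq e^{-\sqrt{\log n}}$ for the second. Both comparisons hold because $e^{-\sqrt{\log n}}$ sits between any fixed polynomial decay and $n^{-o(1)}$: formally, $e^{-\sqrt{\log n}}\geq n^{-\gamma}$ for every fixed $\gamma>0$ and $n$ large. Taking $n_0$ large enough to validate both asymptotic comparisons concludes the proof of the claim and hence, via Theorem~\ref{genlocal}, of Lemma~\ref{lem-div}.
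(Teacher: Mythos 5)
Your proof is correct and follows essentially the same route as the paper's: both handle $E_0$ and $E_v$ by the same Chernoff estimates (using \ref{pseudo1} and the size of $[n]\setminus X$ respectively) and then compare against the Lov\'asz-Local-Lemma thresholds by exploiting that $q=e^{-\sqrt{\log n}}$ decays slower than $n^{-\Omega(1)}$ and that $\Delta^2 q=o(1)$. The only difference is that you track explicit constants where the paper uses $\exp(-\Omega(\log n))$, $\exp(-\Omega(n))$ and $o(\cdot)$ notation, which is a matter of presentation rather than substance.
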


\smallskip

%\noindent\emph{Proof of Claim~\ref{breeze}.}
\begin{proof}[Proof of Claim~\ref{breeze}] First, take an arbitrary $v\in [n]$. By \ref{pseudo1} we have $d^\pm(v,V(D)\setminus X)\geq \log n/500$. For each $i\in [2]$, then, $\E|N^\pm(v)\cap U_i|\geq \log n/2500$. Therefore, by Lemma~\ref{chernoff} with $\eps=1/2$, we have $\P(E_v)= \exp(-\Omega(\log n))$.
Noting further that $\Delta^2q=o(1)$, we have
\[
\P(E_v)= \exp(-\Omega(\log n))=o(q)=o(q(1-4\Delta^2q))=o(q(1-q_0)(1-q)^{4\Delta^2}).
\]
Thus, for sufficiently large $n$, $\P(E_v)\leq q(1-q_0)(1-q)^{4\Delta^2}$.

Now, as $|[n]\setminus X|\geq n-n^{3/4}\geq 3n/4$, we have $n/10\leq \E|U_i|\leq n/5$ for each $i\in [2]$. Thus, by Lemma~\ref{chernoff} with $\eps=1/100$, we have $\P(E_0)=\exp(-\Omega(n))$. Therefore, for sufficiently large $n$, $q_0(1-q)^n\geq \exp(-2qn)/2=\exp(-o(n))\geq \P(E_0)$, as required. %\hspace{6.5cm} \qed
\renewcommand{\qedsymbol}{$\boxdot$}
\qedhere
\renewcommand{\qedsymbol}{$\square$}
\qedsymbol
\end{proof}
\renewcommand{\qedsymbol}{}
\end{proof}
\renewcommand{\qedsymbol}{$\square$}
\fi

%%%%%%%%%%%%%%%%%%%%%%%%%%%%%%%%%%%%%%%%%%%%%%%%%%%%%%%%%%%%%%%%%%%%%%%%%%%%%%%%%%%%%%%%%%%%%%%%%%%%%%%%%%%%%%%%
%%%%%%%%%%%%%%%%%%%%%%%%%%%%%%%%%%%%%%%%%%%%%%%%%%%%%%%%%%%%%%%%%%%%%%%%%%%%%%%%%%%%%%%%%%%%%%%%%%%%%%%%%%%%%%%%
%%%%%%%%%%%%%%%%%%%%%%%%%%%%%%%%%%%%%%%%%%%%%%%%%%%%%%%%%%%%%%%%%%%%%%%%%%%%%%%%%%%%%%%%%%%%%%%%%%%%%%%%%%%%%%%%
%%%%%%%%%%%%%%%%%%%%%%%%%%%%%%%%%%%%%%%%%%%%%%%%%%%%%%%%%%%%%%%%%%%%%%%%%%%%%%%%%%%%%%%%%%%%%%%%%%%%%%%%%%%%%%%%

\subsection{Expansion with very high probability}\label{subsec:connectstart}
In this section, we show how to get an expansion property in some subgraph of a random graph with very high probability, which we then use in Section~\ref{subsec:connectend} to prove Lemma~\ref{lem-connect}.
We start with the following simple proposition concerning the neighbourhoods of large sets in a random graph.

\begin{prop}\label{connsets} For each fixed $c>0$, if $p=c \log n/n$, then, with probability $1-\exp(-\omega(n))$, the following hold in $G=G(n,p)$.
\stepcounter{propcounter}
\begin{enumerate}[label = \textbf{\emph{\Alph{propcounter}\arabic{enumi}}}]
\item Every set $U\subset V(G)$ with $|U|=n\log^{[3]}n/2\log n$ satisfies $|N(U)|\geq 9n/10$.\label{mara1}
\item Every disjoint pair $A,B\subset V(G)$ of subsets of size at least $n/\log^{2/5}n$ have some edge between them.\label{mara2}
\end{enumerate}
\end{prop}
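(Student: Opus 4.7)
The plan is to establish both parts of Proposition~\ref{connsets} by elementary first-moment union bounds, exploiting the fact that in each case the probability of a single bad configuration decays so rapidly that it beats the number of configurations by a factor tending to infinity faster than $n$.

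For \ref{mara1}, I would argue by complementation. Set $u := \lceil n\lg{3}/(2\log n)\rceil$, which satisfies $u = o(n)$. If some $U$ of size $u$ had $|N(U)| < 9n/10$, then for $n$ large the set $V(G)\setminus (U\cup N(U))$ would contain a subset $W$ of size exactly $\lceil n/11\rceil$ with no $G$-edges to $U$. For a fixed pair $(U,W)$ of these sizes, the probability that no edge runs between $U$ and $W$ is at most $(1-p)^{u\lceil n/11\rceil}\leq \exp(-cn\lg{3}/23)$, while the number of such pairs is at most $\binom{n}{u}\binom{n}{\lceil n/11\rceil}\leq 4^n = \exp(O(n))$. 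Since $\lg{3}\to\infty$, the union bound gives probability at most $\exp(-\omega(n))$.

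For \ref{mara2}, a similar union bound suffices. It is enough to show that, with probability $1-\exp(-\omega(n))$, no disjoint pair $A,B$ with $|A|=|B|=k := \lceil n/\log^{2/5}n\rceil$ spans no edge. For a fixed such pair, the probability of no edge between them is at most $(1-p)^{k^2}\leq \exp(-cn\log^{1/5}n)$. The number of such pairs is at most $\binom{n}{k}^2\leq (en/k)^{2k} = \exp(O(k\log\log n)) = \exp(O(n\log\log n/\log^{2/5}n))$, which is $o(n\log^{1/5}n)$. The union bound again yields probability at most $\exp(-\omega(n))$.

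I do not anticipate any genuine obstacle; both parts are standard random-graph calculations. The only thing requiring a moment's care is the comparison of exponents: $n\lg{3}$ dominating $n$ for \ref{mara1}, and $n\log^{1/5}n$ dominating $n\log\log n/\log^{2/5}n$ for \ref{mara2} (that is, $\log\log n = o(\log^{3/5}n)$). Both comparisons hold trivially for large $n$, so the required probability bound follows in each case.
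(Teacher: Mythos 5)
Your proposal is correct and follows essentially the same route as the paper: in both parts you show by a first-moment union bound that no "bad" pair of linear (or near-linear) sets spans zero edges, and you deduce \ref{mara1} by complementation exactly as the paper does (the paper takes the witness set of size $\geq n/100$ rather than your $\lceil n/11\rceil$, and bounds the pair count crudely by $2^{2n}$ rather than your sharper $\binom{n}{k}^2$, but these are immaterial choices). No gap.
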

\ifproofdone
\noindent\textbf{Proof of Proposition~\ref{connsets} is complete.}
\else
\begin{proof}
Given any disjoint subsets $U,U'\subset  V(G)$ with $|U|=n\log^{[3]}n/2\log n$ and $|U'|\geq n/100$, the probability there are no edges between $U$ and $U'$ in $G$ is
\[
(1-p)^{|U||U'|}= \exp(-\Omega(p n^2\log^{[3]}n/\log n))=\exp(-\Omega(n\log^{[3]}n)).
\]
Therefore, as there are at most $2^{2n}$ such pairs $U,U'\subset V(G)$, there are no such pairs with no edges between them with probability at least $1-4^{n}\cdot \exp(-\Omega(n\log^{[3]}n))=1-\exp(-\omega(n))$. If such a property holds, then, for any $U\subset  V(G)$ with $|U|=n\log^{[3]}n/2\log n$, we have $|V(G)\setminus (N(U)\cup U)|<n/100$, and hence
\[
|N(U)|\geq n-n/100-|U|\geq 9n/10.
\]
Thus, \emph{\ref{mara1}} holds with probability $1-\exp(-\omega(n))$.

Now, given disjoint subsets $A,B\subset V(G)$ with size at least $n/\log^{2/5}n$, the probability there is no edge between them is
\[
(1-p)^{|A||B|}= \exp(-\Omega(p n^2/\log^{4/5} n)= \exp(- \Omega(n\log^{1/5}n)).
\]
Therefore, as there are at most $2^{2n}$ such pairs $A,B\subset V(G)$, there are no such pairs with no edges between them with probability at least $1-4^{n}\cdot \exp(- \Omega(n\log^{1/5}n))=1-\exp(-\omega(n))$. That is, with probability $1-\exp(-\omega(n))$, \emph{\ref{mara2}} holds.
\end{proof}
\fi

Using \emph{\ref{mara1}} from Proposition~\ref{connsets}, we now find in a random graph a subgraph with almost the same vertex set and a good expansion property, by removing a maximal set $B$ without this expansion property.

\begin{lemma}\label{findexpander}
Let $c>0$, $p=c \log n/n$, $d=\log^{1/3} n$ and $m=n/100d$. Let $V_0\subset [n]$ be a set of at least $n/4$ vertices and $G=G(n,p)$.

Then, with probability $1-\exp(-\omega(n))$, there is a set $B\subset [n]$ with $1\leq |B|\leq n\log^{[3]}n/2\log n$ such that, for each $U\subset V(G)\setminus B$ with $|U|\leq 2m$, we have $|N(U,V_0\setminus B)|\geq d|U|$.
\end{lemma}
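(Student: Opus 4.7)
The plan is to define $B$ by a greedy ``bad-set absorption'' process and control its size using Proposition~\ref{connsets}\emph{\ref{mara1}} after a short deduction. Set $k_0=\lceil n\log^{[3]}n/(2\log n)\rceil$ (matching the threshold in \emph{\ref{mara1}}); for $n$ large one has $2k_0\leq 2m$, and $k_0\leq n\log^{[3]}n/\log n$. Starting from $B_0=\emptyset$, at step $i+1$ I would pick any $U_i\subset V(G)\setminus B_i$ with $|U_i|\leq 2m$ and $|N(U_i,V_0\setminus B_i)|\leq d|U_i|$, if one exists, and set $B_{i+1}=B_i\cup U_i$; otherwise terminate. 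Using the non-strict $\leq$ here ensures that at termination $|N(U,V_0\setminus B)|\geq d|U|+1$ for all relevant $U$, so if $B=\emptyset$ at termination I can prepend a single arbitrary vertex and still have $|N(U,V_0\setminus B)|\geq d|U|$, guaranteeing $|B|\geq 1$.

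First I would condition on property \emph{\ref{mara1}} of Proposition~\ref{connsets}, which holds with probability $1-\exp(-\omega(n))$. From it I derive that every $T\subset V(G)$ with $k_0\leq |T|\leq 2m$ satisfies $|N(T,V_0)|\geq n/20$: a size-$k_0$ subset $T'\subset T$ has $|N(T')|\geq 9n/10$ by \emph{\ref{mara1}}, so $|N(T)|\geq 9n/10-|T|\geq 4n/5$, and intersecting with $V_0$ (where $n-|V_0|\leq 3n/4$) leaves at least $4n/5-3n/4=n/20$. Call this derived property $(\star)$.

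The central bookkeeping observation is the \emph{iterative bound}: since the $U_i$ are pairwise disjoint and $B_{i-1}\subset B_j$ for $i\leq j$,
\[
|N(B_j,V_0)|=|N(B_j,V_0\setminus B_j)|\leq\sum_{i\leq j}|N(U_i,V_0\setminus B_j)|\leq\sum_{i\leq j}|N(U_i,V_0\setminus B_{i-1})|\leq d|B_j|.
\]
Combined with $(\star)$ this first rules out large bad sets: if some $U_i$ were bad with $|U_i|\geq k_0$, then $(\star)$ gives $|N(U_i,V_0\setminus B_{i-1})|\geq n/20-|B_{i-1}|>d|U_i|$ (using $d\cdot 2m=n/50$ and $|B_{i-1}|<k_0=o(n)$), contradicting badness. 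Now suppose for contradiction that the process ever reaches $|B_j|\geq k_0$, and pick $j$ minimal. Then $|B_{j-1}|<k_0$ and $|U_j|<k_0$, so $k_0\leq|B_j|<2k_0\leq 2m$. Thus $(\star)$ applies to $B_j$ and gives $|N(B_j,V_0)|\geq n/20$, while the iterative bound gives $|N(B_j,V_0)|\leq d|B_j|\leq 2dk_0=n\log^{[3]}n/\log^{2/3}n$, which is a contradiction since $\log^{2/3}n/\log^{[3]}n\to\infty$. Hence the process must terminate with $|B|<k_0\leq n\log^{[3]}n/\log n$, as required.

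The main obstacle is arranging the iterative bound cleanly. It crucially uses both the pairwise disjointness of the absorbed sets $U_i$ and the nesting $B_{i-1}\subset B_j$, so that each $|N(U_i,V_0\setminus B_{i-1})|$ is controlled by the badness condition imposed at the moment $U_i$ was added, even though $B_j$ has grown substantially since then. Once this inequality is in place, the numerical comparison $n/20\gg 2dk_0$, driven by the gap between $d=\log^{1/3}n$ and $\log n$, closes the argument with comfortable slack.
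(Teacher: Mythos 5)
Your construction of $B$ is a genuinely different route from the paper's. The paper defines $B$ in one shot as a \emph{largest} set with $|B|\le 3m$ and $|N(B,V_0)|\le 2d|B|$, bounds $|B|$ via \emph{\ref{mara1}}, and then deduces expansion for any $U$ disjoint from $B$ by maximality together with the subadditivity bound $|N(U,V_0\setminus B)|\ge |N(B\cup U,V_0)|-|N(B,V_0)|$. You instead build $B$ greedily by absorbing pairwise-disjoint bad sets $U_i$ and replace the maximality step by your ``iterative bound''
\[
|N(B_j,V_0)|\le \sum_{i\le j}|N(U_i,V_0\setminus B_{i-1})|\le d|B_j|,
\]
which is the right aggregate inequality and is correct: disjointness of the $U_i$ gives the first step and nesting $B_{i-1}\subset B_j$ gives the second. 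Your derived property $(\star)$ (any $T$ with $k_0\le|T|\le 2m$ has $|N(T,V_0)|\ge n/20$) plays exactly the role that the direct use of \emph{\ref{mara1}} plays in the paper's bound $|B|<m$. What the greedy route buys you is that expansion for $U$ outside $B$ is automatic at termination rather than needing a separate maximality argument; what it costs is that you must set up and maintain the bookkeeping that gives the iterative bound.

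Two small repairs are needed. First, you must require each absorbed $U_i$ to be nonempty: under the non-strict threshold $|N(U,V_0\setminus B_i)|\le d|U|$ the empty set is always ``bad,'' so the process as literally written never terminates. Second, and more substantively, your handling of the $B=\emptyset$ case relies on the inference that ``$|N(U,V_0\setminus B)|>d|U|$ implies $|N(U,V_0\setminus B)|\ge d|U|+1$,'' which is false because $d=\log^{1/3}n$ is not an integer, so $d|U|$ is never an integer; you only get $|N(U,V_0\setminus B)|\ge\lceil d|U|\rceil$, and after deleting one vertex this degrades to $\lfloor d|U|\rfloor$, which can be strictly less than $d|U|$. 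The paper sidesteps exactly this by using slack factor $2d$ rather than $d$ in the badness threshold, so that at the end one has $|N(U,V_0\setminus B)|>2d|U|\ge d|U|+d\ge d|U|+1$, which survives the loss of one vertex. Replacing $d$ by $2d$ in your badness condition fixes the gap and leaves all of your numerical estimates intact: the iterative bound becomes $|N(B_j,V_0)|\le 2d|B_j|<4dk_0=o(n)$, still comfortably less than $n/20$, and the ``rule out large $U_i$'' step compares $n/20-k_0$ with $4dm=n/25$, which still works.
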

\ifproofdone
\noindent\textbf{Proof of Lemma~\ref{findexpander} is complete.}
\else
\begin{proof} By Proposition~\ref{connsets}, we have that \emph{\ref{mara1}} holds in $G$ with probability $1-\exp(-\omega(n))$. Let $B\subset V(G)$ be a largest set satisfying $|B|\leq 3m$ such that $|N(B,V_0)|\leq 2d|B|$, noting that the empty set demonstrates that such a set $B$ exists. Note that, if $B=\emptyset$, then taking an arbitrary set $B'\subset V(G)$ with $|B'|=1$, we have, for each $U\subset V(G)\setminus B'$ with $1\leq |U|\leq 2m$, that $|N(U,V_0\setminus B')|\geq 2d|U|-1\geq d|U|$. Therefore, we can assume that $|B|\geq 1$.
We will now show that $B$ satisfies the property in the lemma, starting with showing that, in fact, $|B|<n\log^{[3]}n/2\log n\leq m$.

As $|B|\leq 3m$ and $|N(B,V_0)|\leq  2d|B|\leq 6dm= 6n/100$, we have
\[
|N(B)|\leq |V(G)\setminus V_0|+|N(B,V_0)|\leq 3n/4+6n/100< 9n/10.
\]
Thus, by \emph{\ref{mara1}}, we have $|B|<n\log^{[3]}n/2\log n\leq m$.

Now, let $U\subset V(G)\setminus B$ with $|U|\leq 2m$ and $U\neq\emptyset$. As $|B\cup U|\leq 3m$, by the choice of $B$ we have that $|N(B\cup U,V_0)|> 2d|B\cup U|$. Then,
\[
|N(U,V_0\setminus B)|\geq |N(B\cup U,V_0)|-|N(B,V_0)|> 2d|B\cup U|-2d|B|=2d|U|\geq d|U|,
\]
as required.
\end{proof}
\fi

%%%%%%%%%%%%%%%%%%%%%%%%%%%%%%%%%%%%%%%%%%%%%%%%%%%%%%%%%%%%%%%%%%%%%%%%%%%%%%%%%%%%%%%%%%%%%%%%%%%%%%%%%%%%%%%%
%%%%%%%%%%%%%%%%%%%%%%%%%%%%%%%%%%%%%%%%%%%%%%%%%%%%%%%%%%%%%%%%%%%%%%%%%%%%%%%%%%%%%%%%%%%%%%%%%%%%%%%%%%%%%%%%
%%%%%%%%%%%%%%%%%%%%%%%%%%%%%%%%%%%%%%%%%%%%%%%%%%%%%%%%%%%%%%%%%%%%%%%%%%%%%%%%%%%%%%%%%%%%%%%%%%%%%%%%%%%%%%%%
%%%%%%%%%%%%%%%%%%%%%%%%%%%%%%%%%%%%%%%%%%%%%%%%%%%%%%%%%%%%%%%%%%%%%%%%%%%%%%%%%%%%%%%%%%%%%%%%%%%%%%%%%%%%%%%%

\subsection{Expansion into connection}\label{subsec:connectend}
To connect pairs of vertices efficiently with paths using expansion properties we will use the extendability techniques of Glebov, Krivelevich and Johannson~\cite{RomanPhD}. These methods flexibly embed bounded-degree trees in larger graphs using certain expansion conditions, though we will only use them to find paths with specified lengths between specified vertex pairs. (More generally, see \cite[Section 3.1]{myselfspanningtrees} for a practical overview of the use of the $(d,m)$-extendability methods and \cite{myselfdirham} for a directed generalisation for finding consistently oriented paths.)

We first recall the key definition of \emph{$(d,m)$-extendability}, using the following \emph{inclusive neighbourhood} $N'(U)$ of a vertex set $U$.

\de For each $U\subset V(G)$, let $N'(U)=\{u\in V(G):\exists v\in U\text{ s.t.\ }vu\in E(G)\}=\cup_{u\in U}N(u)$.
\fn

\de\label{extendabledefn}
Let $d\geq 3$ and $m\geq 1$, let~$G$ be a graph, and let $S\subset G$ be a subgraph of~$G$. We say that~$S$ is \emph{$(d,m)$-extendable in $G$} if~$S$ has maximum degree at most~$d$ and, for all sets $U\subset V(G)$ with $|U|\leq 2m$,
\begin{equation}\label{extendthing}
|N'(U)\setminus V(S)|\geq(d-1)|U|-\sum_{x\in U\cap V(S)}(d_S(x)-1).
\end{equation}
\fn

Given two vertices in an extendable subgraph, we can add a path with a specified length between them (subject to certain simple conditions) to, crucially, get a subgraph which is still extendable. This allows a sequence of paths to be added while remaining extendable. This is possible using the following lemma.

\begin{lemma}\cite[Corollary 3.12]{myselfspanningtrees}\label{connectcor}
Let $d,m,\ell\in\N$ satisfy $m\geq 1$ and $d\geq 3$. Let $k=\lceil\log (2m)/ \log(d-1)\rceil$ and $\ell\geq 2k+1$. Let~$G$ be a graph in which any two disjoint sets of size $m$ have some edge between them. Let~$S$ be a $(d,m)$-extendable subgraph of~$G$ with at most $|G|-10dm-(\ell-2k-1)$ vertices.

Suppose $a$ and $b$ are two distinct vertices in~$S$, both with degree at most $d/2$ in~$S$. Then, there is an $a$,$b$-path $P$ in $G$, with length $\ell$ and internal vertices outside of~$S$, so that $S+P$ is $(d,m)$-extendable in $G$.
\end{lemma}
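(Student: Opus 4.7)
The plan is a standard ``BFS-and-connect-then-prune'' construction in the extendability framework. The basic engine is a single-edge extension principle implicit in Definition~\ref{extendabledefn}: if $S'$ is $(d,m)$-extendable, has $|V(G)| - |V(S')|$ comfortably large, and $v \in V(S')$ has $d_{S'}(v) < d$, then (\ref{extendthing}) applied to $U = \{v\}$ produces an exterior neighbour $w$, and $S' + vw$ is again $(d,m)$-extendable.

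I would work in three phases. In Phase~1, starting from $a$ (whose $S$-degree is at most $d/2$), grow a path $P_1$ of length $\ell - 2k - 1$ into $V(G) \setminus V(S)$ by single-edge extensions, ending at a new vertex $a'$. In Phase~2, grow rooted trees $T_a$ from $a'$ and $T_b$ from $b$, each of depth exactly $k$ with branching factor $d-1$ at every non-leaf, again by single-edge extensions. Since $(d-1)^k \geq 2m$ by the choice of $k$, each tree has at least $2m$ leaves at depth $k$, all lying outside $V(S) \cup V(P_1)$ and disjoint from the other tree. In Phase~3, choose disjoint $m$-subsets of the two depth-$k$ leaf sets and invoke the hypothesis that any two disjoint $m$-sets in $G$ share an edge to obtain $u \in T_a$, $v \in T_b$ with $uv \in E(G)$; add this edge. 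The $a, b$-path $P$ is the concatenation of $P_1$, the $a'$-to-$u$ branch in $T_a$, the edge $uv$, and the $v$-to-$b$ branch in $T_b$, of total length $(\ell - 2k - 1) + k + 1 + k = \ell$. The size hypothesis $|V(S)| \leq |V(G)| - 10dm - (\ell - 2k - 1)$ is exactly what feeds the single-edge extensions throughout: the $\ell - 2k - 1$ term covers Phase~1, while the $10dm$ term comfortably absorbs the $O(dm)$ vertices added by $T_a$ and $T_b$.

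The main obstacle is that Phase~3 leaves us with the full subgraph $S^{\sharp} := S + P_1 + T_a + T_b + uv$, whereas the lemma demands $(d,m)$-extendability of the much smaller $S + P$. For this I would invoke a pruning lemma: if $S^{\star}$ is $(d,m)$-extendable and $w \in V(S^{\star})$ has $d_{S^{\star}}(w) = 1$ with unique neighbour $u$, then $S^{\star} - w$ is $(d,m)$-extendable. Checking (\ref{extendthing}) is direct: deleting $w$ increases the LHS by $[w \in N'(U)]$ and the RHS by $[u \in U]$ (the contribution of $w$ to the sum was $d_{S^{\star}}(w) - 1 = 0$, and removing $w$ lowers $d_{S^{\star}}(u)$ by $1$), and $[w \in N'(U)] \geq [u \in U]$ because $uw \in E(G)$ forces $w \in N'(U)$ whenever $u \in U$. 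Iteratively pruning the depth-$k$ leaves of $T_a$ and $T_b$ that do not lie on $P$, then the lower-depth leaves newly exposed by these deletions, and so on, peels off every tree vertex not on $P$ while preserving $(d,m)$-extendability at each step, terminating at $S + P$ as required.
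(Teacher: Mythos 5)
Note first that this paper does not prove Lemma~\ref{connectcor} at all: it is imported from \cite[Corollary 3.12]{myselfspanningtrees}, and your outline (lengthen a path from $a$ to use up $\ell-2k-1$ edges, grow trees of depth $k$ from the two ends, join two $m$-sets of depth-$k$ leaves using the edge-between-$m$-sets hypothesis, then prune) is essentially the proof given there; your pruning step is also checked correctly, since deleting a degree-$1$ vertex $w$ with neighbour $u$ raises the left-hand side of \eqref{extendthing} by one exactly when $w\in N'(U)$ and the right-hand side by one exactly when $u\in U$, and $u\in U$ forces $w\in N'(U)$. The genuine gap is your ``basic engine''. Applying \eqref{extendthing} to $U=\{v\}$ only yields the \emph{existence} of an exterior neighbour $w$ of $v$; it does not yield that $S'+vw$ is $(d,m)$-extendable, and for an arbitrary exterior neighbour this is false. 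Adding $vw$ deletes $w$ from the exterior neighbourhood of every set $U$ with $w\in N'(U)$ and increases $d(v)$ by one, so the condition for $S'+vw$ becomes $|N'(U)\setminus V(S')|-\mathbf{1}_{\{w\in N'(U)\}}\geq (d-1)|U|-\sum_{x\in U\cap V(S')}(d_{S'}(x)-1)-\mathbf{1}_{\{v\in U\}}$; if \eqref{extendthing} is tight for some $U$ with $w\in N'(U)$ and $v\notin U$, this fails. The correct statement --- that \emph{some} exterior neighbour can be chosen so that extendability is preserved --- is precisely the main auxiliary lemma of the extendability method (Corollary 3.6 of \cite{myselfspanningtrees}), and its proof is a genuine argument that uses both the hypothesis that any two disjoint $m$-sets span an edge and the room condition $|S|\leq |G|-10dm$ (roughly, one combines violating sets of the candidate additions into a single set of size at most $2m$ and shows its neighbourhood cannot absorb all candidates). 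By calling this ``implicit in the definition'' you have assumed the hardest step of the lemma; note the asymmetry with your pruning lemma, where deletion only ever helps the inequality.

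A smaller flaw: you cannot give the root $b$ branching factor $d-1$. Extendability caps $\Delta(S)$ at $d$, and $d_S(b)$ may be as large as $d/2$, so $d-1$ new children at $b$ can push its degree above $d$ (e.g.\ $d=6$, $d_S(b)=3$ gives degree $8$), and the leaf-addition lemma is not even applicable there. The standard repair is to give the roots only about $\lceil d/2\rceil$ children and branch $d-1$ thereafter; since $\lceil d/2\rceil (d-1)^{k-1}\geq (d-1)^k/2\geq m$, each tree still has at least $m$ leaves at depth exactly $k$, which is all that Phase 3 requires.
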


We can now prove Lemma~\ref{lem-connect}.

\ifproofdone
\noindent\textbf{Proof of Lemma~\ref{lem-connect} is complete.}
\else
\begin{proof}[Proof of Lemma~\ref{lem-connect}] As in the statement of the lemma, let $V_0\subset [n]$ satisfy $|V_0|\geq n/4$ and let $p=\log n/(10^4n)$. Furthermore, let $d=\log^{1/3} n$ and $m=n/(100d)$.
By Lemma~\ref{findexpander} with $c=10^{-4}$, with probability $1-\exp(-\omega(n))$, $G=G(n,p)$ contains a set $B\subset [n]$ such that the following holds.
\stepcounter{propcounter}
  \begin{enumerate}[label = {\bfseries \Alph{propcounter}\arabic{enumi}}]
    \item $1\leq |B|\leq n\log^{[3]}n/\log n$ and, for any set $U\subset V(G)\setminus B$ with $|U|\leq 2m$, $|N_G(U,V_0\setminus B)|\geq d|U|$.\label{finalprop1}
    \end{enumerate}
    By Proposition~\ref{connsets} with $c=10^{-4}$, with probability $1-\exp(-\omega(n))$, the following holds.
    \begin{enumerate}[label = {\bfseries \Alph{propcounter}\arabic{enumi}}]\stepcounter{enumi}
    \item Any two disjoint sets $U,U'\subset V(G)$ of size at least $m$ have some edge between them in $G$.\label{finalprop2}
  \end{enumerate}
Therefore, there is some $n_0$ such that, for each $n\geq n_0$, with probability at least $1-\exp(-2n)$, $G=G(n,p)$ contains a set $B$ such that \ref{finalprop1} and \ref{finalprop2} hold, and we can assume that $d\geq 12$.

We will now show that $G$ and $B$ have the property in the lemma. Note that, from \ref{finalprop1}, we have the required bounds on $|B|$. Let then $k\geq 1$ and take any integers $\ell_i\geq 10\log n/\log^{[2]}n$, $i\in [k]$, such that $\sum_{i\in [k]}\ell_i\leq n/8$, and distinct vertices $x_1,\ldots,x_k,y_1,\ldots,y_k\in V(G)\setminus (V_0\cup B)$.
Let $S_0$ be the graph with vertex set $V(G)\setminus (V_0\cup B)$ and no edges, and let $G'=G-B$. We will show that $S_0$ is $(d,m)$-extendable in $G'$. First note that $S_0\subset G'$, and $\Delta(S_0)=0$. For each $U\subset V(G')$ with $|U|\leq 2m$, we have
\[
|N_{G'}'(U)\setminus V(S_0)|\geq |N_{G}(U,V_0\setminus B)|\overset{\text{\ref{finalprop1}}}{\geq} d|U|\geq (d-1)|U|-\sum_{x\in U\cap V(S_0)}(d_{S_0}(x)-1),
\]
as required. Thus, $S_0$ is $(d,m)$-extendable in $G'$.

Now, for each $i=1,\ldots,k$ in turn, we can apply Lemma~\ref{connectcor} to find a path $P_i$, such that
\stepcounter{propcounter}
  \begin{enumerate}[label = {\bfseries \Alph{propcounter}\arabic{enumi}}]
\item $P_i$ is an $x_i,y_i$-path in $G'$ with length $\ell_i$ and interior vertices in $V(G')\setminus (\cup_{j=1}^{i-1}V(P_j))$, and \label{firstprop}
\item $S_{i}:=S_0+P_1+\ldots+P_i$ is $(d,m)$-extendable in $G'$.\label{secondprop}
\end{enumerate}
Indeed, suppose that we seek the path $P_i$ for some $i\in [k]$. Then, $S_{i-1}=S_0+P_1+\ldots+P_{i-1}$ is $(d,m)$-extendable in $G'$ by \ref{secondprop} for $i-1$ if $i>1$, or as $S_0$ is $(d,m)$-extendable in $G'$. Now, note that
\begin{align*}
|S_i|&\leq |V(G')\setminus V_0|+\sum_{j=1}^{i-1}\ell_j\leq |G'|- |V_0\setminus B|-\ell_i+\sum_{j\in [k]}\ell_j
\\
&\leq |G'|- \frac{n}{4}+|B|-\ell_i+\sum_{j\in [k]}\ell_j\leq |G'|-\frac{n}{8}+|B|-\ell_i\leq |G'|-10dm-\ell_i.
\end{align*}
Furthermore, $\ell_i\geq 10\log n/\log^{[2]}n\geq 2\lceil \log(2m)/\log(d-1)\rceil +1$. Finally, note that, as $x_1,\ldots,x_k,y_1,\ldots,y_k$ are distinct, by \ref{firstprop}, $x_i$ and $y_i$ have degree $0\leq d/2$ in $S_{i-1}$. Therefore, by Lemma~\ref{connectcor} and \ref{finalprop2}, there is an $x_i,y_i$-path $P_i$ with length $\ell_i$ and interior vertices in $V(G')\setminus V(S_{i-1})=V_0\setminus (B\cup (\cup_{j<i}V(P_j)))$ and such that $S_i:=S_{i-1}+P_i$ is $(d,m)$-extendable. Thus, \ref{firstprop} and \ref{secondprop} are satisfied for $i$.

Suppose then we have paths $P_i$, $i\in [k]$, satisfying \ref{firstprop} and \ref{secondprop}. By \ref{firstprop}, for each $i\in [k]$, $P_i$ is an $x_i,y_i$-path with length $\ell_i$, and the paths $P_i$, $i\in [k]$, are internally vertex-disjoint. Furthermore, for each $i\in [k]$, the internal vertices of $P_i$ are in $V(G')\setminus V(S_{i-1})\subset V(G')\setminus V(S_0)=V_0\setminus B$.

Let $V_1=V_0\setminus (B\cup (\cup_{j\in[k]}V(P_j)) )=V_0\setminus (B\cup V(S_k))$. To show that the paths $P_i$, $i\in [k]$, have the property in the lemma, it is left only to show that, for  any $A\subset V(G)\setminus B$
with $V_1\subset A$ and $A\cap B=\emptyset$, $G[A]$ is a 10-expander. Take then such a set $A$, and let $H:=G[A]\subset G'$.

For each $U\subset V(H)$ with $0<|U|\leq m$, by \ref{secondprop} for $i=k$, as $U\subset V(H)\subset V(G')$ and $V(S_k)=(V(G)\setminus (V_0\cup B))\cup (\cup_{j\in [k]}V(P_j))=V(G')\setminus V_1$, we have
\begin{align}
|N_{H}(U)|&\geq |N'_H(U)|-|U|=|N'_{G'}(U)\cap A|-|U|\geq |N'_{G'}(U,V_1)|-|U|= |N'_{G'}(U)\setminus V(S_k)|-|U|\nonumber\\
&\geq (d-1)|U|-|U|\geq (d-2)|U|\geq 10|U|.\label{thiii}
\end{align}
For each $U\subset V(H)$ with $m<|U|\leq |H|/20$, by \ref{finalprop2} we have that $|V(H)\setminus (U\cup N_H(U))|\leq m$. Therefore, using that $|U|\leq |H|/20$, we have
\[
|N_{H}(U)|\geq |H|-|U|-m\geq |H|-2|U|\geq 10|U|,
\]
so that $|N_H(U)|\geq 10|U|$ for each $U\subset V(H)$ with $|U|\leq |H|/20$.
Finally, by \eqref{thiii}, any connected component of $H$ must contain more than $m$ vertices. Therefore, by \ref{finalprop2}, $H$ has 1 connected component. Thus, $H$ is connected, and hence an $10$-expander, as required.
\end{proof}
\fi

%%%%%%%%%%%%%%%%%%%%%%%%%%%%%%%%%%%%%%%%%%%%%%%%%%%%%%%%%%%%%%%%%%%%%%%%%%%%%%%%%%%%%%%%%%%%%%%%%%%%%%%%%%%%%%%%%%%%%%%%%%%%%%%%%%
%%%%%%%%%%%%%%%%%%%%%%%%%%%%%%%%%%%%%%%%%%%%%%%%%%%%%%%%%%%%%%%%%%%%%%%%%%%%%%%%%%%%%%%%%%%%%%%%%%%%%%%%%%%%%%%%%%%%%%%%%%%%%%%%%%
%%%%%%%%%%%%%%%%%%%%%%%%%%%%%%%%%%%%%%%%%%%%%%%%%%%%%%%%%%%%%%%%%%%%%%%%%%%%%%%%%%%%%%%%%%%%%%%%%%%%%%%%%%%%%%%%%%%%%%%%%%%%%%%%%%
%%%%%%%%%%%%%%%%%%%%%%%%%%%%%%%%%%%%%%%%%%%%%%%%%%%%%%%%%%%%%%%%%%%%%%%%%%%%%%%%%%%%%%%%%%%%%%%%%%%%%%%%%%%%%%%%%%%%%%%%%%%%%%%%%%
%%%%%%%%%%%%%%%%%%%%%%%%%%%%%%%%%%%%%%%%%%%%%%%%%%%%%%%%%%%%%%%%%%%%%%%%%%%%%%%%%%%%%%%%%%%%%%%%%%%%%%%%%%%%%%%%%%%%%%%%%%%%%%%%%%
%%%%%%%%%%%%%%%%%%%%%%%%%%%%%%%%%%%%%%%%%%%%%%%%%%%%%%%%%%%%%%%%%%%%%%%%%%%%%%%%%%%%%%%%%%%%%%%%%%%%%%%%%%%%%%%%%%%%%%%%%%%%%%%%%%
%%%%%%%%%%%%%%%%%%%%%%%%%%%%%%%%%%%%%%%%%%%%%%%%%%%%%%%%%%%%%%%%%%%%%%%%%%%%%%%%%%%%%%%%%%%%%%%%%%%%%%%%%%%%%%%%%%%%%%%%%%%%%%%%%%

\subsection{Covering vertices with a path}\label{subsec:cover}

We now prove Lemma~\ref{lem-cover}. In a pseudorandom digraph $D$, this allows us to use sections of the cycle $C$ to cover the exceptional set $X$ as well as a further set $B$ of vertices corresponding to the set $B$ found in Lemma~\ref{lem-connect}. To prove Lemma~\ref{lem-cover}, we identify for some $r$ a sequence $B_0\subset B_1\subset B_2\subset \ldots \subset B_r=B$ of subsets of $B$, such that, roughly speaking, the later a vertex first appears in a set in the sequence, the harder it is to cover with paths in $D$.
We then embed paths covering $X$ while using vertices from $B$, before, iteratively, for each $i$ from $r-1$ to $1$, covering the unused vertices from $B\setminus B_i$ while using unused vertices in $B_{i}$ if necessary. This ensures that we cover vertices which are more difficult to cover first.

\ifproofdone

\bigskip

\noindent\textbf{Proof of Lemma~\ref{lem-cover} is complete.}
\else

\begin{proof}[Proof of Lemma~\ref{lem-cover}] Following the lemma statement, let $D$ be an $n$-vertex pseudorandom graph with exceptional set $X$, let $B^-,B^+,A^+,A^-\subset V(D)\setminus X$ be disjoint, such that $X$, $B^+$ and $B^-$ each have size at most $n\log^{[3]}n/\log n$, and such that, for each $v\in V(D)$ and $\diamond\in \{+,-\}$, we have $d^\diamond(v,B^\diamond\cup A^\diamond)\geq \log n/5000$. Let $B=B^+\cup B^-$ and let $P_i$, $i\in [k]$, be vertex-disjoint oriented paths with length $2\lceil 4\log n/\log\log n\rceil$, where $k=|X\cup B|$ and, for each $i\in [k]$, $x_i$ is the midpoint of $P_i$. Let $f:X\cup B\to [k]$ be a bijection.

Let $d=\log n/10^4$ and let $B_0^+=B_0^-=\emptyset$. Iteratively, for each integer $i\geq 1$ and $\diamond\in \{+,-\}$, let
\begin{equation}\label{calm}
B^\diamond_i=\{v\in B^\diamond:d^+(v,B^+_{i-1}\cup A^+)\geq d\;\text{ and }\;d^-(v,B^-_{i-1}\cup A^-)\geq d\}.
\end{equation}
This gives a sequence $B_0^+\subset B_1^+\subset \ldots$ of subsets of $B^+$ and a sequence $B_0^-\subset B_1^-\subset \ldots$ of subsets of $B^-$. For each $i\geq 0$, let $B_i=B_i^+\cup B_i^-$. We will show the following claim.

\begin{claim}\label{decreasingaway}
For each $i\geq 0$, $|B\setminus B_i|\leq |B|/(\log n)^{i/3}$.
\end{claim}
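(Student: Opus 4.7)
The plan is to prove Claim~\ref{decreasingaway} by induction on $i$. The base case $i=0$ is immediate since $B_0 = \emptyset$ gives $|B \setminus B_0| = |B|$. For the inductive step, I would unpack the definition \eqref{calm} of $B^\diamond_{i+1}$ by setting, for $\diamond \in \{+,-\}$,
\[
S^\diamond_{i+1} := \{v \in B : d^\diamond(v, B^\diamond_i \cup A^\diamond) < d\},
\]
so that $B \setminus B_{i+1} \subseteq S^+_{i+1} \cup S^-_{i+1}$ (a vertex fails to lie in $B_{i+1}$ exactly when it violates at least one of the two conditions in \eqref{calm}, regardless of whether it sits in $B^+$ or $B^-$).

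Next I would use the hypothesis of Lemma~\ref{lem-cover} giving $d^\diamond(v, B^\diamond \cup A^\diamond) \geq \log n/5000 = 2d$ for every $v \in V(D)$. Since $B^\diamond \cup A^\diamond$ is the disjoint union of $B^\diamond_i \cup A^\diamond$ and $B^\diamond \setminus B^\diamond_i$, any $v \in S^\diamond_{i+1}$ satisfies
\[
d^\diamond(v, B^\diamond \setminus B^\diamond_i) \geq 2d - d = d \geq (\log n)^{2/3}.
\]
Since $|S^\diamond_{i+1}| \leq |B| \leq 2n\log^{[3]}n/\log n \leq n\log\log n/\log n$, the expansion axiom~\ref{pseudo2} in the definition of pseudorandomness applies with the sets $S^\diamond_{i+1}$ and $B^\diamond \setminus B^\diamond_i$, yielding
\[
|S^\diamond_{i+1}| \leq \frac{|B^\diamond \setminus B^\diamond_i|}{(\log n)^{1/3}}.
\]

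Summing over $\diamond \in \{+,-\}$ and applying the induction hypothesis then closes the induction:
\[
|B \setminus B_{i+1}| \leq |S^+_{i+1}| + |S^-_{i+1}| \leq \frac{|B^+ \setminus B^+_i| + |B^- \setminus B^-_i|}{(\log n)^{1/3}} = \frac{|B \setminus B_i|}{(\log n)^{1/3}} \leq \frac{|B|}{(\log n)^{(i+1)/3}}.
\]

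There is no real obstacle here; the argument is essentially one application of the expansion axiom per step. The only care required is in checking that \ref{pseudo2} can legally be invoked, which amounts to the a priori size bound $|S^\diamond_{i+1}| \leq |B| \leq 2n\log^{[3]}n/\log n$ guaranteed by the hypothesis of Lemma~\ref{lem-cover}, and in verifying $d \geq (\log n)^{2/3}$, which holds for $n$ large since $d = \log n/10^4$.
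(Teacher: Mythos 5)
Your argument is correct and follows the same route as the paper's proof: decompose $B\setminus B_{i}$ into the two sets of vertices failing the $+$ or $-$ degree condition in \eqref{calm}, note each such vertex has at least $(\log n)^{2/3}$ $\diamond$-neighbours in $B^\diamond\setminus B^\diamond_{i-1}$ because the $2d$-degree hypothesis and the $<d$ failure condition leave at least $d$ neighbours there, then apply \ref{pseudo2} and sum. The only cosmetic differences are your notation ($S^\diamond_{i+1}$ for the paper's $Z^\diamond_i$) and that you write the inclusion rather than the equality $Z_i^+\cup Z_i^-=B\setminus B_i$, which is all that is needed.
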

\begin{proof}[Proof of Claim~\ref{decreasingaway}] Note that this is true for $i=0$. We will prove this by induction on $i$, so suppose that $i>0$ and that it is true for $i-1$.
For each $\diamond\in\{+,-\}$, let $Z_i^\diamond$ be the set of vertices $v\in B$ such that $d^\diamond(v,B_{i-1}^\diamond\cup A^\diamond)<d$. Observe that, by \eqref{calm}, $Z_i^+\cup Z_i^-=B\setminus B_i$. Furthermore, for each $\diamond\in \{+,-\}$, every vertex $v\in Z_i^\diamond$ has $d^\diamond(v,B^\diamond\cup A^\diamond)\geq \log n/5000=2d$ but $d^\diamond(v,B_{i-1}^\diamond\cup A^\diamond)<d$. Therefore, for each $v\in Z_i^\diamond$, we have $d^\diamond(v,B^\diamond\setminus B_{i-1}^\diamond)>d\geq (\log n)^{2/3}$. Therefore, by \ref{pseudo2} in the definition of pseudorandomness,
as $|Z_i^\diamond|\leq |B|\leq 2n\log^{[3]}n/\log n$, we have $|B^{\diamond}\setminus B_{i-1}^\diamond|\geq |Z_i^\diamond|(\log n)^{1/3}$.
Therefore,
\begin{align*}
|B\setminus B_i|&=|Z_i^+\cup Z_i^-|\leq |Z_i^+|+|Z_i^-|\leq (|B^+ \setminus B^+_{i-1}|+|B^-\setminus B_{i-1}^-|)/(\log n)^{1/3}\\
&=|B\setminus B_{i-1}|/(\log n)^{1/3}\leq |B|/({\log n})^{i/3}.
\end{align*}
This completes the inductive step, and hence the proof of the claim.
\renewcommand{\qedsymbol}{$\boxdot$}
\end{proof}
\renewcommand{\qedsymbol}{$\square$}
%\end{proof}

By Claim~\ref{decreasingaway}, if $i\geq 3(\log n/\log\log n)+1$, then $B\setminus B_i=\emptyset$. Let then $r\leq 4\log n/\log\log n$ be the smallest integer such that $B\setminus B_r=\emptyset$, and note that $B^+_{r}=B^+$ and $B^-_{r}=B^-$.
For each $\diamond\in\{+,-\}$, let $H^\diamond$ be the bipartite auxiliary (undirected) graph with vertex classes a copy of $X\cup B$ and a disjoint copy of $B^\diamond\cup A^\diamond$, with an edge $xy$ between $x\in X\cup B$ and $y\in B^\diamond\cup A^\diamond$ if, for some $i\in \{0,1,\ldots,r\}$, $x\in X\cup (B\setminus B_i^\diamond)$ and $y\in B_i^\diamond\cup A^\diamond$, and $y\in N^\diamond_D(x)$.

\begin{claim}\label{forhall} For each $\diamond\in \{+,-\}$ and $U\subset X\cup B$, we have $|N_{H^\diamond}(U)|\geq 2|U|$.
\end{claim}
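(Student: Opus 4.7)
The plan is to prove this expansion claim by a direct appeal to property \ref{pseudo2} of the pseudorandom digraph $D$, the observation being that every $x\in U$ has many $\diamond$-neighbours in $D$ that already lie in $N_{H^\diamond}(U)$. Since $N_{H^\diamond}(U)\subset B^\diamond\cup A^\diamond\subset V(D)$ and its elements arise as $\diamond$-neighbours in $D$ of members of $U$, the pseudorandom expansion condition will translate directly into the bipartite expansion we want.

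Fix $\diamond\in\{+,-\}$ and a nonempty $U\subset X\cup B$ (the case $U=\emptyset$ is trivial), and write $N:=N_{H^\diamond}(U)\subset B^\diamond\cup A^\diamond$. First I would unpack the definition of $H^\diamond$ case by case on $x\in U$. If $x\in X$, then the condition $x\in X\cup(B\setminus B_i^\diamond)$ holds for every $i$, so choosing $i=r$ (recall $B_r^\diamond=B^\diamond$) shows $N_{H^\diamond}(x)=N_D^\diamond(x)\cap(B^\diamond\cup A^\diamond)$, which by the hypothesis of the lemma has size at least $\log n/5000$. Similarly, if $x\in B^{-\diamond}$ then $x\notin B_i^\diamond$ for every $i$, and taking $i=r$ again yields $N_{H^\diamond}(x)=N_D^\diamond(x)\cap(B^\diamond\cup A^\diamond)$, of size at least $\log n/5000$. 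Finally, if $x\in B^\diamond$, let $j\geq 1$ be the smallest index with $x\in B_j^\diamond$; then $x\in B\setminus B_i^\diamond$ for each $i<j$, so taking $i=j-1$ and using the definition \eqref{calm} of $B_j^\diamond$ gives $N_{H^\diamond}(x)\supset N_D^\diamond(x)\cap(B_{j-1}^\diamond\cup A^\diamond)$, which has size at least $d=\log n/10^4$.

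In all three cases $N_{H^\diamond}(x)\subset N$ and every element of $N_{H^\diamond}(x)$ is a $\diamond$-neighbour of $x$ in $D$, so every $x\in U$ satisfies $d_D^\diamond(x,N)\geq d\geq(\log n)^{2/3}$ for $n$ sufficiently large. To finish, I would apply \ref{pseudo2} with the sets $A:=U$ and $B:=N$: since $|U|\leq|X\cup B|\leq |X|+|B^+|+|B^-|\leq 3n\log^{[3]}n/\log n\leq n\log^{[2]}n/\log n$, the size hypothesis of \ref{pseudo2} is satisfied, and we conclude $|N|\geq|U|(\log n)^{1/3}\geq 2|U|$ for all sufficiently large $n$.

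There is no serious obstacle here; the argument is essentially a bookkeeping exercise checking that the edges of $H^\diamond$ incident to each type of $x\in U$ correspond to $\diamond$-edges of $D$ into $N$. The substantive work has already been done in constructing the filtration $\emptyset=B_0^\diamond\subset B_1^\diamond\subset\cdots\subset B_r^\diamond=B^\diamond$ via \eqref{calm}, which was designed precisely so that even vertices in $B^\diamond$ (which are ``hard'' because their $\diamond$-neighbourhoods may be dominated by other vertices of $B^\diamond$) still have at least $d$ neighbours in a sufficiently small layer $B_{j-1}^\diamond\cup A^\diamond$.
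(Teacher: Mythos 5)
Your proof is correct and follows essentially the same strategy as the paper's: establish $d_{H^\diamond}(x)\geq d$ for each $x\in U$ using the filtration $B_0^\diamond\subset\cdots\subset B_r^\diamond$, note these $H^\diamond$-neighbours are $\diamond$-neighbours in $D$, then invoke \ref{pseudo2}. The only (harmless) difference is that you split $x\in B$ into the sub-cases $x\in B^{-\diamond}$ and $x\in B^\diamond$, whereas the paper treats $x\in B$ uniformly via the smallest $i$ with $x\in B_i^+\cup B_i^-$; both yield the needed bound.
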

\begin{proof}[Proof of Claim~\ref{forhall}] Let $\diamond\in \{+,-\}$ and $U\subset X\cup B$. Suppose $x\in U$. If $x\in U\cap X$, then, as $B^\diamond_r=B^\diamond$, for each $y\in (B^\diamond\cup A^\diamond)\cap N^\diamond_D(x)$, $xy\in E(H^\diamond)$. Thus, we have $d_{H^\diamond}(x)\geq \log n/5000>d$.

On the other hand, if $x\in U\cap B$, then let $i$ be the smallest $i\in [r]$ with $x\in B^+_i\cup B^-_i$. Then, by the choice of $B^+_i$ and $B^-_i$, we have $d^\diamond_D(x,B_{i-1}^\diamond\cup A^\diamond)\geq d$.  Thus, we have $d_{H^\diamond}(x)\geq d$.

Therefore, $d_{H^\diamond}(x)\geq d$ for each $x\in U$. Let $V=N_{H^\diamond}(U)$, and let $U'$ and $V'$ be the vertex sets in $D$ which correspond to $U$ and $V$. We have that $d_D^\diamond(x,V')\geq d$ for each $x\in U'$, and therefore, by \ref{pseudo2}, as $|U|\leq |X\cup B|\leq 3n\log^{[3]}n/\log n$, we have that
\[
|N_{H^\diamond}(U)|=|V|=|V'|\geq |U'|(\log n)^{1/3}\geq 2|U'|=2|U|,
\]
as required.
\renewcommand{\qedsymbol}{$\boxdot$}
\end{proof}
\renewcommand{\qedsymbol}{$\square$}

Therefore, for each $\diamond\in\{+,-\}$, the appropriate Hall's matching criterion holds by Claim~\ref{forhall} to show that there exist functions $g^\diamond_1,g^\diamond_2:X\cup B\to B^\diamond\cup A^\diamond$ so that $vg_i(v)\in E(H^\diamond)$ for each $v\in X\cup B$ and $i\in [2]$, and $g^\diamond_1(v),g^\diamond_2(v)$, $v\in X\cup B$, are all distinct vertices in $B^\diamond\cup A^\diamond$.

For each $v\in X\cup B$ we now find a path $Q_v$ covering $v$, in which $f(x)$ is copied to $x$ and then, moving in either direction on $Q_v$, the vertices appear earlier and earlier in the sequence $A^+\cup A^-\cup B_0, B_1,\ldots, B_r$. To do this, for each $v\in X\cup B$, let $Q_v\subset D$ be a longest path satisfying the following properties.
\stepcounter{propcounter}
\begin{enumerate}[label = {\bfseries \Alph{propcounter}\arabic{enumi}}]
  \item $Q_v$ is a copy of a portion of $P_{f(v)}$ containing $x_{f(v)}$ in which $x_{f(v)}$ is copied to $v$.\label{pointone}
  \item Each interior vertex of $Q_v$ is in $X\cup B$, and the endvertices of $Q_v$ are in $X\cup B\cup A^+\cup A^-$.\label{pointtwo}
  \item For each $u\in V(Q_v)$ and $\diamond\in \{+,-\}$, if $w$ is a $\diamond$-neighbour of $u$ in $Q_v$ which lies further from $v$ than $u$ on the underlying undirected path of $Q_v$ (if such a $w$ exists), then $w\in \{g^\diamond_1(u),g^\diamond_2(u)\}$.\label{pointthree}
\end{enumerate}
Note that the path $Q_v$ consisting solely of the vertex $v$ satisfies these conditions, so such a path $Q_v$ does exist.

We now pick a subcollection of these paths which are disjoint.  To do this, iteratively, for each $i=r,\ldots,1$, let
\[
\bar{B}_i=\{v\in B_{i}\setminus B_{i-1}:v\notin V(Q_{u})\text{ for each }u\in X\cup \bar{B}_{r}\cup \bar{B}_{r-1}\cup \ldots \cup \bar{B}_{i+1}\}.
\]
Let $\bar{B}=\bar{B}_r\cup\ldots\cup \bar{B}_1$. We will show that the paths $Q_v$, $v\in X\cup \bar{B}$, satisfy the conditions in the lemma. That is, that they are vertex-disjoint and satisfy \emph{\ref{new1}}--\emph{\ref{new2}}.

Note first that, by the choice of $\bar{B}$, the paths $Q_v$, $v\in X\cup \bar{B}$, contain every vertex in $X\cup B$, and thus \emph{\ref{new2}} holds. As \ref{pointone} holds, to show that \emph{\ref{new1}} holds it is sufficient to show that, for each $v\in X\cup B$, the endvertices of $Q_v$ are in $A^+\cup A^-$. Therefore, to complete the proof of the lemma we need only show the following two claims.

\begin{claim}\label{sun1}
For each $v\in X\cup B$, the endvertices of $Q_v$ are in $A^+\cup A^-$.
  \end{claim}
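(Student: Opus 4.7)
The plan is to argue by contradiction: suppose $Q_v$ has an endvertex $w\in X\cup B$, and produce a strictly longer valid path, violating maximality. We may assume $w\neq v$: if $w=v$, so that $Q_v=\{v\}$, then extending by $g_1^+(v)$ or $g_1^-(v)$ (whichever matches the direction at $x_{f(v)}$ along $P_{f(v)}$) gives a length-one path satisfying \ref{pointone}--\ref{pointthree}, already contradicting maximality. For $w\neq v$, let $u$ be the neighbour of $w$ on $Q_v$; by \ref{pointthree}, $w=g_i^{\diamond}(u)$ for some $i\in[2]$ and $\diamond\in\{+,-\}$, so $w\in B^{\diamond}\cup A^{\diamond}\subset V(D)\setminus X$, and combined with $w\in X\cup B$ this forces $w\in B^{\diamond}$.

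The main step uses the distinctness of the $g$-values: the images $\{g_j^\eta(u):u\in X\cup B,\ j\in[2],\ \eta\in\{+,-\}\}$ are pairwise distinct by construction. Let $\diamond'\in\{+,-\}$ be the direction in $P_{f(v)}$ of the next edge past the preimage of $w$, when such an edge exists. By \ref{pointthree}, any valid extension vertex $w^*$ must lie in $\{g_1^{\diamond'}(w),g_2^{\diamond'}(w)\}$. I claim that at most one of these two candidates can lie in $V(Q_v)\setminus\{v\}$: if $y:=g_j^{\diamond'}(w)$ were such a vertex, then by \ref{pointthree} applied at $y$'s predecessor $z$ on $Q_v$, we also have $y=g_{j_z}^{\eta_z}(z)$, and distinctness of the $g$-values forces $z=w$, making $y$ a successor of $w$ on $Q_v$, which is impossible since $w$ is an endvertex. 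Since at most one of $g_1^{\diamond'}(w),g_2^{\diamond'}(w)$ can also equal $v$, at least one lies in $V(D)\setminus V(Q_v)$, yielding a vertex $w^*$ whose appending to $Q_v$ produces a valid strictly longer path, contradicting maximality.

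The only remaining case --- the main obstacle --- is when $w$ corresponds to an endvertex of $P_{f(v)}$, so no edge of $P_{f(v)}$ past $w$'s preimage is available to extend along. The plan here is to exploit the level structure of the $B_i^{\diamond}$: the definition of the edges of $H^{\diamond}$ forces that within any direction-run of $Q_v$ whose vertices lie in $B^{\diamond}$, the $\diamond$-levels strictly decrease, so such a run has length at most $r-1$ when it ends at a vertex of $B^{\diamond}$. Combining this with the strict inequality $r\leq 3\log n/\log^{[2]}n+1 < \lceil 4\log n/\log^{[2]}n\rceil=L$ (the half-length of $P_{f(v)}$) that follows from the proof of Claim~\ref{decreasingaway}, together with a local swap of the last edge $u\to w$ for $u\to g_{3-i}^{\diamond}(u)$ whenever this alternative target lies in $A^{\diamond}$, will show that either the full-copy configuration with $w\in B$ is incompatible with the run-length accounting, or among the longest valid paths there is one whose endvertices lie in $A^+\cup A^-$, which we then take as $Q_v$.
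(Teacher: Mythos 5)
Your first two paragraphs are essentially sound and mirror the mechanism the paper captures in property \ref{keykey2}: you re-derive, from the pairwise disjointness of the $g$-images, that no vertex in $\{g_1^{\diamond'}(w),g_2^{\diamond'}(w)\}$ can appear on $Q_v\setminus\{v\}$, and hence at least one of the two candidates is free to use as an extension. The paper instead invokes \ref{keykey2} directly for this step, but the content is the same.

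The genuine gap is in your third paragraph. The paper's argument \emph{precedes} the extension step by establishing, via \ref{keykey}, that the levels $i_u$ strictly decrease for \emph{every} pair of $X\cup B$-vertices on the arm from $v$ to $w$, not merely within direction-runs lying in a single $B^\diamond$. This gives at most $r-1$ vertices strictly between $v$ and $w$, so the copied portion of $P_{f(v)}$ from $x_{f(v)}$ to the preimage of $w$ has fewer than $\lceil 4\log n/\log\log n\rceil$ edges, i.e.\ $w$'s preimage is never an endvertex of $P_{f(v)}$, and the case you call ``the main obstacle'' is vacuous. Your weaker run-local decrease claim does not control what happens when the direction of $P_{f(v)}$ flips (so that the next step is a $\diamond$-step from a vertex in $B^{\bar\diamond}$), and so it does not yield the required bound on the length of the $v$-to-$w$ segment. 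Beyond that, the ``local swap'' $u\to w \rightsquigarrow u\to g_{3-i}^\diamond(u)$ is asserted without checking that the resulting path still satisfies \ref{pointone}--\ref{pointthree} at $w$'s replacement and beyond, and the closing ``take a different longest valid $Q_v$'' move is also unsubstantiated: you have not shown that among the maximal-length choices there is one with endvertices in $A^+\cup A^-$. The fix is to prove the global strict decrease along $Q_v$ (the paper's \ref{keykey}), after which the ``remaining case'' disappears entirely.
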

  \begin{claim}\label{sun2}
The paths $Q_v$, $v\in X\cup \bar{B}$, are vertex-disjoint.
  \end{claim}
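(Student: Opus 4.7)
Plan for Claim~\ref{sun1}: I would argue by contradiction via the maximality of $Q_v$. Suppose the right endpoint $u$ of $Q_v$ lies in $X\cup B$ (the left endpoint case is symmetric). I aim to extend $Q_v$ by one more vertex on the right, contradicting its maximality. Let $\diamond$ be the direction in $P_{f(v)}$ of the edge immediately past $u$'s preimage. A valid extension appends some $w\in\{g^\diamond_1(u),g^\diamond_2(u)\}\setminus V(Q_v)$. The structural observation is that at most one of $g^\diamond_1(u),g^\diamond_2(u)$ can already lie on $V(Q_v)$: if $g^\diamond_j(u)=x_k$ for some $k\geq 1$, then \ref{pointthree} gives $x_k=g^{\diamond_k}_{j_k}(x_{k-1})$, and the disjointness of $B^+\cup A^+$ and $B^-\cup A^-$ forces $\diamond_k=\diamond$, whereupon the distinctness of $\{g^\diamond_i(y):y\in X\cup B,\,i\in\{1,2\}\}$ forces $u=x_{k-1}\ne x_\ell$, a contradiction. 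Hence $g^\diamond_j(u)\in V(Q_v)$ only if $g^\diamond_j(u)=v=x_0$, which happens for at most one $j$. The only subsidiary case is when $u$'s preimage is already the extreme endpoint of $P_{f(v)}$, so no further extension in $P_{f(v)}$ exists; this I would rule out via a level argument based on the strict decrease of $\text{lev}^\diamond$ along direction-$\diamond$ blocks. Since the half-length $\lceil 4\log n/\log\log n\rceil$ of $P_{f(v)}$ exceeds $r+1$, while any interior vertex of $Q_v$ reaching $A^+\cup A^-$ would violate \ref{pointthree} (so such a vertex could only appear at an endpoint), the path $Q_v$ must in fact have already terminated in $A^+\cup A^-$.

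Plan for Claim~\ref{sun2}: For distinct $v,v'\in X\cup \bar{B}$, suppose $w\in V(Q_v)\cap V(Q_{v'})$, written as $w=x_k=x'_{k'}$ along the two paths. If $k=0$ or $k'=0$, one of $v,v'$ directly lies on the other's path. Otherwise, I repeat the injectivity back-tracing from Claim~\ref{sun1}: disjointness of $B^\pm\cup A^\pm$ forces $\diamond_k=\diamond'_{k'}$, and then injectivity of $g^{\diamond_k}$ yields $x_{k-1}=x'_{k'-1}$; iterating gives $v=x'_{k'-k}\in V(Q_{v'})$ when $k'\geq k$, or $v'=x_{k-k'}\in V(Q_v)$ when $k\geq k'$. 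Without loss of generality assume $v'\in V(Q_v)$, with $v\in X\cup\bar{B}_j$ and $v'\in\bar{B}_i$. If $j>i$, the selection rule defining $\bar{B}_i$ directly rules out $v'\in V(Q_v)$, a contradiction. The remaining configurations (either $j=i$, or the symmetric case $v\in V(Q_{v'})$) I would close with an auxiliary level argument: an interior vertex of $Q_u$ for $u\in \bar{B}_i$ must have strictly smaller $B$-level than $u$ itself in the combined ordering induced by $\min(\text{lev}^+,\text{lev}^-)$, which rules out the appearance of a level-$\geq i$ vertex as an interior vertex of $Q_{v'}$ with $v'\in\bar{B}_i$.

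The main obstacle is the delicate interplay between the two direction-specific level hierarchies $\{B^+_i\}$ and $\{B^-_i\}$: each $g^\diamond$-step strictly decreases $\text{lev}^\diamond$, but an opposite-direction edge can reset that level to $r+1$, so potential functions across directions are not cleanly monotone. The injectivity of $g^+$ and $g^-$ individually is what powers the back-tracing argument in both claims; closing the same-level case in Claim~\ref{sun2} is the most delicate step, since the iterative selection of $\bar{B}_i$ alone does not preclude same-level conflicts and one must exploit the finer level structure to do so.
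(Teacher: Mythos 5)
Your proposal takes essentially the same route as the paper's proof. The back-tracing via injectivity of $(w,i)\mapsto g^\diamond_i(w)$ and disjointness of $B^+\cup A^+$ from $B^-\cup A^-$ is the paper's combination of Property~\ref{keykey2} with the choice of a shared vertex closest to one of the two centres, and the concluding step (the $\bar{B}_i$-selection rule together with a level comparison) is exactly the paper's use of the definition of $\bar{B}_i$ and Property~\ref{keykey}; the ``auxiliary level argument'' you worry about is precisely~\ref{keykey}, which the paper gets for free from the fact that every step of $Q_v$ away from $v$ is a $g^\diamond$-edge, hence an edge of $H^\diamond$, and such edges strictly decrease $i_w$ by the way $H^\diamond$ is defined --- so your concern about cross-direction monotonicity is handled by construction rather than by a separate potential-function argument.
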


Before proving these claims, we will deduce two properties, \ref{keykey} and \ref{keykey2} that we require.
For each vertex $v\in X\cup B$, let $j_v$ be the largest integer $i\in \{0,1,\ldots,r\}$ such that $v\in X\cup (B\setminus B_i)$. Note that, if $v\in X\cup B$, $\diamond\in \{+,-\}$, and $x,y\in V(Q_v)$, and $x$ is closer to $v$ on $Q_v$ than $y$ (and possibly even $x=v$) and $y$ is a $\diamond$-neighbour of $x$ on $Q_v$, then, by  \ref{pointthree}, $y\in \{g_1^\diamond(x),g_2^\diamond(x)\}$, so that $xy\in E(H^\diamond)$. Thus, by the definition of $H^\diamond$, if, in addition, $x,y\in X\cup B$, then $j_y<j_x$. Thus, we have the following two properties.

\stepcounter{propcounter}
\begin{enumerate}[label = {\bfseries{\Alph{propcounter}\arabic{enumi}}}]
\item If $v\in X\cup B$, and $x,y\in (X\cup B)\cap V(Q_v)$, are such that $x$ is closer to $v$ on $Q_v$ than $y$, then $j_y<j_x$.\label{keykey}
\item If $v\in X\cup B$, and $x\in (X\cup B)\cap V(Q_v)$, then any vertex on $Q_v$ in $\{g_1^+(x),g_2^+(x),g_1^-(x),g_2^-(x)\}$ is a neighbour of $x$ on $Q_v$ which is further than $v$ from $x$ on $Q_v$, and every such neighbour must be in $\{g_1^+(x),g_2^+(x),g_1^-(x),g_2^-(x)\}$.\label{keykey2}
\end{enumerate}

We now prove Claim~\ref{sun1} and~\ref{sun2}.

  \begin{proof}[Proof of Claim~\ref{sun1}]
 Suppose, to the contrary, that there is some $v\in X\cup B$ such that $Q_v$ has some endvertex, $w$ say, which is not in $A^+\cup A^-$.   Let $P'_{f(v)}\subset P_{f(v)}$ be the subpath of $P_{f(v)}$ of which $Q_v$ is a copy, and let its endvertex of which $w$ is a copy be $w'$. Note that, by \ref{keykey}, there are at most $r-1$ vertices between $v$ and $w$ on $Q_v$. Thus, we can pick $x'\in V(P_{f(v)})\setminus V(P'_{f(v)})$ and $\diamond\in \{+,-\}$ be such that $x'$ is a $\diamond$-neighbour of $w$ on $P_{f(v)}$.
 By \ref{keykey2}, $Q_v$ contains at most one vertex in $\{g_1^+(w),g_2^+(w),g_1^-(w),g_2^-(w)\}\subset B\cup A^+\cup A^-$ (and no such vertex if $w\neq v$). Therefore, we can pick $x\in \{g_1^\diamond(w),g_2^\diamond(w)\}\setminus V(Q_v)$. Noting that $Q'_v=Q_v+wx$ satisfies \ref{pointone}--\ref{pointthree} in place of $Q_v$ contradicts the maximality of $Q_v$.
\end{proof}

%\smallskip

\begin{proof}[Proof of Claim~\ref{sun2}]
%\noindent\emph{Proof of Claim~\ref{sun2}}.
Suppose, to the contrary, that there are distinct $u,v\in X\cup \bar{B}$, and a vertex $x\in V(Q_u)\cap V(Q_v)$. Assume further that $x$ is as close to $u$ as possible on $Q_u$ subject to $x\in V(Q_u)\cap V(Q_v)$.

Suppose first that $u\neq x$ and $v\neq x$. By \ref{keykey2}, there is some $x_u\in V(Q_u)$ which is closer to $x$ on $Q_u$ than $x$ is and for which $x\in \{g_1^+(x_u),g_2^+(x_u),g_1^-(x_u),g_2^-(x_u)\}$. Similarly, there is some $x_v\in V(Q_v)$ which is closer to $v$ on $Q_v$ than $x$ is and for which $x\in \{g_1^+(x_v),g_2^+(x_v),g_1^-(x_v),g_2^-(x_v)\}$. As the sets $\{g_1^+(w),g_2^+(w),g_1^-(w),g_2^-(w)\}$, $w\in X\cup B$, are disjoint, we have $x_v=x_u$, contradicting the assumption on $x$.

Therefore, by swapping the labels of $u$ and $v$ if necessary, we can assume, as $u$ and $v$ are distinct that $u=x$ and $v\neq x$. By the choice of $\bar{B}_{j_u}$, as $u,v\in \bar{B}$ and $u\in V(Q_v)$, we must have that $j_v\leq j_u$. However, by \ref{keykey}, as $v$ is closer to $v$ on $Q_v$ than $u\in V(Q_v)\setminus \{v\}$ is, we have $j_u<j_v$, a contradiction. This completes the proof of the claim, and hence the lemma.
\renewcommand{\qedsymbol}{$\boxdot$}
\qedhere
\renewcommand{\qedsymbol}{$\square$}
\qedsymbol
\end{proof}
\renewcommand{\qedsymbol}{}
\end{proof}
\renewcommand{\qedsymbol}{$\square$}
%\hspace{7cm}\textcolor{white}{.}\qed
%\end{proof}
\fi

%%%%%%%%%%%%%%%%%%%%%%%%%%%%%%%%%%%%%%%%%%%%%%%%%%%%%%%%%%%%%%%%%%%%%%%%%%%%%%%%%%%%%%%%%%%%%%%%%%%%%%%%%%%%%%%%
%%%%%%%%%%%%%%%%%%%%%%%%%%%%%%%%%%%%%%%%%%%%%%%%%%%%%%%%%%%%%%%%%%%%%%%%%%%%%%%%%%%%%%%%%%%%%%%%%%%%%%%%%%%%%%%%
%%%%%%%%%%%%%%%%%%%%%%%%%%%%%%%%%%%%%%%%%%%%%%%%%%%%%%%%%%%%%%%%%%%%%%%%%%%%%%%%%%%%%%%%%%%%%%%%%%%%%%%%%%%%%%%%
%%%%%%%%%%%%%%%%%%%%%%%%%%%%%%%%%%%%%%%%%%%%%%%%%%%%%%%%%%%%%%%%%%%%%%%%%%%%%%%%%%%%%%%%%%%%%%%%%%%%%%%%%%%%%%%%

\subsection{P\'osa rotation and extension}\label{subsec:posa}
We will now prove Lemma~\ref{lem-posa}, using a standard implementation of P\'osa's rotation-extension technique with edge sprinkling to find a Hamilton cycle (see, for example,~\cite{bollorandomgraphs}). We include the proof to record the very high probability of success that we need, as well as to show that a Hamilton cycle can be found including any fixed edge $e$, to then have a Hamilton path between the vertices of $e$. We begin by defining an $e$-booster, and a rotation.

\begin{defn}\label{boostdef} For a graph $G$ and edges $e,f\in V(G)^{(2)}=\{uv:u,v\in V(G)\text{ and }u\neq v\}$, $f$ is an $e$-booster for $G$ if either $G+e+f$ has a longer path containing $e$ than $G+e$ does, or $G+e+f$ contains a Hamilton cycle through $e$.
\end{defn}

\begin{defn} Let a path $Q$ in a graph $G$ be $Q=u_0u_1\ldots u_\ell$. Let $e\in E(Q)$ and $0\leq i\leq \ell-1$ with $u_iu_{i+1}\neq e$. Then, we \emph{rotate $Q$ in $G$ with $u_0$ and $e$ fixed using $u_\ell u_i$} to get the path $(Q-u_iu_{i+1})+u_{\ell}u_i$.

Note that this is a $u_0,u_{i+1}$-path containing $e$ with the same vertex set as $Q$.
\end{defn}

We now show that a 10-expander has many boosters.

\begin{lemma}\label{rotatelemma} Let $n\geq 3$. If an $n$-vertex graph $G$ is a 10-expander, and $e\in V(G)^{(2)}$, then $G$ has at least $n^2/10^4$ $e$-boosters.
\end{lemma}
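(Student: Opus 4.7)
The plan is to adapt P\'osa's rotation-extension method to preserve the distinguished edge $e$, and then to count $e$-boosters via a double-rotation argument. Let $P$ be a longest path of $G + e$ containing $e$, with endpoints $a,b$ and length $\ell$. If $G + e$ already contains a Hamilton cycle through $e$, then every pair in $V(G)^{(2)}$ is trivially an $e$-booster, and we obtain $\binom{n}{2} \geq n^2/10^4$ of them at once. So assume otherwise, and in particular $\ell \leq n - 1$.

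Next, perform rotations on $P$ that fix the endpoint $a$ and delete only edges of $P$ distinct from $e$, so that the rotated path still contains $e$. Let $S \subseteq V(G)$ be the set of resulting second endpoints of length-$\ell$ paths through $e$ starting at $a$. A standard P\'osa argument, with one extra forbidden rotation site at $e$, gives $|N_G(S)| \leq 3|S|$ (the usual $2|S|$ bound plus one extra vertex per $v \in S$ coming from the forbidden deletion at $e$). Since $G$ is a 10-expander, any $S$ with $|S| \leq n/20$ would satisfy $|N_G(S)| \geq 10|S|$, which contradicts $|N_G(S)| \leq 3|S|$; hence $|S| > n/20$. For each $v \in S$, fix a corresponding length-$\ell$ path $P_v$ from $a$ to $v$ through $e$ and apply the same rotation argument from the endpoint $v$ (keeping $v$ and $e$ fixed) to $P_v$, obtaining a set $S_v$ with $|S_v| > n/20$.

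The crucial step is that for every pair $\{v, w\}$ with $v \in S$, $w \in S_v$, and $\{v, w\} \neq e$, the pair $\{v, w\}$ is automatically an $e$-booster. Indeed, the rotations supply a length-$\ell$ path $P_{v, w}$ from $v$ to $w$ containing $e$. If $\ell = n - 1$, then adding $vw$ to $P_{v, w}$ closes a Hamilton cycle through $e$, giving the second clause of Definition \ref{boostdef}. If $\ell < n - 1$, then $vw \notin E(G)$: otherwise $C = P_{v, w} + vw$ would be a cycle of length $\ell + 1 < n$ through $e$ already present in $G + e$, and since $G$ is connected some $x \in V(C)$ would have a neighbour $y \notin V(C)$; deleting a non-$e$ edge of $C$ incident to $x$ (available since $C$ uses $e$ at most once) and appending $xy$ would produce a path of length $\ell + 1$ through $e$ in $G + e$, contradicting the maximality of $\ell$. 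Hence $\{v, w\}$ is a non-edge of $G + e$, and adding it extends $P_{v, w}$ to a strictly longer path through $e$.

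Finally, counting unordered such pairs gives at least $\frac{1}{2}|S|(|S_v| - 1) - 1 \geq n^2/800 - n/40 - 1$, which comfortably exceeds $n^2/10^4$ for all $n$ large enough that a 10-expander exists (for small $n$ the conclusion is vacuous). The hard part will be verifying the adjusted P\'osa bound $|N_G(S)| \leq 3|S|$ under the $e$-preserving restriction: the extra $|S|$ term over the standard $2|S|$ must still sit below the $10|S|$ expansion lower bound, but the slack between $3$ and $10$ is ample.
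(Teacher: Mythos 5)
Your proof is correct in outline and uses the classical P\'osa double-rotation argument, which is genuinely different from the paper's approach. The paper instead argues by contradiction and averaging: let $\mathcal{E}$ be the set of pairs $\{a,b\}$ for which a path with vertex set $V(P)$ through $e$ has endvertices $a,b$; assume $|\mathcal{E}|\le n^2/10^4$; by averaging over vertices appearing in pairs of $\mathcal{E}$, find a $u_0$ with $1\le |\{v:u_0v\in\mathcal{E}\}|\le n/50$; rotate from $u_0$ to obtain $V_0$ with $|V_0|\le n/50$; show $|N(V_0)|\le 3|V_0|+2$; contradict 10-expansion. Your double-rotation is arguably the more standard route in the Hamiltonicity literature and delivers a constant around $1/800$ before any bookkeeping, whereas the paper's averaging trick is tailored to arrive at exactly $n^2/10^4$ without needing to form the $S_v$ for all $v\in S$. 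Both are clean; the averaging version gives a shorter write-up because it only performs rotations once.

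A few imprecisions are worth flagging, though none are fatal. First, your justification of the bound $|N_G(S)|\le 3|S|$ --- ``one extra vertex per $v\in S$ coming from the forbidden deletion at $e$'' --- does not match the actual accounting: the extra contribution from fixing $e$ is at most two \emph{total} (the two positions $i$ along the reference path for which $\{u_{i-1},u_i\}=e$ or $\{u_i,u_{i+1}\}=e$), not one per vertex of $S$; the correct bound is along the lines of $|N_G(S)|\le 3|S|+2$, which the paper proves. This still sits comfortably below $10|S|$ once $|S|\ge 1$, so $|S|>n/20$ follows; the bound is right, but your explanation of it is not. Second, the sentence ``adding it extends $P_{v,w}$ to a strictly longer path'' is a misstatement: adding $vw$ to a $v,w$-path creates a cycle, and the strictly longer path comes from the connectivity argument you give immediately beforehand (delete a non-$e$ edge of the new cycle and append an edge to an outside vertex). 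Third, the dismissal of small $n$ as ``vacuous'' is slightly off: a 10-expander does exist for any $n\ge 3$ (for $n<20$ any connected graph qualifies), and what actually makes small $n$ harmless is that for $n\le 100$ one has $n^2/10^4<1$ while the endpoint pair of $P$ is always an $e$-booster, giving at least one and hence satisfying the bound; it would be better to say this explicitly rather than call the conclusion vacuous.
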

\ifproofdone
\noindent\textbf{Proof of Lemma~\ref{rotatelemma} is complete.}
\else
\begin{proof} Note that, by Definition~\ref{boostdef}, we can assume that $e\in E(G)$. Let $P$ be a maximal path in $G$ containing $e$, and $V=V(P)$. Let $E$ be the set of pairs $ab$ for which there is an $a,b$-path, $P_{ab}$ say, in $G$ containing $e$ with vertex set $V$. Note that each  $ab\in E$ is an $e$-booster for  $G$. Indeed, $P_{ab}+ab$ is a cycle in $G$ with vertex set $V$ which contains $e$. If $V=V(G)$, then $P_{ab}+ab$ is a Hamilton cycle containing $e$. If $V\neq V(G)$, then, as $G$  is a 10-expander, and hence connected, there exists some $x\in V(G)\setminus V$ and $y\in V$ with $xy\in E(G)$. Let $e'$ be an edge of $P_{ab}+ab$ containing $y$ which is not $e$. Then, $P_{ab}+ab+xy-e'$ is a path containing $e$ with length greater than $P$ in $G+ab$. Thus, in both cases, $ab$ is an $e$-booster for $G$.
%If $G$ contains a Hamilton cycle through $e$, then every $f\in V(G)^{(2)}$ is an $e$-booster for $G$. So, suppose $G$ has no Hamilton cycle through $e$.

Suppose then, for contradiction, that $|E|\leq n^2/10^4$. As the pair of endvertices of $P$ is in $E$, $E\neq \emptyset$, so there is some vertex $u$ in a pair in $E$. If $u$ is in more than $n/50$ such pairs, then there are at least $n/50$ vertices in some pair in $E$. Therefore, by averaging if necessary, there is some $u_0\in V$ such that $1\leq |\{v\in V:u_0v\in E\}|\leq n/50$. Let $Q$ be a path in $G$ with vertex set $V$ which contains $e$ and has $u_0$ as an endvertex. Let $V_0\subset V$ be the set of vertices $v\in V\setminus \{u_0\}$ such that a $u_0,v$-path with vertex set $V$ can be reached by iteratively rotating $Q$ in $G$ with $u_0$ and $e$ fixed. Note that $|V_0|\leq |\{v:u_0v\in E\}|\leq n/50$.

For each $v\in V_0$, let $Q_{v}$ be a $u_0,v$-path with vertex set $V$ which can be reached by iteratively rotating $Q$ with $u_0$ and $e$ fixed. For each $v\in V_0$, as $Q_v$ is a maximal path in $G$, we have $N(v)\subset V(Q_v)=V$, and thus $N(V_0)\subset V$.

Let $\ell$ be the length of $Q$ and label $Q$ as $u_0u_1\cdots u_\ell$. Note that, for each $i\in [\ell-1]$, if $u_{i-1},u_i,u_{i+1}\notin V_0$ then in any sequence of rotations fixing $e$ and $u_0$ we always preserve the subpath $u_{i-1}u_iu_{i+1}$ (in either order), and therefore we never rotate using an edge containing $u_i$. Thus, for each $i\in [\ell-1]$, if $v\in V_0$ and $u_iv\in E(G)$, then we must have one of $u_{i-1}\in V_0$, $u_i\in V_0$, $u_{i+1}\in V_0$, $u_{i-1}u_i=e$ or $u_iu_{i+1}= e$. There can be at most $3|V_0|+2$ values of $i$ for each $v$, and hence $|N_G(V_0)|\leq 3|V_0|+2< 10|V_0|$. As $G$ is a 10-expander, $|V_0|> n/20$, contradicting that $|V_0|\leq n/50$.
\end{proof}
\fi

We use the following standard form of Azuma's inequality for a sub-martingale (see, for example~\cite{probmethod}, for an exposition of martingales and Azuma's inequality).

\begin{theorem}[Azuma's inequality]\label{azuma} If $X_0,X_1,\ldots,X_n$ is a sub-martingale, and $|X_i-X_{i-1}|\leq c_i$ for each $1\leq i\leq n$, then, for each $t>0$, $\P(X_n-X_0\leq -t)\leq \exp\left(\frac{-t^2}{2\sum_{i=1}^nc_i^2}\right)$.
\end{theorem}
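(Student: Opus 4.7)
The plan is to use the standard exponential-moment (Chernoff--Cram\'er) approach. Set $Y_i = -(X_i - X_{i-1})$, so that the sub-martingale property gives $\E[Y_i \mid \mathcal{F}_{i-1}] \leq 0$, where $\mathcal{F}_{i-1}$ is the $\sigma$-algebra generated by $X_0,\ldots,X_{i-1}$, and $|Y_i|\leq c_i$. The event $X_n - X_0 \leq -t$ is exactly $\sum_{i=1}^n Y_i \geq t$, so for any $\lambda > 0$, Markov's inequality applied to $\exp(\lambda\,\cdot\,)$ yields
\begin{equation*}
\P(X_n - X_0 \leq -t)\;\leq\; e^{-\lambda t}\,\E\!\left[\exp\!\left(\lambda \sum_{i=1}^n Y_i\right)\right].
\end{equation*}

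The heart of the argument, and the main obstacle, is the per-step moment generating function bound
\begin{equation*}
\E\bigl[\exp(\lambda Y_i)\,\big|\,\mathcal{F}_{i-1}\bigr] \;\leq\; \exp\!\left(\tfrac{\lambda^2 c_i^2}{2}\right),
\end{equation*}
which is (essentially) Hoeffding's lemma. To establish it, I would use that $y\mapsto e^{\lambda y}$ is convex on $[-c_i,c_i]$, so
\begin{equation*}
e^{\lambda y} \;\leq\; \frac{c_i+y}{2c_i}\,e^{\lambda c_i} + \frac{c_i - y}{2c_i}\,e^{-\lambda c_i}.
\end{equation*}
Taking conditional expectation and using both $\E[Y_i\mid\mathcal{F}_{i-1}]\leq 0$ and $e^{\lambda c_i}\geq e^{-\lambda c_i}$ (for $\lambda\geq 0$) shows that the conditional MGF is at most $\cosh(\lambda c_i)$, and a term-by-term Taylor comparison ($\cosh(x)\leq e^{x^2/2}$) finishes it. The sub-martingale (as opposed to martingale) case is the reason we switch to $Y_i$ and take $\lambda \geq 0$: this is the sign convention that makes the negative conditional mean help rather than hurt.

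With the per-step bound in hand, I iteratively peel off one factor at a time using the tower property: conditioning on $\mathcal{F}_{n-1}$,
\begin{equation*}
\E\!\left[\exp\!\left(\lambda \sum_{i=1}^n Y_i\right)\right]
= \E\!\left[\exp\!\left(\lambda \sum_{i=1}^{n-1} Y_i\right)\E\bigl[\exp(\lambda Y_n)\,\big|\,\mathcal{F}_{n-1}\bigr]\right]
\leq \exp\!\left(\tfrac{\lambda^2 c_n^2}{2}\right) \E\!\left[\exp\!\left(\lambda \sum_{i=1}^{n-1} Y_i\right)\right],
\end{equation*}
and induction gives the overall bound $\exp\!\bigl(\tfrac{\lambda^2}{2}\sum_{i=1}^n c_i^2\bigr)$. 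Plugging back into the Markov bound yields
\begin{equation*}
\P(X_n - X_0 \leq -t)\;\leq\; \exp\!\left(\tfrac{\lambda^2}{2}\sum_{i=1}^n c_i^2 - \lambda t\right).
\end{equation*}
Optimising over $\lambda > 0$ by choosing $\lambda = t / \sum_{i=1}^n c_i^2$ (the minimiser of the quadratic in $\lambda$ in the exponent) produces the claimed bound $\exp\!\bigl(-t^2 / (2\sum_{i=1}^n c_i^2)\bigr)$. The only genuinely delicate point in the whole proof is the convexity/Hoeffding step; everything else is bookkeeping.
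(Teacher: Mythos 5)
Your proof is correct: the reduction to $Y_i=-(X_i-X_{i-1})$, the Hoeffding-lemma bound on the conditional moment generating function, the tower-property induction, and the optimisation over $\lambda$ are all sound, and this is precisely the standard argument. The paper does not prove this statement at all — it cites Azuma's inequality as a known result (referring the reader to a standard text) — so your write-up simply supplies the classical proof that the cited source would give.
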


We now prove Lemma~\ref{lem-posa}. %We use that if a sequence of edges to an $n$-vertex graph, if at least $n$ $e$-boosters for the proceddare added

\ifproofdone
\noindent\textbf{Proof of Lemma~\ref{lem-posa} is complete.}
\else
\begin{proof}[Proof of Lemma~\ref{lem-posa}] As in the lemma statement, let $G_0$ be a 10-expander with vertex set $[n]$ and let $x,y\in V(G_0)$ be distinct. Let $p=\log n/(10^5n)$ and $G_1= G(n,p)$. We will show that, with probability $1-\exp(-\omega(n))$, $G_0\cup G_1$ contains a Hamilton $x,y$-path, and thus the lemma follows.

Let $m=E(G_1)$ and, uniformly at random, label $E(G_1)=\{e_1,\ldots,e_{m}\}$. Let $m_0=\log n/10^6$. By a simple application of Lemma~\ref{chernoff}, we have
\begin{equation}\label{msize}
\P(m\geq m_0)=1-\exp(-\omega(n)).
\end{equation}

We will show that, letting $E$ be the event that $G_0\cup G_1$ contains an $x,y$-Hamilton cycle, we have
\begin{equation}\label{Emm}
\P(E|m=m_0)=1-\exp(-\omega(n)).
\end{equation}
As $\P(E|m=\bar{m})\geq \P(E|m=m_0)$ for each $\bar{m}\geq m_0$, we will then have
\begin{align*}
\P(E)&=\sum_{\bar{m}=0}^{\binom{n}{2}}\P(m=\bar{m})\cdot \P(E|m=\bar{m})\geq \sum_{\bar{m}=m_0}^{\binom{n}{2}}\P(m=\bar{m})\cdot \P(E|m=m_0)
\\
&
\geq \P(m\geq m_0)\cdot \P(E|m=m_0)\overset{\eqref{msize},\eqref{Emm}}{=}1-\exp(-\omega(n)),
\end{align*}
as required.

It is left then to prove \eqref{Emm}. Let $H_0=G_0$, and, for each $1\leq i\leq m_0$, let $H_i=H_{i-1}+e_i$ and let $X_i$ be 1 if $e_i$ is an $xy$-booster for $H_{i-1}$, and 0 otherwise. For each $i\in [m_0]$, as $H_{i-1}$ contains $H_0$ it is always a 10-expander. Hence, by Lemma~\ref{rotatelemma}, the set of $xy$-boosters for $H_{i-1}$ always has size at least $n^2/10^4$. Therefore, the probability that $X_i=1$, conditioned on any possible values of $H_1,\ldots,H_{i-1}$ is at least $(n^2/10^4-i)/\binom{n}{2}\geq 1/10^5$.
 For each $0\leq i\leq m_0$, let $Y_i=\sum_{j=1}^i(X_j-1/10^5)$, and note that $Y_0,Y_1,\ldots,Y_{m_0}$ is a sub-martingale with $|Y_i-Y_{i-1}|\leq 1$ for each $i\in [m_0]$

Thus, by Theorem~\ref{azuma},
\[
\P(Y_{m_0}<-m_0/10^6)= \exp(-\Omega(m_0))=\exp(-\omega(n)).
\]
Note that, if $Y_{m_0}\geq -m_0/10^6$, then $\sum_{i=0}^{m_0} X_i\geq 9{m_0}/10^6=\omega(n)$.
Furthermore, if $\sum_{i=0}^{m_0} X_i\geq n$, then at least $n$  $xy$-boosters are added somewhere in the sequence $H_1,\ldots,H_{m_0}$, and hence $H_{m_0}$ contains a Hamilton cycle containing $xy$, and thus a Hamilton $x,y$-path. Therefore, \eqref{Emm} holds, and the proof is complete.
\end{proof}
\fi

%%%%%%%%%%%%%%%%%%%%%%%%%%%%%%%%%%%%%%%%%%%%%%%%%%%%%%%%%%%%%%%%%%%%%%%%%%%%%%%%%%%%%%%%%%%%%%%%%%%%%%%%%%%%%%%%
%%%%%%%%%%%%%%%%%%%%%%%%%%%%%%%%%%%%%%%%%%%%%%%%%%%%%%%%%%%%%%%%%%%%%%%%%%%%%%%%%%%%%%%%%%%%%%%%%%%%%%%%%%%%%%%%
%%%%%%%%%%%%%%%%%%%%%%%%%%%%%%%%%%%%%%%%%%%%%%%%%%%%%%%%%%%%%%%%%%%%%%%%%%%%%%%%%%%%%%%%%%%%%%%%%%%%%%%%%%%%%%%%
%%%%%%%%%%%%%%%%%%%%%%%%%%%%%%%%%%%%%%%%%%%%%%%%%%%%%%%%%%%%%%%%%%%%%%%%%%%%%%%%%%%%%%%%%%%%%%%%%%%%%%%%%%%%%%%%

%%%%%%%%%%%%%%%%%%%%%%%%%%%%%%%%%%%%%%%%%%%%%%%%%%%%%%%%%%%%%%%%%%%%%%%%%%%%%%%%%%%%%%%%%%%%%%%%%%%%%%%%%%%%%%%%
%%%%%%%%%%%%%%%%%%%%%%%%%%%%%%%%%%%%%%%%%%%%%%%%%%%%%%%%%%%%%%%%%%%%%%%%%%%%%%%%%%%%%%%%%%%%%%%%%%%%%%%%%%%%%%%%
%%%%%%%%%%%%%%%%%%%%%%%%%%%%%%%%%%%%%%%%%%%%%%%%%%%%%%%%%%%%%%%%%%%%%%%%%%%%%%%%%%%%%%%%%%%%%%%%%%%%%%%%%%%%%%%%
%%%%%%%%%%%%%%%%%%%%%%%%%%%%%%%%%%%%%%%%%%%%%%%%%%%%%%%%%%%%%%%%%%%%%%%%%%%%%%%%%%%%%%%%%%%%%%%%%%%%%%%%%%%%%%%%

\section{Proof of Theorem~\ref{thmrandprocess}}\label{sec:di}
We will prove Theorem~\ref{thmrandprocess} from Theorem~\ref{mainthm} in Section~\ref{sec:randpf}, and then deduce Theorems~\ref{hittime} and~\ref{sharpthres} in Sections~\ref{sec:imp1} and~\ref{sec:imp2}. We start in Section~\ref{sec:diprop} by proving some properties of the random digraph process that we require, before selecting the vertices from the cycle to be embedded in Section~\ref{subsec:choosevx}.

\subsection{Properties of the random digraph process}\label{sec:diprop}
In the $n$-vertex random digraph process $D_0,D_1,\ldots,D_{n(n-1)}$, we focus on four particular digraphs, $D_{i_0}$, $D_{i_1}$, $D_{i_2}$ and $D_{i_3}$, where
\begin{equation}\label{iseq}
i_0=\frac{9n\log n}{20}, \;\;\;  i_1=\frac{n\log n}{2}-n\log\log n,\;\;\;  i_2=\frac{3n\log n}{4}\;\;\;  \text{and} \;\;\; i_3=n\log n+2n\log\log n.
\end{equation}
We will prove that the properties \emph{\ref{rp1}}--\emph{\ref{rp7}} typically hold for these digraphs, in Lemmas~\ref{lem:keyrandprops} and~\ref{lem:keyrandprops2}, where, for convenience, we use the random digraphs $K_0,K_1,K_2$ and $K_3$.
The digraphs we need to consider for Theorem~\ref{thmrandprocess} will lie between $D_{i_1}$ and $D_{i_3}$. That is, $D_{i_1}$ has, with high probability, some vertex $v$ with $d^+(v)+d^-(v)\leq 1$ (see \emph{\ref{rp7other}}), and $D_{i_3}$ has, with high probability, minimum in- and out-degree at least 2 (see \emph{\ref{rp7}}). We use $D_{i_0}$ as a point in the random digraph process by which most vertices do not have low in- or out-degree, and $D_{i_2}$ as a point by which the low degree vertices are likely to be few enough that they are well-spaced in the digraph.
 In what follows, we consider `low degree' to be degree at most $\log n/300$.

After adding $i_0$ edges in the random digraph process, giving the digraph $D_{i_0}$, it will typically already be clear that most of the vertices in each $D_i$, $i_1\leq i\leq i_3$, will not have low in- or out-degree. We collect the vertices for which this is not yet guaranteed into the set $S_0$ (see \eqref{sjdefn}), which is likely to have size at most $n^{2/3}$ (see \emph{\ref{rp1}}). To determine the vertices of low in- and out-degree of each  $D_i$, $i_1\leq i\leq i_3$, we only need to consider the edges in $E(D_i)\setminus E(D_{i_0})$ with at least one vertex in $S_0$ -- say the set of these edges is $E_i$. Conditioning on $E(D_{i_0})$ and $E_i$, let $m_i=|E(D_{i_0})\cup E_i|$. Observe that, with this conditioning, $E(D_i)\setminus (E(D_{i_0})\cup E_i)$ is distributed as $i-m_i$ edges chosen uniformly at random from the non-edges in $E(D_{i_0})$ with no vertex in $S_0$. As $S_0$ is a sublinear set of vertices, when $i\geq i_1$, we are likely to have that $i-m_i=\Omega(\log n)$ (see \emph{\ref{rp6}}). These $i-m_i$ edges will provide the random digraph which we use (in a modified form) to apply Theorem~\ref{mainthm}. The pseudorandom digraph in this application will (suitably modified) be the digraph $D_i$ with the addition of any edge from $E(D_i)\setminus E(D_{i_0})$ with at least one vertex in $S_0$. As a technique, conditioning on (events including) $S_0$ in this way comes from work of Krivelevich, Lubetzky and Sudakov~\cite{krivcores} in their study of the Hamiltonicity of the $k$-core.

We use $i_2$ as an arbitrary midpoint in the interval $[i_1,i_3]$ by which the relevant structure of the random digraph will have changed. In $D_{i_1}$, we expect to have plenty of vertices with out-degree 0 or with in-degree 0, and some vertices with out- and in-degree both 1. Our methods are complicated by the likely existence of edges (and short paths) between the vertices with low in- or out-degree (we give a likely upper-bound for such paths in \emph{\ref{rp2}}). We deal with this by showing that, for $i_1\leq i\leq i_2$, $D_i$ will likely have sufficiently many vertices with in-degree 0 (see \emph{\ref{rp4}}) that any cycle we embed into $D_i$ has enough changes of direction to cover not only the vertices of in-degree 0 or out-degree 0, but also any vertices in edges and short paths between vertices with low in- or out-degree. Helpfully, there are typically never any edges or short paths between vertices with both low in- and out-degree (see \emph{\ref{rp5a}}), or any short cycles containing a vertex of low in- or low out-degree (see \emph{\ref{rp2other}}).

When enough edges are added to reach $D_{i_2}$, the set of low in- or out-degree vertices will have decreased in size enough that there are likely to be no edges or very short paths between these vertices, even after more edges are added to reach $D_{i_3}$ (see \emph{\ref{rp5b}}). This means that low in- and out-degree vertices are sufficiently far apart in $D_i$, $i_2\leq i\leq i_3$, that we can assign them neighbours without worrying about conflicts.

In addition to the properties mentioned above, we prove properties \emph{\ref{rp3}}--\emph{\ref{rpnew}} are likely to hold,  which we will use to help show pseudorandom properties of a modified subgraph of $D_i$, for each $i_1\leq i\leq i_3$. We start with the following useful proposition.

\begin{prop}\label{prop:simple} There is some $n_0$ such that the following holds for each $n\geq n_0$. Letting $d=\log n/300$, for each $k\leq n^{1/2}$ and $p$ with $\log n/4n\leq p\leq 2\log n/n$, we have
\[
\sum_{i=0}^d\binom{n}{i}p^i(1-p)^{n-k}\leq \exp\left(-pn+\frac{1}{30}\log n\right).
\]
\end{prop}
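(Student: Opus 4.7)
\medskip

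\noindent\textbf{Proof plan for Proposition~\ref{prop:simple}.}
The factor $(1-p)^{n-k}$ does not depend on the summation index, so the plan is to pull it out and bound the two factors separately:
\[
\sum_{i=0}^{d}\binom{n}{i}p^i(1-p)^{n-k} \;=\; (1-p)^{n-k}\sum_{i=0}^{d}\binom{n}{i}p^i.
\]
For the first factor, using $1-p\leq e^{-p}$ gives $(1-p)^{n-k}\leq e^{-pn+pk}$, and the assumptions $k\leq n^{1/2}$ and $p\leq 2\log n/n$ make $pk\leq 2\log n/n^{1/2}=o(1)$. So up to a $1+o(1)$ factor, the first factor contributes exactly the target $e^{-pn}$, and the entire slack of $\log n/30$ in the exponent is left to absorb the second factor.

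For the remaining sum $\sum_{i=0}^{d}\binom{n}{i}p^i$, the key observation is that in this regime the terms are monotone increasing in $i$. Indeed, the ratio of consecutive terms equals $(n-i)p/(i+1)$, and with $i\leq d=\log n/300$ and $np\geq \log n/4$, this ratio is at least something like $np/(d+1)\geq 70$, so the sum is bounded by $(d+1)$ times the last term. Using the standard estimate $\binom{n}{d}\leq (en/d)^d$, I get
\[
\sum_{i=0}^{d}\binom{n}{i}p^i \;\leq\; (d+1)\binom{n}{d}p^d \;\leq\; (d+1)\Bigl(\frac{enp}{d}\Bigr)^{d}.
\]
Now $np\leq 2\log n$ and $d=\log n/300$, so $enp/d\leq 600e$. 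Hence the last displayed quantity is at most $(d+1)(600e)^{\log n/300}$.

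Taking logarithms, the whole bound becomes
\[
\log(d+1) \;+\; \frac{\log(600e)}{300}\log n \;+\; o(1) \;-\; pn,
\]
and what needs to be checked is that this is at most $-pn+\tfrac{1}{30}\log n$ for all sufficiently large $n$. Since $600e<e^{10}$, we have $\log(600e)/300<1/30$, so the coefficient of $\log n$ on the left is strictly smaller than $1/30$, leaving a linear-in-$\log n$ surplus that comfortably swallows $\log(d+1)\leq\log\log n$ and the $o(1)$ error. There is really no obstacle here — the proposition is a straightforward calculation — the only thing to verify carefully is the numerical inequality $\log(600e)/300<1/30$, which is the reason the constant $300$ appears in the definition of $d$, and the monotonicity of the binomial terms on $[0,d]$, which is what makes the sum comparable to its last term rather than blowing up.
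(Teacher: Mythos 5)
Your proof is correct and takes essentially the same approach as the paper: both bound $\sum_{i\leq d}\binom{n}{i}p^i$ by $(d+1)$ times the largest term, then use $\binom{n}{d}\leq (en/d)^d$ and the assumption $np\leq 2\log n$ to get a $(600e)^d$-type bound, and finally check numerically that $d\log(600e)$ plus lower-order terms is below $\tfrac{1}{30}\log n$. The only cosmetic differences are that the paper establishes monotonicity via $(enp/i)^i$ rather than via the ratio $(n-i)p/(i+1)$ directly, and absorbs the $(d+1)$ factor by inflating the base from $600e$ to $700e$ (then using $\log(700e)<8$) instead of tracking $\log(d+1)$ separately as you do.
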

\begin{proof} As $100<enp/d\leq 600e$,  we have
\begin{align*}
\sum_{i=0}^d\binom{n}{i}p^i(1-p)^{n-k}&\leq \sum_{i=0}^d\left(\frac{enp}{i}\right)^i(1-p)^{n-k}\leq (d+1)\left(\frac{enp}{d}\right)^de^{-p(n-k)}
\\
&\leq \left({700e}\right)^de^{-p(n-k)}\leq \exp(-pn+pk+d\log(700e))\leq \exp(-pn+(\log n)/30),
\end{align*}
where the last line of inequalities hold for sufficiently large $n$ as $pk\leq 2n^{-1/2}\log n$ and $\log(700e)<8$.
\end{proof}

We now prove that properties \emph{\ref{rp1}}--\emph{\ref{rpnew}} are likely to hold, as follows.

\begin{lemma}\label{lem:keyrandprops}
  %Let $D_0,D_1,\ldots,D_{n(n-1)}$ be the random digraph process. Let $n\in\N$ and let $i_0,i_1,i_2$ and $i_3$ be as in \eqref{iseq}.
  Let $(K_0,K_1,K_2,K_3)$ be drawn uniformly at random from the set of such tuples such that, for each $j\in \{0,1,2,3\}$, $K_j$ is a digraph with vertex set $[n]$ and $i_j$ edges (as set in~\eqref{iseq}), and $K_0\subset K_1\subset K_2\subset K_3$.
  Let $d=\log n/300$, and, for each $j\in \{0,1,2,3\}$, let
  \begin{equation}\label{sjdefn}
  S_j=\{v\in  [n]:d^+_{K_j}(v)\leq d\;\text{ or }\;d^-_{K_j}(v)\leq d\}.
  \end{equation}
Let
  \[
  T=\{v\in [n]:d^+_{K_1}(v)\leq d\;\text{ and }\;d^-_{K_1}(v)\leq d\}.
  \]

Then, with high probability, the following hold.
\stepcounter{propcounter}
\begin{enumerate}[label = \textbf{\emph{\Alph{propcounter}\arabic{enumi}}}]
\item $|S_0|\leq n^{2/3}$.\label{rp1}
\item $E(K_3)\setminus E(K_0)$ contains at most $n$ edges with some vertex in $S_0$.\label{rp6}
\item The number of paths in $K_3$ between vertices in $S_1$ with length at most 4 is at most $n^{1/6}$.\label{rp2}
\item There are no cycles in $K_3$ with length at most 3 containing a vertex in $S_1$.\label{rp2other}
\item $K_3[S_2\cup N_{K_3}^+(S_2)\cup N_{K_3}^-(S_2)]$ is the disjoint union of $|S_2|$ stars.\label{rp5b}
\item $K_3[T\cup N_{K_3}^+(T)\cup N_{K_3}^-(T)]$ is the disjoint union of $|T|$ stars with no vertices in $S_1\setminus T$.\label{rp5a}
\item Each $v\in [n]$ has at most $2$ in- or out-neighbours in $K_3$ in $S_1\cup N^+_{K_3}(S_1-v)\cup N^-_{K_3}(S_1-v)$.\label{rp3}
\item For any sets $A,B\subset [n]$ and $\diamond\in \{+,-\}$ with $|A|\leq 100n\log\log n/\log n$ and, for each $v\in A$, $d^\diamond_{K_3}(v,B)\geq (\log n)^{2/3}/2$, we have $|B|\geq 100|A|(\log n)^{1/3}$.\label{rp8}
\item $\Delta^\pm(K_3)\leq 50\log n$.\label{rpnew}
\end{enumerate}
\end{lemma}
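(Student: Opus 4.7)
The plan is to reduce the uniform $i_j$-edge model to the binomial model $D(n,p_j)$ with $p_j=(1+o(1))i_j/n(n-1)$ via standard coupling, so that we may assume $K_j=D(n,p_j)$ with $p_0\sim 9\log n/(20n)$, $p_1\sim \log n/(2n)$, $p_2\sim 3\log n/(4n)$, and $p_3\sim\log n/n$, built as a nested sequence by exposing additional edges. Proposition~\ref{prop:simple} then gives, for each $v\in[n]$, $\diamond\in\{+,-\}$ and $j\in\{0,1,2,3\}$, the estimate $\P(d^\diamond_{K_j}(v)\leq d)\leq \exp(-p_j n+\log n/30)$, evaluating to at most $n^{-5/12+o(1)}$, $n^{-7/15+o(1)}$, $n^{-43/60+o(1)}$ and $n^{-29/30+o(1)}$ respectively. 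Since in- and out-degrees in $D(n,p)$ are independent, $\P(v\in T)\leq n^{-14/15+o(1)}$. All remaining work is first-moment/union-bound calculations at these densities.

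From here, \ref{rp1} follows from Markov applied to $\E|S_0|\leq 2n^{7/12+o(1)}$; \ref{rpnew} from a Chernoff bound on Bin$(n-1,p_3)$ unioned over all vertices; and \ref{rp6} by exposing $K_0$ first, conditioning on $|S_0|\leq n^{2/3}$, and noting that each of the $O(n\log n)$ subsequent edges independently touches $S_0$ with probability $O(n^{-1/3})$, giving expected count $O(n^{2/3}\log n)=o(n)$ plus Chernoff concentration. For the short-path/cycle properties \ref{rp2} and \ref{rp2other}, first-moment sums over ordered paths (resp.\ cycles) of length $\ell\leq 4$ are bounded by $O(n^{\ell+1}p_3^\ell(n^{-7/15})^2)=O(n^{1/15+o(1)})$ for paths with both endpoints in $S_1$ (which is $o(n^{1/6})$) and by $O(n^{-6/15+o(1)})=o(1)$ for cycles through $S_1$; the $S_1$-memberships involve only $O(1)$ path edges, so the near-independence loses only a constant factor. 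Properties \ref{rp5b}, \ref{rp5a} and \ref{rp3} come from analogous first-moment bounds on the forbidden configurations (edges between two $S_2$- or $T$-vertices, two such vertices sharing a $K_3$-neighbour, a $T$-vertex with a $K_3$-neighbour in $S_1\setminus T$, or a vertex with three $K_3$-neighbours lying in $S_1\cup N^\pm_{K_3}(S_1-v)$), each having expected count $o(1)$ thanks to the small densities of $S_1$, $S_2$ and $T$.

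For the expansion property \ref{rp8}, we union bound over pairs $(A,B)$ with $|A|=a\leq 100n\log\log n/\log n$ and $|B|=\lfloor 100a(\log n)^{1/3}\rfloor$. For each fixed such pair, the probability that every $v\in A$ has $d^\diamond_{K_3}(v,B)\geq (\log n)^{2/3}/2$ is, by independence across $v$ and a Binomial-tail estimate,
\[
\left[\binom{b}{\lceil(\log n)^{2/3}/2\rceil}p_3^{(\log n)^{2/3}/2}\right]^a \leq \left(\frac{Ca(\log n)^{2/3}}{n}\right)^{a(\log n)^{2/3}/2}
\]
for some absolute constant $C$. Combining this with $\binom{n}{a}\binom{n}{b}\leq\exp(O(a(\log n)^{4/3}))$ and summing over $a\geq 1$ yields total probability $\exp(-\omega(\log n))$. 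The main point of care is this last union bound: one must check that the $(\log n)^{1/3}$ shortfall in $b/a$ against the threshold $(\log n)^{2/3}/2$ produces $\Theta(a(\log n)^{5/3})$ decay in the exponent, comfortably absorbing the $\exp(O(a(\log n)^{4/3}))$ arising from the choice of $B$. All the other properties are essentially routine first-moment calculations once the four digraphs $K_0\subset K_1\subset K_2\subset K_3$ are suitably coupled.
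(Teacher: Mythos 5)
Your overall strategy mirrors the paper's: couple the uniform $i_j$-edge models with nested binomial digraphs (taking $p_j$ a touch below $i_j/n(n-1)$ for $j\le 2$ and a touch above for $j=3$, so that the relevant monotonicities go the right way), then prove each property for the binomial model by first-moment/union-bound calculations anchored on Proposition~\ref{prop:simple}. Your density estimates ($n^{-5/12}$, $n^{-7/15}$, $n^{-43/60}$, $n^{-29/30}$ and $n^{-14/15}$ for $T$) are all correct, the first-moment bounds for \ref{rp1}, \ref{rp6}, \ref{rp2}, \ref{rp2other}, \ref{rp5b}, \ref{rp5a}, \ref{rp3}, \ref{rpnew} are the right arguments, and the accounting for the $O(1)$ path edges feeding into the degree conditions is exactly the care the paper takes.

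However, the final step of your \ref{rp8} calculation has a genuine gap. You bound the union-bound cost by $\binom{n}{a}\binom{n}{b}\le n^{a+b}=\exp\bigl(O(a(\log n)^{4/3})\bigr)$ and claim the binomial tail yields $\Theta(a(\log n)^{5/3})$ decay, so the decay comfortably wins. This comparison is only valid when $\log\bigl(n/(Ca(\log n)^{2/3})\bigr)=\Theta(\log n)$, i.e.\ for $a\le n^{1-\varepsilon}$. But $a$ ranges up to $100n\log\log n/\log n=n^{1-o(1)}$, and in that regime $\log\bigl(n/(Ca(\log n)^{2/3})\bigr)=\Theta(\log\log n)$, so the decay exponent is only $\Theta\bigl(a(\log n)^{2/3}\log\log n\bigr)$ -- which is dominated by your crude $O(a(\log n)^{4/3})$ bound on the union-bound cost, and the argument collapses. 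The fix, which the paper makes implicitly by writing $\binom{n}{d_1 t}\le(en/(d_1 t))^{d_1 t}$ and absorbing everything into a single base raised to $7d_0t/8$, is to use the entropic bound $\binom{n}{b}\le(en/b)^b$; then for large $a$ one has $\log(en/b)=\Theta(\log\log n)$ and the union-bound cost drops to $\Theta\bigl(a(\log n)^{1/3}\log\log n\bigr)$, which is still smaller than $\Theta\bigl(a(\log n)^{2/3}\log\log n\bigr)$ by a factor $(\log n)^{1/3}$. Without that sharper bound, the large-$a$ terms in your union bound are not controlled.
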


\ifproofdone

\bigskip

\noindent\textbf{Proof of Lemma~\ref{lem:keyrandprops} is complete.}
\else

\begin{proof} First, we will choose binomial random digraphs $\bar{K}_0,\bar{K}_1, \bar{K}_2$, and $\bar{K}_3$ and use them to choose the digraphs $K_0,  K_1, K_2,$ and $K_3$ with the distribution in the lemma. This will allow us to prove likely properties of  $\bar{K}_0,\bar{K}_1, \bar{K}_2$, and $\bar{K}_3$ and
infer \emph{\ref{rp1}}--\emph{\ref{rpnew}} from them.

Let $N=n(n-1)$, $p_0=(i_0-n)/N$, $p_1=(i_1-n)/N$, $p_2=(i_2-n)/N$, and $p_3=(i_3+n)/N$.
For each $u,v\in [n]$ with $u\neq v$, let $X_{uv}$ be chosen uniformly at random from $[0,1]$. For each $j\in \{0,1,2,3\}$, let $\bar{K}_j$ be the digraph with vertex set $[n]$ and edge set $\{uv:u,v\in [n],u\neq v,X_{uv}\leq p_j\}$.
Let $d=\log n/300$. For each $j\in \{0,1,2,3\}$, let
\[
\bar{S}_j=\{v\in [n]:d^+_{\bar{K}_j}(v)\leq d\text{ or }d^-_{\bar{K}_j}(v)\leq d\}\]
and
\[
\bar{T}=\{v\in [n]:d^+_{\bar{K}_1}(v)\leq d\text{ and }d^-_{\bar{K}_1}(v)\leq d\}.
\]

Now, note that, for each $j\in \{0,1,2,3\}$, $\bar{K}_j$ has the same distribution as $D(n,p_j)$. Furthermore, $\bar{K}_0\subset\bar{K}_1\subset\bar{K}_2\subset\bar{K}_3$.
Let $\ell=e(\bar{K}_3)-e(\bar{K}_0)$, and label the edges of $E(\bar{K}_3)\setminus E(\bar{K}_0)$ as $e_1,\ldots,e_\ell$ uniformly at random subject to the restriction that the edges of $E(\bar{K}_1)\setminus E(\bar{K}_0)$ come first in this order, followed by those in $E(\bar{K}_2)\setminus E(\bar{K}_1)$, and then those in $E(\bar{K}_3)\setminus E(\bar{K}_2)$.
Let $\mathbf{E}$ be the event that $e(\bar{K}_3)\geq i_3$, and, for each $j\in \{0,1,2\}$, $e(\bar{K}_j)\leq i_j$.
If $\mathbf{E}$ holds, then, for each $j\in \{0,1,2,3\}$, let $K_j$ be the random graph $\bar{K}_0$ with the edges $e_{1},\ldots,e_{i_j-e(\bar{K}_0)}$ added.
If $\mathbf{E}$ does not hold, then let $(K_0,K_1,K_2,K_3)$ be drawn uniformly at random from the set of such tuples such that, for each $j\in \{0,1,2,3\}$, $K_j$ is a digraph with vertex set $[n]$ and $i_j$ edges, and $K_0\subset K_1\subset K_2\subset K_3$. Note that the distribution of $(K_0,K_1,K_2,K_3)$ is the same conditioned on $\mathbf{E}$ holding or on $\mathbf{E}$ not holding, and thus has the distribution as described in the lemma.

Let $\overline{\emph{\ref{rp1}}}$--$\overline{\emph{\ref{rpnew}}}$ be the properties \emph{\ref{rp1}}--\emph{\ref{rpnew}} with $K_j$, $S_j$ and $T$ replaced by $\bar{K}_j$, $\bar{S}_j$ and $\bar{T}$ respectively, for any relevant $j\in \{0,1,2,3\}$.
Now, if $\mathbf{E}$ holds, then $\bar{K}_0\subset K_0$, $\bar{K}_1\subset K_1$, $\bar{K}_2\subset K_2$, and $K_3\subset \bar{K}_3$ and we also have $T\subset \bar{T}$,  and $S_j\subset \bar{S}_j$ for each $j\in \{0,1,2\}$. Therefore, if $\mathbf{E}$ holds, then each property $\overline{\emph{\ref{rp1}}}$--${\overline{\emph{\ref{rpnew}}}}$ implies the corresponding property \emph{\ref{rp1}}--\emph{\ref{rpnew}}. Indeed,
decreasing the sets $T,S_0,S_2$, adding edges in $K_3$ to $K_0$ and removing edges from $K_3$ makes it easier for each of these properties to hold.
Therefore, if $\mathbf{E}$ and $\overline{\emph{\ref{rp1}}}$--${\overline{\emph{\ref{rpnew}}}}$ hold individually with high probability, then \emph{\ref{rp1}}--\emph{\ref{rpnew}} hold collectively with high probability. We now show in turn that each of $\mathbf{E}$ and $\overline{\emph{\ref{rp1}}}$--${\overline{\emph{\ref{rpnew}}}}$ hold with high probability.

\medskip

\noindent\textbf{$\mathbf{E}$:} By Lemma~\ref{chernoff} with $\eps=1/4\log n$, for each $j\in \{0,1,2\}$,
\begin{align}
\P(e(\bar{K}_j)> i_j)&\leq \P(|e(\bar{K}_j)-p_jn(n-1)|> n)\leq \P(|e(\bar{K}_j)-p_jn(n-1)|\geq \eps p_jn(n-1))\nonumber\\
&\leq 2\exp(-\eps^2p_jn(n-1)/3)=o(1).\label{sizecalc}
\end{align}
Similarly, $\P(e(\bar{K}_3)< i_3)=o(1)$. Therefore, $\mathbf{E}$ holds with high probability.

\medskip

\noindent\textbf{$\overline{\emph{\ref{rp1}}}$:}
For each $v\in \bar{S}_0$, there will be some $\diamond\in \{+,-\}$ and $A\subset [n]\setminus \{v\}$ with $|A|\leq d$ such that there is a $\diamond$-edge from $v$ to each vertex in $A$ and no $\diamond$-edge from $A$ to $[n]\setminus (A\cup\{v\})$. Therefore, for large $n$, using Proposition~\ref{prop:simple}, we have, as $p_0n=(9/20-o(1))\log n$, that
\begin{align*}
\E|\bar{S}_0|&\leq n\cdot 2\cdot \sum_{i=0}^d\binom{n-1}{i}p_0^i(1-p_0)^{n-i-1}\leq 2n \cdot \exp(-p_0n+(\log n)/30)=o(n^{2/3}).
\end{align*}%2\exp((1-9/20+1/30)\log n)=
Therefore, by Markov's inequality, with high probability \textbf{$\overline{\emph{\ref{rp1}}}$} holds.

\medskip

\noindent\textbf{$\overline{\emph{\ref{rp6}}}$:} We will show that $\P(\overline{\emph{\ref{rp6}}}\text{ holds}\,|\,\overline{\emph{\ref{rp1}}}\text{ holds})=1-o(1)$. Revealing the edges of $\bar{K}_0$,
if $\overline{\emph{\ref{rp1}}}$ holds, then there are at most $2|\bar{S}_0|\cdot n\leq 2n^{5/3}$ directed non-edges in $\bar{K}_0$ with at least one vertex in $\bar{S}_0$. For each such $uv$, the probability that $uv\in E(\bar{K}_3)\setminus E(\bar{K}_0)$ is $\P(X_{uv}\leq p_3|X_{uv}>p_0)\leq 2p_3$.
Thus, the number of edges in $E(\bar{K}_3)\setminus E(\bar{K}_0)$ has expectation at most $4p_3n^{5/3}=o(n)$. Therefore,  $\P(\overline{\emph{\ref{rp6}}}\text{ holds}\,|\,\overline{\emph{\ref{rp1}}}\text{ holds})=1-o(1)$.
Thus, as $\overline{\emph{\ref{rp1}}}$ holds with high probability, so does $\overline{\emph{\ref{rp6}}}$.

\medskip

\noindent\textbf{$\overline{\emph{\ref{rp2}}}$:} Let $X_1$ be the number of paths of length at most 4 in $\bar{K}_3$ between vertices in $\bar{S}_1$. For each such path, there are distinct vertices $x,y$ (for the endvertices) an integer $k\in \{0,1,2,3\}$, and distinct vertices $v_1,\ldots,v_k\in [n]\setminus\{x,y\}$, so that $xv_1\ldots v_ky$ is a path in $\bar{K}_3$ (with any orientations on its edges), and (as $x,y\in \bar{S}_1$) sets $A_x,A_y\subset [n]\setminus \{x,y,v_1,\ldots,v_k\}$ with size at most $d$ and $\diamond_x,\diamond_y\in \{+,-\}$ for which, for each $v\in \{x,y\}$  there is an $\diamond_v$-edge from $v$ to each vertex in $A_v$ in $\bar{K}_1$ and no $\diamond_v$-edge
 from $v$ to $[n]\setminus (A_v\cup\{x,y,v_1,\ldots,v_k\})$ in $\bar{K}_1$. Thus, as there are $2^{k+1}$ possible orientations for a path with $k$ interior vertices, for large $n$, using Proposition~\ref{prop:simple}, we have, as $2p_1n=(1-o(1))\log n$, that
\begin{align}
\E X_1&\leq \frac{n(n-1)}{2}\cdot \sum_{k=0}^3\binom{n-2}{k}2^{k+1}p_3^{k+1}\cdot \left(2\sum_{i=0}^{d}\binom{n-2-k}{i}p_1^{i}(1-p_1)^{n-2-k-i}\right)^2\nonumber\\
&\leq n\cdot \sum_{k=0}^3(2p_3n)^{k+1} \cdot 4\exp(-2p_1n+(\log n)/15)\nonumber\\
&\leq 16n\cdot (4\log n)^4\cdot \exp(-(9/10+o(1))\log n)=o(n^{1/6}).\label{eq:key}
\end{align}
Thus, with high probability \textbf{$\overline{\emph{\ref{rp2}}}$} holds.

\medskip

\noindent\textbf{$\overline{\emph{\ref{rp2other}}}$:}
Let $X_2$ be the number of cycles of length at most 3 in $\bar{K}_3$ with a vertex in $\bar{S}_1$.
Thus, as there are $2^{k+1}$ possible orientations for a cycle with $k+1$ vertices, for large $n$, using Proposition~\ref{prop:simple}, we have
\begin{align}
\E X_2&\leq n\cdot \sum_{k=1}^2\binom{n-1}{k}2^{k+1}p_3^{k+1}\cdot \left(2\sum_{i=0}^{d}\binom{n-1-k}{i}p_1^{i}(1-p_1)^{n-1-k-i}\right)\nonumber\\
&\leq \sum_{k=1}^2(2p_3n)^{k+1} \cdot 2\exp(-p_1n+(\log n)/30)\leq 4\cdot (4\log n)^3\cdot \exp(-(\log n)/4)=o(1),\label{eq:key2}
\end{align}
where we have used that $p_1n=(1/2-o(1))\log n$.
Therefore,  with high probability \textbf{$\overline{\emph{\ref{rp2other}}}$} holds.

\medskip

\noindent\textbf{$\overline{\emph{\ref{rp5b}}}$:}  Let $X_3$ be the number of paths of length at most 4 in $\bar{K}_3$ between vertices in $\bar{S}_2$. By a similar calculation to \eqref{eq:key}, we have $\E X_3\leq 16n\cdot (4\log n)^4 \cdot \exp(-2p_2n+(\log n)/15)=o(1)$.
Let $X_4$ be the number of cycles of length at most 3 in $\bar{K}_3$ between vertices in $\bar{S}_2$.
By a similar calculation to \eqref{eq:key2}, we have $\E X_4\leq 4\cdot (4\log n)^3\cdot \exp(-p_2n+(\log n)/30)=o(1)$.
Therefore, with high probability $X_3=X_4=0$, and thus $\overline{\emph{\ref{rp5b}}}$ holds.

\medskip

\noindent\textbf{$\overline{\emph{\ref{rp5a}}}$:} First, let $X_5$ be the number of paths with length at most 3 between a vertex in $\bar{T}$ and a vertex in $\bar{S}_1$ in $\bar{K}_3$. Similarly to the analysis for \eqref{eq:key}, we have
\begin{align*}
\E X_5&\leq {n(n-1)}\cdot \sum_{k=0}^2\binom{n-2}{k}2^{k+1}p_3^{k+1}\cdot 2\left(\sum_{i=0}^{d}\binom{n-2-k}{i}p_1^{i}(1-p_1)^{n-2-k-i}\right)^3\\
&\leq 6n\cdot (4\log n)^3\cdot \exp(-3p_1n+(\log n)/10)=o(1).
\end{align*}
Let $X_6$ be the number of cycles with length at most 3 in $\bar{K}_3$ containing some vertex in $\bar{T}$. Then, similarly to the analysis for \eqref{eq:key2}, we have
\begin{align*}
\E X_6&\leq n\cdot \sum_{k=1}^2\binom{n-1}{k}(2p_3)^{k+1} \cdot \left(\sum_{i=0}^{d}\binom{n-1-k}{i}p_1^{i}(1-p_1)^{n-1-k-i}\right)^2\\
&\leq 2\cdot (4\log n)^3\cdot \exp(-2p_1n+(\log n)/15)=o(1).
\end{align*}
Therefore, with high probability, $X_5=X_6=0$, and hence $\overline{\emph{\ref{rp5a}}}$ holds.

\medskip

\noindent\textbf{$\overline{\emph{\ref{rp3}}}$:} Let $X_7$ be the number of vertices $v\in [n]$ with at least 3 out- or in -neighbours in $\bar{K}_3$ in $\bar{S}_1\cup N^+_{\bar{K}_3}(\bar{S}_1-v)\cup N^-_{\bar{K}_3}(\bar{S}_1-v)$. For each such vertex $v$, we can pick three different paths, $P_1$, $P_2$ and $P_3$ in $\bar{K}_3$, with length at most 2 which go from $v$ into $\bar{S}_1$. Letting $H=P_1\cup P_2\cup P_3$, we have that $H$ is a tree (with maybe some doubled edges) if the endvertices of the paths which are not $v$ are distinct. If $|V(H)\cap \bar{S}_1|=2$, then we see that $H$ has at least $|H|$ edges, while if $|V(H)\cap \bar{S}_1|=1$, then $H$ has at least $|H|+1$ edges. Thus, deleting edges if necessary, there is some digraph $H\subset \bar{K}_3$ and some $j\in [3]$ so that $v\in V(H)$, $j\leq |H|\leq 7$, $e(H)\geq |H|+2-j$ edges, and $V(H)$ contains $j$ vertices in $\bar{S}_1$.

Therefore, using Proposition~\ref{prop:simple}, we have, for large $n$, that
\begin{align*}
\E X_7&\leq n\cdot \sum_{j=1}^3\sum_{k=j}^7 \binom{n-1}{k-1}\cdot \binom{k(k-1)}{k+2-j}\cdot p_3^{k+2-j}\cdot  \left(\sum_{i=0}^{d}\binom{n-k}{i}p_1^{i}(1-p_1)^{n-i-k}\right)^{j}
\\
&\leq \sum_{j=1}^3\sum_{k=j}^7 (n p_3)^{k+2-j}\cdot n^{j-2}\cdot 2^{k(k-1)} \cdot  \exp(-j\cdot p_1n+j(\log n)/30)
\\
&\leq \log^9 n\cdot \sum_{j=1}^3n^{j-2}\cdot \exp(-(2j/5)\log n)=o(1).
\end{align*}
Therefore, with high probability, $X_7=0$, and thus $\overline{\emph{\ref{rp3}}}$ holds.

\medskip

\noindent\textbf{$\overline{\emph{\ref{rp8}}}$:} Let $t_0=(\log n)^{1/3}/200$, $t_1=100n\log\log n/\log n$, $d_0=(\log n)^{2/3}/2$ and $d_1=100(\log n)^{1/3}$.
Note that if there are some sets $A,B\subset[n]$ satisfying $|B|< d_1|A|$ and, for some $\diamond\in \{+,-\}$, $d^\diamond_{\bar{K}_3}(v,B)\geq d_0$  for each $v\in A$, we have $|B|\geq d_0$, so that $|A|>d_0/d_1=t_0$. Furthermore, for such a pair $A,B$, we can add vertices to $B$ to ensure that $|B|=d_1|A|$. Now, there is some $\diamond\in \{+,-\}$ and $t$ with $t_0\leq t\leq t_1$ and a pair of sets $A,B\subset [n]$ with $|A|=t$, $|B|=d_1t$, and $d^\diamond_{\bar{K}_3}(v,B)\geq d_0$ for each $v\in A$, with probability at most
\begin{align*}
2\sum_{t=t_0}^{t_1}\binom{n}{t}\binom{n}{d_1t}\binom{d_1t}{d_0}^{t}p_3^{d_0t}
&\leq  2\sum_{t=t_0}^{t_1}\left(\left(\frac{en}{t}\right)\cdot \left(\frac{en}{d_1t}\right)^{d_1}\cdot \left(\frac{ed_1tp_3}{d_0}\right)^{d_0}\right)^{t}\\
&\leq  2\sum_{t=t_0}^{t_1}\left(\left(\frac{en}{t}\right)^{d_1+1}\cdot \left(\frac{t(\log n)^{3/4}}{n}\right)^{d_0}\right)^{t}\\
&\leq  2\sum_{t=t_0}^{t_1}\left(e^{d_1+1}\cdot \left(\frac{t}{n}\right)^{d_0-d_1-1}\cdot (\log n)^{3d_0/4}\right)^{t}\\
&\leq  2\sum_{t=t_0}^{t_1}\left(e^{7d_0/8}\cdot \left(\frac{t}{n}\right)^{7d_0/8}\cdot (\log n)^{3d_0/4}\right)^{t}\\
&=  2\sum_{t=t_0}^{t_1}\left(\frac{et(\log n)^{6/7}}{n}\right)^{7d_0t/8}\leq   2\sum_{t=t_0}^{t_1}\left(\frac{1}{2}\right)^{7d_0t/8}\leq  \frac{4}{2^{7d_0t_0/8}}=o(1).
\end{align*}
Therefore, with high probability, $\overline{\emph{\ref{rp8}}}$ holds.

\medskip

\noindent\textbf{$\overline{\emph{\ref{rpnew}}}$:}
Let $X_8$ be the number of vertices in $\bar{K}_3$ with in-degree larger than $50\log n$ or out-degree larger than $50\log n$. Then,
\[
\E X_8\leq 2n\binom{n-1}{50\log n}p_3^{50\log n}\leq 2n\left(\frac{enp_3}{50\log n}\right)^{50\log n}\leq 2n\left(\frac{1}{10} \right)^{50\log n}=o(1).
\]
Therefore, with high probability, $\overline{\emph{\ref{rpnew}}}$ holds.
\end{proof}
\fi

We now prove the properties \emph{\ref{rp7other}}--\emph{\ref{rp7}} are likely to hold. In the following lemma, the random digraphs do not interact in the properties (nor is $K_0$ used), but for convenience we use the same distribution for the random digraphs as in Lemma~\ref{lem:keyrandprops}.

\begin{lemma}\label{lem:keyrandprops2}
  %Let $D_0,D_1,\ldots,D_{n(n-1)}$ be the random digraph process. Let $n\in\N$ and let $i_0,i_1,i_2$ and $i_3$ be as in \eqref{iseq}.
  Let $(K_0,K_1,K_2,K_3)$ be drawn uniformly at random from the set of such tuples for which, for each $j\in \{0,1,2,3\}$, $K_j$ is a digraph with vertex set $[n]$ and $i_j$ edges (as set in~\eqref{iseq}), and $K_0\subset K_1\subset K_2\subset K_3$.

Then, with high probability, the following hold.
\begin{enumerate}[label = \textbf{\emph{\Alph{propcounter}\arabic{enumi}}}]\addtocounter{enumi}{9}
\item There is some $v\in [n]$ with $d^+_{K_1}(v)+d^-_{K_1}(v)=0$. \label{rp7other}
\item The number of vertices in $K_2$ with in-degree 0 is at least $n^{1/5}$.\label{rp4}
\item $\delta^\pm(K_3)\geq 2$.\label{rp7}
\end{enumerate}
\end{lemma}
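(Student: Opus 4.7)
The plan is to establish each of \emph{\ref{rp7other}}--\emph{\ref{rp7}} via a coupling of $(K_0,K_1,K_2,K_3)$ with an appropriately perturbed binomial random digraph, in the spirit of the coupling used in the proof of Lemma~\ref{lem:keyrandprops}, but with the direction of the perturbation chosen so that each desired property transfers cleanly from the binomial to the uniform model. Setting $N=n(n-1)$, since \emph{\ref{rp7other}} and \emph{\ref{rp4}} are decreasing events under edge-addition, I would couple so that $K_j\subset \bar{K}_j^+:=D(n,(i_j+n)/N)$ for $j\in\{1,2\}$ on a high-probability event analogous to $\mathbf{E}$; the existence of an isolated (resp.\ in-degree-$0$) vertex of $\bar{K}_j^+$ then transfers to $K_j$. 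For the increasing event \emph{\ref{rp7}}, I would couple so that $\bar{K}_3^-:=D(n,(i_3-n)/N)\subset K_3$; then $\delta^\pm(\bar{K}_3^-)\geq 2$ transfers to $K_3$. In each case the relevant coupling event has probability $1-o(1)$ by a Chernoff bound as in~\eqref{sizecalc}, so it suffices to verify each property for the relevant binomial digraph.

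For \emph{\ref{rp7other}}, note $2(n-1)p_1^+ = \log n - 2\log\log n + O(1)$. Let $X$ count isolated vertices of $\bar{K}_1^+$; then
\[
\E X = n(1-p_1^+)^{2(n-1)} = e^{-2+o(1)}(\log n)^2 \to \infty.
\]
For the second moment I would use that for distinct $u,v\in[n]$ the sets of edges incident to $u$ and to $v$ overlap only in the pair $\{(u,v),(v,u)\}$, giving $\P(u,v\text{ both isolated})=(1-p_1^+)^{-2}\P(u\text{ iso})\P(v\text{ iso})$, and hence $\mathrm{Var}(X)\leq \E X + 2p_1^+(\E X)^2 = o((\E X)^2)$. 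Chebyshev's inequality then yields $X\geq 1$ with high probability.

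For \emph{\ref{rp4}}, $(n-1)p_2^+ = (3/4+o(1))\log n$. Let $Y$ count in-degree-$0$ vertices in $\bar{K}_2^+$; then
\[
\E Y = n(1-p_2^+)^{n-1} = n^{1/4+o(1)}.
\]
Crucially, the sets of potential in-edges into distinct vertices are disjoint, so the indicator variables $\{Y_v:v\in[n]\}$ are mutually independent in $\bar{K}_2^+$, giving $\mathrm{Var}(Y)=\E Y$. Chebyshev then yields $Y\geq \E Y/2\geq n^{1/5}$ with high probability.

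For \emph{\ref{rp7}}, with $(n-1)p_3^- = \log n + 2\log\log n + O(1)$, a direct first-moment computation gives
\[
\E\bigl[\#\{v\in[n]:d^\pm_{\bar{K}_3^-}(v)\leq 1\}\bigr] \leq n\bigl((1-p_3^-)^{n-1} + (n-1)p_3^-(1-p_3^-)^{n-2}\bigr) = O(1/\log n) = o(1),
\]
so Markov's inequality yields $\delta^\pm(\bar{K}_3^-)\geq 2$, and therefore $\delta^\pm(K_3)\geq 2$, with high probability. The only nontrivial piece of bookkeeping is setting up the three couplings with the correct perturbation direction for each property; each moment calculation itself is routine.
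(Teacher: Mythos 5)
Your coupling framework is exactly the paper's: for the decreasing events \emph{\ref{rp7other}} and \emph{\ref{rp4}} you compare against $D(n,(i_j+n)/N)$, and for the increasing event \emph{\ref{rp7}} against $D(n,(i_3-n)/N)$, in each case passing through a Chernoff bound on the edge count as in~\eqref{sizecalc}; your computations for \emph{\ref{rp4}} (independent in-degree indicators) and \emph{\ref{rp7}} (first moment on vertices of small in- or out-degree) match the paper's line for line, with Chebyshev in place of the paper's Chernoff for \emph{\ref{rp4}} being an immaterial substitution. The one genuine divergence is \emph{\ref{rp7other}}: you run a direct second-moment computation on the number of isolated vertices of the binomial digraph, carefully accounting for the two shared potential edges $\{(u,v),(v,u)\}$ between a pair of vertices to get $\mathrm{Var}(X)=o((\E X)^2)$; your bookkeeping there is correct. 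The paper instead realizes $D(n,p_1)$ by first sampling an undirected $G(n,p)$ with $p=p_1(2-p_1)=(\log n-\omega(1))/n$ and then randomly orienting/doubling edges, so that an isolated vertex of $G$ is automatically isolated in the digraph, and then cites the classical result that $G(n,p)$ has an isolated vertex with high probability below the $\log n/n$ threshold. Your route is more self-contained (no external citation, no auxiliary undirected model) at the price of a variance estimate; the paper's is shorter on the page once the undirected threshold is taken as known. Both are sound.
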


\ifproofdone

\bigskip

\noindent\textbf{Proof of Lemma~\ref{lem:keyrandprops2} is complete.}
\else

\begin{proof}

\medskip

\noindent\textbf{\emph{\ref{rp7other}}:} Let $p_1=(i_1+n)/n(n-1)$. We will construct a random digraph $D_1$ with the same distribution as $D(n,p_1)$. Similarly as for \eqref{sizecalc}, by Lemma~\ref{chernoff} we will then have that $\P(e(D_1)\geq i_1)=1-o(1)$, and thus, it is sufficient to show that, with high probability, there is some $v\in [n]$ with $d^+_{D_1}(v)+d^-_{D_1}(v)=0$.

Let then $q=p_1/(4-2p_1)$ and $p=p_1(2-p_1)$, and let $G=G(n,p)$. Form $D_1$ on the vertex set $[n]$ by taking each edge $uv\in E(G)$ and, independently at random, adding $uv$ but not $vu$ to $E(D_1)$ with probability $(1/2-q)$, adding $vu$ but not $uv$ to $E(D_1)$ with probability $(1/2-q)$, and adding both $uv$ and $vu$ to $E(D_1)$ with probability $2q$.
Note that, as $p(1/2+q)=p_1$ and $2pq=p_1^2$, $D_1$ has the same distribution as $D(n,p_1)$. As $p=(\log n-\omega(1))/n$, with high probability $\delta(G)=0$ (see, for example, Theorems 3.5 and 2.2(ii) in~\cite{bollorandomgraphs}).
Furthermore, any $v\in [n]$ with $d_G(v)=0$ satisfies $d_{D_1}^+(v)=d_{D_1}^-(v)=0$, and thus  \emph{\ref{rp7other}} holds with high probability.

\begin{comment}
\medskip

\noindent\textbf{\emph{\ref{rp4}}:} Let $p_2=(i_2+n)/n(n-1)$ and $D_2=D(n,p_2)$. Similarly as for \eqref{sizecalc}, by Lemma~\ref{chernoff}, we have that $\P(e(D_2)\geq i_2)=1-o(1)$. Therefore, it is sufficient to show that, with high probability, the number of vertices in $D_2$ with in-degree 0 is at least $n^{1/5}$.

Let $m=n/\log^2n$. Take a set $A\subset [n]$ with $|A|=m$, and reveal the edges in $D_2[A]$. Note that $\E(e(D_2[A]))=m^2p_2=o(m)$. Let $F$ be the event that $D_2$ contains at most $m/5$ edges. By Markov's inequality, with high probability $F$ occurs, and hence there is a set $A_0\subset A$ of at least $m/2$ vertices in no edges in $D_2[A]$.

 Reveal the rest of the edges in $D_2$. Note that, for each $v\in A_0$, the probability that $d^-_{D_2}(v,[n]\setminus A)=0$ is $(1-p_2)^{n-m}=\exp(-(1-o(1))p_2n)=\exp(-(3/4-o(1))\log n)$. Furthermore, this is independent for each $v\in A_0$, and the expected number of vertices in $A_0$ with $d^-_{D_2}(v,[n]\setminus A)=0$ is $m\cdot \exp(-(3/4-o(1))\log n)=\omega(n^{1/5})$.
 Therefore, by Lemma~\ref{chernoff}, the probability that the number of vertices in $D_2$ with in-degree 0 is at least $n^{1/5}$ is $1-o(1)$. Thus, with high probability, $\emph{\ref{rp4}}$ holds.
 \end{comment}

\medskip

\noindent\textbf{\emph{\ref{rp4}}:} Let $p_2=(i_2+n)/n(n-1)$ and $D_2=D(n,p_2)$. Similarly as for \eqref{sizecalc}, by Lemma~\ref{chernoff}, we have that $\P(e(D_2)\geq i_2)=1-o(1)$. Therefore, it is sufficient to show that, with high probability, the number of vertices in $D_2$ with in-degree 0 is at least $n^{1/5}$.
Note that, for each $v\in [n]$, the probability that $d^-_{D_2}(v)=0$ is $(1-p_2)^{n-1}=\exp(-(1-o(1))p_2n)=\exp(-(3/4-o(1))\log n)$. Furthermore, this is independent for each $v\in [n]$, and the expected number of vertices in $[n]$ with $d^-_{D_2}(v)=0$ is $n\cdot \exp(-(3/4-o(1))\log n)=\omega(n^{1/5})$.
 Therefore, by Lemma~\ref{chernoff}, the probability that the number of vertices in $D_2$ with in-degree 0 is at least $n^{1/5}$ is $1-o(1)$. Thus, with high probability, $\emph{\ref{rp4}}$ holds.

\medskip

\noindent\textbf{\emph{\ref{rp7}}:} Let $p_3=(i_3-n)/n(n-1)$, so that $p_3(n-2)=\log n+\log\log n+\omega(1)$. Let $D_3=D(n,p_3)$. Similarly as for \eqref{sizecalc}, by Lemma~\ref{chernoff}, we have that $\P(e(D_3)\leq i_3)=1-o(1)$. Therefore, it is sufficient to show that, with high probability, $\delta^\pm(D_3)\geq 2$.

Let $X$ be the number of vertices $v\in[n]$ with $d^+_{D_3}(v)\leq 1$ or $d^-_{D_3}(v)\leq 1$. Then,
\[
\E X\leq 2n\cdot \sum_{i=0}^1\binom{n-1}{i}p_3^i(1-p_3)^{n-1-i}\leq 4n\cdot (np_3)\cdot \exp(-p_3(n-2))=o(1),
\]
so that, with high probability $X=0$, and hence $\delta^\pm(D_3)\geq 2$. Thus, with high probability, \emph{\ref{rp7}} holds.
\end{proof}
\fi
%%%%%%%%%%%%%%%%%%%%%%%%%%%%%%%%%%%%%%%%%%%%%%%%%%%%%%%%%%%%%%%%%%%%%%%%%%%%%%%%%%%%%%%%%%%%%%%%%%%%%%%%%%%%%%%%%%%%%%%%%%%%%%%%%%%%%%%%%%%%%%
%%%%%%%%%%%%%%%%%%%%%%%%%%%%%%%%%%%%%%%%%%%%%%%%%%%%%%%%%%%%%%%%%%%%%%%%%%%%%%%%%%%%%%%%%%%%%%%%%%%%%%%%%%%%%%%%%%%%%%%%%%%%%%%%%%%%%%%%%%%%%%
%%%%%%%%%%%%%%%%%%%%%%%%%%%%%%%%%%%%%%%%%%%%%%%%%%%%%%%%%%%%%%%%%%%%%%%%%%%%%%%%%%%%%%%%%%%%%%%%%%%%%%%%%%%%%%%%%%%%%%%%%%%%%%%%%%%%%%%%%%%%%%
%%%%%%%%%%%%%%%%%%%%%%%%%%%%%%%%%%%%%%%%%%%%%%%%%%%%%%%%%%%%%%%%%%%%%%%%%%%%%%%%%%%%%%%%%%%%%%%%%%%%%%%%%%%%%%%%%%%%%%%%%%%%%%%%%%%%%%%%%%%%%%
%%%%%%%%%%%%%%%%%%%%%%%%%%%%%%%%%%%%%%%%%%%%%%%%%%%%%%%%%%%%%%%%%%%%%%%%%%%%%%%%%%%%%%%%%%%%%%%%%%%%%%%%%%%%%%%%%%%%%%%%%%%%%%%%%%%%%%%%%%%%%%
%%%%%%%%%%%%%%%%%%%%%%%%%%%%%%%%%%%%%%%%%%%%%%%%%%%%%%%%%%%%%%%%%%%%%%%%%%%%%%%%%%%%%%%%%%%%%%%%%%%%%%%%%%%%%%%%%%%%%%%%%%%%%%%%%%%%%%%%%%%%%%
%%%%%%%%%%%%%%%%%%%%%%%%%%%%%%%%%%%%%%%%%%%%%%%%%%%%%%%%%%%%%%%%%%%%%%%%%%%%%%%%%%%%%%%%%%%%%%%%%%%%%%%%%%%%%%%%%%%%%%%%%%%%%%%%%%%%%%%%%%%%%%

\subsection{Choosing vertices in the cycle}\label{subsec:choosevx}
When embedding each cycle for Theorem~\ref{thmrandprocess}, we have to use the vertices in the cycle with out-degree 0, 2 and 1 to cover vertices in the random digraph $D_i$ with out-degree 0, in-degree 0, and both in- and out-degree 1, respectively. To cover the other low in- or out-degree vertices in $D_i$, we have more choice. We will select the vertices in the cycle to use with the following lemma, which, furthermore, picks the vertices to be in some linear length subpath of the cycle.

\begin{lemma}\label{pathincycle} There is some $n_0$ such that the following holds for each $n\geq n_0$ and $\lambda\in \N$.
Suppose $C$ is an oriented  $n$-vertex cycle with $\lambda$ vertices with out-degree 0. Let $\mu_0,\mu_2\in \N$ with $\mu_0+\mu_2= \lceil 2\lambda/\log n\rceil$ and let $\mu_1= \lceil (n-2\lambda)/\log n\rceil$.
 Then, there exists a path $P\subset C$ with length at most $n/100$ and vertex sets $Z_0,Z_1,Z_2\subset V(P)$ such that the following hold.
\begin{itemize}
%\item $|Z_0|,|Z_2|\geq \lambda(C)/\log n$ and $|Z_1|\geq \min\{(n-\lambda(C))/\log n,n^{2/3}\}$
\item The vertices in $Z_0\cup Z_1\cup Z_2$ are pairwise at least $100\log n/\log\log n$ apart on $P$ from each other.%and from the endvertices of $P$.
\item For each $i\in \{0,1,2\}$, $|Z_i|=\mu_i$ and each vertex $v\in Z_i$ has out-degree $i$ in $C$.% in $Z_0$, $Z_1$ and $Z_2$ has out-degree 2,1 and 0 in $P$, respectively.
\end{itemize}
\end{lemma}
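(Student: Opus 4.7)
The plan is to prove Lemma~\ref{pathincycle} in two stages: first, locate a short subpath $P \subset C$ that simultaneously contains many vertices of each of the three out-degree types, and then greedily pick the well-spaced subsets $Z_0, Z_1, Z_2$ within $P$.

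For the first stage, observe that in any oriented cycle the number of out-degree~$0$ vertices equals the number of out-degree~$2$ vertices (both equal $\lambda$), so there are $n-2\lambda$ vertices of out-degree~$1$. Enumerate $V(C)$ cyclically as $v_0,\ldots,v_{n-1}$ and, for each starting index $j$, let $P_j$ be the arc on $L+1$ vertices starting at $v_j$, where $L = \lfloor n/100 \rfloor$, and let $f_i(j)$ be the number of out-degree-$i$ vertices in $P_j$. Each vertex of $C$ lies in exactly $L+1$ such arcs, giving $\sum_j f_i(j) = \lambda_i(L+1)$, with $\lambda_0=\lambda_2=\lambda$ and $\lambda_1=n-2\lambda$. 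A Markov-type estimate then bounds, for each $i$, the set of starting positions where $f_i$ is substantially below its average $\lambda_i(L+1)/n$. Choosing the thresholds with sufficient slack and applying a union bound over $i\in\{0,1,2\}$, I obtain some $j^\ast$ for which $P_{j^\ast}$ simultaneously contains $\Omega(\lambda_i L/n)$ vertices of out-degree~$i$ for every $i$. Set $P = P_{j^\ast}$.

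For the second stage, let $d = \lceil 100\log n/\log\log n\rceil$ and partition $P$ into consecutive blocks of length $d$. I build $Z_0, Z_1, Z_2$ greedily, walking along $P$ and, whenever a vertex is chosen for some $Z_i$, forbidding the next $d$ positions (so that all selected vertices end up pairwise at distance at least $d$ apart). Since the first stage guarantees that $P$ contains a number of out-degree-$i$ vertices exceeding $\mu_i$ by a factor of order $\log n$ (recall $\mu_0, \mu_2 \leq \lceil 2\lambda/\log n\rceil$ and $\mu_1 = \lceil(n-2\lambda)/\log n\rceil$, while the counts in $P$ are of order $\lambda_i/100$), and each selection removes only $O(1)$ blocks from contention, the greedy procedure completes all $\mu_0+\mu_1+\mu_2$ selections.

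The main obstacle is the \emph{simultaneous} condition in Stage~1: per-type averaging alone produces, for each $i$ separately, a good arc, but not necessarily a single arc good for all three types at once. Resolving this by carefully bounding the three bad sets of starting positions and showing their union does not cover the cycle is the key step; the remaining greedy selection in Stage~2 is routine given the comfortable slack between the availability in $P$ and the required $\mu_i$.
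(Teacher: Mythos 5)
Your Stage~1 union bound does not go through, and you have correctly flagged the spot where it breaks: there is no guarantee that the three bad sets $B_0,B_1,B_2$ of starting positions fail to cover the cycle. Consider, for instance, a cycle in which all $n-2\lambda$ out-degree-$1$ vertices form a single contiguous block and the remaining $2\lambda$ vertices (alternating out-degree $0$ and $2$) form the complementary block, with $\lambda = n/3$. Then arcs of length $L=n/100$ lying inside the alternating block (roughly $2n/3$ starting positions) have $f_1=0$, while arcs lying inside the out-degree-$1$ block (roughly $n/3$ positions) have $f_0=f_2=0$; with any constant-factor threshold below the mean one gets $|B_0|+|B_1|+|B_2| > n$. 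The lemma is still true for this cycle — the good starting positions are exactly those near the boundary between the two blocks — but per-type averaging combined with a union bound cannot see them.

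The paper avoids this by reducing the three constraints to one. It tracks the single function $f(j)=\tfrac{n-2\lambda}{n}(\ell+1)-|V(P_j)\cap X_1|$, which sums to zero over all cyclic shifts and changes by at most $1$ at each step, so by a discrete intermediate-value argument some $j$ has $|f(j)|\le 1$. At such a $j$, the arc contains roughly the ``right'' proportion of out-degree-$1$ vertices, hence also the right proportion of vertices of out-degree $0$ or $2$; and because out-degree-$0$ and out-degree-$2$ vertices strictly alternate on $C$, their counts in any arc differ by at most one, which gives the lower bounds on $|V(P_j)\cap X_0|$ and $|V(P_j)\cap X_2|$ simultaneously and for free. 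You would need to add that structural observation (alternation plus the IVT on a single coordinate) to make your Stage~1 work; ``bounding the three bad sets separately'' cannot be made to succeed. Two smaller points: the paper also treats several boundary cases ($\lambda=0$, $n-2\lambda=0$, $\lambda\le(\log n)/2$, $n-2\lambda\le(\log n)/2$) where the averaging argument does not directly apply, and in Stage~2 it selects the smaller of $Z_0\cup Z_2$ and $Z_1$ first and argues by contradiction, rather than a one-pass greedy, which avoids having to reason about wasted slots; your greedy is plausible but would need a more careful accounting than ``comfortable slack.''
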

\ifproofdone

\bigskip

\noindent\textbf{Proof of Lemma~\ref{pathincycle} is complete.}
\else

\begin{proof} For each $i\in \{0,1,2\}$, let $X_i=\{v\in V(C):d_C^+(v)=i\}$. Let $k=100\log n/\log\log n$ and $\ell=\lfloor n/100\rfloor$. We will first choose the path $P$ using the following claim.

\begin{claim} There is a path $P$ with length $\ell$ such that the following hold.\label{clm:goodP}
 \stepcounter{propcounter}
\begin{enumerate}[label = \textbf{\Alph{propcounter}\arabic{enumi}}]
\item If $\mu_0>0$, then $|V(P)\cap X_0|\geq 1+(\mu_0+\mu_2-1)\log n/10^3$.\label{mu0}
\item If $\mu_1>0$, then $|V(P)\cap X_1|\geq 1+(\mu_1-1)\log n/10^3$.\label{mu1}
\item If $\mu_2>0$, then $|V(P)\cap X_2|\geq 1+(\mu_0+\mu_2-1)\log n/10^3$.\label{mu2}
\end{enumerate}
\end{claim}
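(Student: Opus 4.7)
The plan is to reduce the claim to an averaging argument over the gaps between consecutive direction-change vertices of $C$. Since each vertex $v \in V(C)$ lies in $X_0$, $X_1$, or $X_2$ according to whether both incident edges point toward $v$, one points in and one out, or both point away, the direction changes (vertices of $X_0 \cup X_2$) alternate cyclically around $C$. Hence $|X_0|=|X_2|=\lambda$ and $|X_1|=n-2\lambda$. Label the direction-change vertices as $u_1,\dots,u_{2\lambda}$ in cyclic order, with $u_k \in X_0$ for $k$ odd, and let $d_k$ denote the number of edges of $C$ from $u_k$ to $u_{k+1}$ (indices mod $2\lambda$), so that $\sum_k d_k = n$.

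The key observation is that a length-$\ell$ subpath $P$ of $C$ whose direction-change vertices are exactly $u_k, u_{k+1}, \dots, u_{k+r-1}$ satisfies $|V(P)\cap X_0|, |V(P)\cap X_2| \geq \lfloor r/2 \rfloor$ (by alternation) and $|V(P)\cap X_1| = \ell+1-r$. Writing $T_0 = T_2 = 1 + (\mu_0+\mu_2-1)\log n/10^3$ and $T_1 = 1+(\mu_1-1)\log n/10^3$, the claim therefore reduces to finding a length-$\ell$ window in $C$ containing some number $r' \in [2T_0, \ell+1-T_1]$ of direction-change vertices. Using $\mu_0+\mu_2 \leq 2\lambda/\log n + 1$ and $\mu_1 \leq (n-2\lambda)/\log n + 1$, the bounds $T_0 \leq 1+2\lambda/10^3$ and $T_1 \leq 1+(n-2\lambda)/10^3$ give $2T_0 + T_1 \leq 3 + (2\lambda+n)/10^3 \leq 3 + n/500$, which is strictly less than $\ell+1$ for $n$ large, so this interval for $r'$ is non-empty.

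I then split on the value of $\mu_0+\mu_2 = \lceil 2\lambda/\log n\rceil$. If $\mu_0+\mu_2 \leq 1$, at most one of the $X_0,X_2$-conditions is active and its threshold equals $1$, so any length-$\ell$ subpath containing the single required direction-change vertex suffices; the $X_1$ bound is then immediate since $|X_1|=n-2\lambda \geq n-\log n$ is huge. If $\mu_0+\mu_2 \geq 2$, then $\lambda > \log n/2$ is large, and I set $r = \lceil 2T_0 \rceil$. Averaging the cyclic sums $\sum_{i=0}^{r-2}d_{k+i}$ over $k \in [2\lambda]$ gives some $k$ with this sum at most $(r-1)n/(2\lambda)$, and combining with $r \leq 3 + \lambda/250$ and $\lambda \geq \log n/2$ shows this sum is at most $\ell$. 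Hence the $r$ direction-change vertices $u_k,\dots,u_{k+r-1}$ fit inside a window of length $\ell$, and the window can be extended within the adjacent directed arcs on either side to reach length exactly $\ell$; the slack $2T_0 + T_1 \leq 3 + n/500$ ensures any additional direction-change vertices picked up do not break the upper bound $r' \leq \ell+1-T_1$.

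The main obstacle will be handling the boundary of the case split, in particular when $\lambda$ is of order $\log n$ (the transition between $\mu_0+\mu_2 = 1$ and $\mu_0+\mu_2 \geq 2$) and similarly when $n-2\lambda$ is of order $\log n$; these regimes are addressed by the same arguments with the constants in the threshold inequalities slightly adjusted, and all relevant inequalities are slack by a constant factor once $n \geq n_0$ is sufficiently large.
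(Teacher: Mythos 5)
Your reduction is right: by the alternation of $X_0$ and $X_2$ around $C$, it suffices to find a length-$\ell$ window whose number $r'$ of direction-change vertices lies in an interval of the form $[\,\approx 2T_0,\ \ell+1-T_1\,]$, and your small cases ($\mu_0+\mu_2\leq 1$, and implicitly $\mu_1=0$) are handled in essentially the same way as the paper's Cases 1--2. The paper achieves the reduction by a discrete intermediate-value argument over the $n$ cyclic windows $P_1,\dots,P_n$: the quantity $|V(P_i)\cap X_1|$ (equivalently $r'(i)=\ell+1-|V(P_i)\cap X_1|$) changes by at most $1$ as $i$ increments and has cyclic average $\tfrac{n-2\lambda}{n}(\ell+1)$, so some $P_j$ lands within $1$ of the proportional value, which pins $r'(j)$ from above \emph{and} below simultaneously.

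Your main-case argument is genuinely different and, as written, has a gap. Averaging the partial sums $\sum_{i=0}^{r-2}d_{k+i}$ over $k\in[2\lambda]$ only produces $r$ consecutive direction-change vertices inside a \emph{short} span $L\leq (r-1)n/(2\lambda)$. Extending that span to length exactly $\ell$ adds $\ell-L$ new vertices, and in the worst case these can all be direction changes: for instance, if all $2\lambda$ direction changes sit in a single consecutive block of $C$ (with $2\lambda\geq \ell+1$, which is possible here since only $\lambda>\tfrac{1}{2}\log n$ is assumed), then the averaging puts $u_k,\dots,u_{k+r-1}$ inside that block with $L=r-1$, and \emph{every} length-$\ell$ window containing them has $r'=\ell+1$ and $|V(P)\cap X_1|=0<T_1$. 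The slack $2T_0+T_1\leq 3+n/500$ only certifies that the target interval for $r'$ is nonempty; it does not constrain where, in $[r,\ell+1]$, the extended window's $r'$ actually falls, and the phrase ``extended within the adjacent directed arcs'' does not help when those arcs have length $1$. To repair the argument you would want to apply the same discrete intermediate-value idea, but to the window-level count $r'(i)$ directly (noting $|r'(i)-r'(i+1)|\leq 1$ and that the cyclic average $\tfrac{2\lambda}{n}(\ell+1)$ lies strictly inside the target interval once $\lambda,n-2\lambda>\tfrac{1}{2}\log n$), which is exactly what the paper does and which renders the extension step unnecessary.
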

\begin{proof}[Proof of Claim~\ref{clm:goodP}]
If $\lambda=0$ or $n-2\lambda=0$, then let $P$ be any path in $C$ with length $\ell$. Note that in the first case $\mu_0+\mu_2=0$ and $|V(P)\cap X_1|=|P|$, and thus \ref{mu0}--\ref{mu2} hold. In the second case, $\mu_1=0$ and $|V(P)\cap (X_0\cup X_2)|=|P|$, and therefore, as any two vertices with out-degree 0 in $C$ on $P$ must have some vertex of in-degree 0 in $C$ between them, we have $|V(P)\cap X_0|,|V(P)\cap X_2|\geq |P|/2-1$. Thus, \ref{mu0}--\ref{mu2} hold in this case.

If $0<\lambda\leq (\log n)/2$, then $\mu_0+\mu_2= 1$, so that either $\mu_0=0$ or $\mu_2=0$. If the first case occurs, then, using $\lambda>0$, let $P$  be any path in $C$ with length $\ell$ with $|V(P)\cap X_2|\geq 1$, and, otherwise, let $P$ be any path in $C$ with length $\ell$ and $|V(P)\cap X_0|\geq 1$. Note that, in each case, we have
\[
|V(P)\cap X_1|\geq \ell+1-2\lambda\geq \frac{n}{200}\geq 1+\frac{\mu_1\log n}{10^3}.
\]
Thus, we have that \ref{mu0}--\ref{mu2} hold.

If $0<n-2\lambda\leq (\log n)/2$, then $\mu_1= 1$. Using that $n-2\lambda>0$, let $P$  be any path in $C$ with length $\ell$ with $|V(P)\cap X_1|\geq 1$. Note that we have $|V(P)\cap (X_0\cup X_2)|\geq \ell+1-(n-2\lambda)\geq n/200$. As any two vertices with out-degree 0 in $C$ on $P$ must have some vertex of in-degree 0 in $C$ between them, we have that
\[
|V(P)\cap X_0|,|V(P)\cap X_2|\geq \frac{n}{450}\geq \frac{\lambda}{450}\geq 1+\frac{(\mu_0+\mu_2)\log n}{10^3}.
\]
Thus, we have that \ref{mu0}--\ref{mu2} hold.

Assume then that $\lambda>(\log n)/2$ and $n-2\lambda>(\log n)/2$. Pick an arbitrary direction of $C$, and label the vertices of $C$ as $v_1,\ldots,v_n$ in this order. For each $i\in [n]$, let $P_i$ be the path $v_iv_{i+1}\ldots v_{i+\ell}$, with addition modulo $n$ in the indices.
For each $i\in [n]$, let $f(i)=\frac{n-2\lambda}{n}(\ell+1)-|V(P_i)\cap X_1|$. Let $f(n+1)=f(1)$.
Note that
\[
\sum_{i\in [n]}f(i)=(n-2\lambda)(\ell+1)-|V(P_i)\cap X_1|(\ell+1)=0.
\]
As $|f(i)-f(i+1)|\leq 1$ for each $i\in [n]$, we can thus choose $j\in [n]$ with $|f(j)|\leq 1$. Then, as $n-2\lambda>(\log n)/2$, we have
\begin{align*}
|V(P_j)\cap X_1|&\geq \frac{n-2\lambda}{n}(\ell+1)-1\geq \frac{n-2\lambda}{100}-1\geq 1+\frac{n-2\lambda}{200}\geq 1+ \frac{(\mu_1-1)\log n}{10^3}
\end{align*}
and, as $|V(P_j)\cap X_1|\leq \frac{n-2\lambda}{n}(\ell+1)+1= (\ell+1)-\frac{2\lambda}{n}(\ell+1)+1$, and $\lambda>(\log n)/2$, we have
\[
|V(P_j)\cap (X_0\cup X_2)|\geq \frac{2\lambda}{n}(\ell+1)-1\geq \frac{2\lambda}{100}-1\geq 3+\frac{2\lambda}{200}\geq 3+\frac{2(\mu_0+\mu_2-1)\log n}{10^3}.
\]
As any two vertices in $P_j$ with out-degree 0 on $C$ must have some vertex of in-degree 0 in $C$ between them in $P_j$, we have that $|V(P_j)\cap X_0|,|V(P_j)\cap X_2|\geq 1+(\mu_0+\mu_2-1)\log n/10^3$.
Let $P=P_j$. In every case, we have now chosen a path $P$ with length $\ell$ such that \ref{mu0}--\ref{mu2} hold.
\renewcommand{\qedsymbol}{$\boxdot$}
\end{proof}
\renewcommand{\qedsymbol}{$\square$}

Given the path $P$ as in Claim~\ref{clm:goodP}, we now pick the sets $Z_0,Z_1$ and $Z_2$. We do this in two cases, according to whether $\mu_0+\mu_2\leq \mu_1$ or $\mu_0+\mu_2> \mu_1$. In each case, we pick the smaller of $Z_0\cup Z_2$ and $Z_1$ first.

\medskip

\noindent\textbf{Case I.}
Suppose that $\mu_0+\mu_2\leq \mu_1$. Pick vertex sets $Z_0\subset X_0$ and $Z_2\subset X_2$ so that the vertices in $Z_0\cup Z_2$ are pairwise at least $k$ apart on $P$, $|Z_0|\leq \mu_0$, $|Z_2|\leq \mu_2$, and, subject to this, $|Z_0\cup Z_2|$ is maximised.
Suppose, for contradiction, that $|Z_0|<\mu_0$. If $|Z_0|=0$ and $\mu_2=0$, then $Z_2=\emptyset$ and, by \ref{mu0}, we have $|V(P)\cap X_0|\geq 1$, so that there is a vertex $z\in V(P)\cap X_0$ which is a distance at least $k$ apart from every vertex in $Z_1\cup Z_2=\emptyset$ on $P$, a contradiction. Therefore, we can assume that $|Z_0|>0$ or $\mu_2>0$. In each case, we have, as $\mu_0>|Z_0|$, that $\mu_0+\mu_2\geq 2$. Hence, by \ref{mu0}, $|V(P)\cap X_0|\geq (\mu_0+\mu_2)\log n/(2\cdot 10^3)$.
Now, every vertex in $V(P)\cap X_0$ is within distance $k-1$ of some vertex in $Z_0\cup Z_2$ on $P$, so that
\[
|V(P)\cap X_0|\leq (2k-1)|Z_0\cup Z_2|< (2k-1)(\mu_0+\mu_2)\leq \frac{10^3(\mu_0+\mu_2)\log n}{\log\log n}.
\]
For sufficiently large $n$ this contradicts $|V(P)\cap X_0|\geq (\mu_0+\mu_2)\log n/(2\cdot 10^3)$. Similarly, we get a contradiction if $|Z_2|<\mu_2$. Therefore, we have $|Z_0|=\mu_0$ and $|Z_2|=\mu_2$.

Let then $Z_1\subset X_1$ be a maximal set subject to $|Z_1|\leq \mu_1$ and that the vertices in $Z_0\cup Z_1\cup Z_2$ are pairwise at least $k$ apart on $P$. Suppose, for contradiction, that $|Z_1|<\mu_1$. Then, every vertex in $X_1$ is within distance $k-1$ of some vertex in $Z_0\cup Z_1\cup Z_2$ on $P$, so that
\[
|V(P)\cap X_1|\leq (2k-1)|Z_0\cup Z_1\cup Z_2|< (2k-1)(\mu_0+\mu_1+\mu_2)\leq (2k-1)\cdot 2\mu_1\leq \frac{10^3\mu_1\log n}{\log\log n}.
\]
As $\mu_1\geq \mu_0+\mu_2$ (and $\mu_0+\mu_1+\mu_2\geq n/\log n$), we have $\mu_1\geq 2$. Therefore, for sufficiently large $n$, this contradicts \ref{mu1}. Thus, we have $|Z_1|=\mu_1$, and $Z_0,Z_1,Z_2$ and $P$ satisfy the conditions in the lemma.

\medskip

\noindent\textbf{Case II.}
Assume then that $\mu_0+\mu_2> \mu_1$. Let $Z_1\subset X_1$ be a maximal set subject to $|Z_1|\leq \mu_1$ and that the vertices in $Z_1$ are pairwise at least $k$ apart on $P$.
Suppose, for contradiction, that $|Z_1|<\mu_1$. If $|Z_1|=0$, then $\mu_1> 0$ and, by \ref{mu1}, we have $|V(P)\cap X_1|\geq 1$, so that there is a vertex $z\in V(P)\cap X_1$ which is a distance at least $k$ apart from every vertex in $Z_1=\emptyset$, a contradiction. Therefore, we can assume that $\mu_1>|Z_1|>0$, and hence, by \ref{mu1}, that $|V(P)\cap X_1|\geq \mu_1\log n/(2\cdot 10^3)$. Then, every vertex in $X_1$ is within distance $2k-1$ of $Z_1$ on $P$, so that
\[
|V(P)\cap X_1|\leq (2k-1)|Z_1|< (2k-1)\mu_1\leq \frac{10^3\mu_1\log n}{\log\log n}.
\]
For sufficiently large $n$ this contradicts $|V(P)\cap X_1|\geq \mu_1\log n/(2\cdot 10^3)$. Therefore, we have $|Z_1|=\mu_1$.

Pick vertex sets $Z_0\subset X_0$ and $Z_2\subset X_2$ so that the vertices in $Z_0\cup Z_1\cup Z_2$ are pairwise at least $k$ apart on $P$, $|Z_0|\leq \mu_0$, and $|Z_2|\leq \mu_2$, and, subject to this, $|Z_0\cup Z_2|$ is maximised.
Suppose, for contradiction, that $|Z_0|<\mu_0$. Then, as every vertex in $V(P)\cap X_0$ is within distance $k-1$ of $Z_0\cup Z_1\cup Z_2$ on $P$, and $\mu_0+\mu_2> \mu_1$, we have
\begin{equation}\label{eqn:lastone}
|V(P)\cap X_0|\leq (2k-1)|Z_0\cup Z_1\cup Z_2|< (2k-1)(\mu_0+\mu_1+\mu_2)\leq \frac{10^3(\mu_0+\mu_2)\log n}{\log\log n}.
\end{equation}
As $\mu_0+\mu_2> \mu_1$ (and $\mu_0+\mu_1+\mu_2\geq n/\log n$), we have $\mu_0+\mu_2\geq 2$. Thus, by \ref{mu0}, $|V(P)\cap X_0|\geq (\mu_0+\mu_2)\log n/(2\cdot 10^3)$, which, for large $n$, contradicts \eqref{eqn:lastone}. Similarly, we get a contradiction if $|Z_2|<\mu_2$. Therefore, we have $|Z_0|=\mu_0$, $|Z_2|=\mu_2$, and thus $Z_0,Z_1,Z_2$ and $P$ satisfy the conditions in the lemma.
\end{proof}
\fi

%%%%%%%%%%%%%%%%%%%%%%%%%%%%%%%%%%%%%%%%%%%%%%%%%%%%%%%%%%%%%%%%%%%%%%%%%%%%%%%%%%%%%%%%%%%%%%%%%%%%%%%%%%%%%%%%%%%%%%%%%%%%%%%%%%%%%%%%%%%%%%
%%%%%%%%%%%%%%%%%%%%%%%%%%%%%%%%%%%%%%%%%%%%%%%%%%%%%%%%%%%%%%%%%%%%%%%%%%%%%%%%%%%%%%%%%%%%%%%%%%%%%%%%%%%%%%%%%%%%%%%%%%%%%%%%%%%%%%%%%%%%%%
%%%%%%%%%%%%%%%%%%%%%%%%%%%%%%%%%%%%%%%%%%%%%%%%%%%%%%%%%%%%%%%%%%%%%%%%%%%%%%%%%%%%%%%%%%%%%%%%%%%%%%%%%%%%%%%%%%%%%%%%%%%%%%%%%%%%%%%%%%%%%%
%%%%%%%%%%%%%%%%%%%%%%%%%%%%%%%%%%%%%%%%%%%%%%%%%%%%%%%%%%%%%%%%%%%%%%%%%%%%%%%%%%%%%%%%%%%%%%%%%%%%%%%%%%%%%%%%%%%%%%%%%%%%%%%%%%%%%%%%%%%%%%
%%%%%%%%%%%%%%%%%%%%%%%%%%%%%%%%%%%%%%%%%%%%%%%%%%%%%%%%%%%%%%%%%%%%%%%%%%%%%%%%%%%%%%%%%%%%%%%%%%%%%%%%%%%%%%%%%%%%%%%%%%%%%%%%%%%%%%%%%%%%%%
%%%%%%%%%%%%%%%%%%%%%%%%%%%%%%%%%%%%%%%%%%%%%%%%%%%%%%%%%%%%%%%%%%%%%%%%%%%%%%%%%%%%%%%%%%%%%%%%%%%%%%%%%%%%%%%%%%%%%%%%%%%%%%%%%%%%%%%%%%%%%%
%%%%%%%%%%%%%%%%%%%%%%%%%%%%%%%%%%%%%%%%%%%%%%%%%%%%%%%%%%%%%%%%%%%%%%%%%%%%%%%%%%%%%%%%%%%%%%%%%%%%%%%%%%%%%%%%%%%%%%%%%%%%%%%%%%%%%%%%%%%%%%

\subsection{Proof of Theorem~\ref{thmrandprocess} from Theorem~\ref{mainthm}}\label{sec:randpf}
We can now prove Theorem~\ref{thmrandprocess}, which we restate for convenience.
\thmrandprocess*
 In the initial set-up of the proof, we follow the explanation at the start of Section~\ref{sec:diprop}, and use the values of $i_0,i_1,i_2$ and $i_3$ in \eqref{iseq}. In the random digraph process $D_0,D_1,\ldots,D_{n(n-1)}$, we condition on the value of $D_{i_0}$, and, for each $i$ with $i_0\leq i\leq i_3$, on the edges in $E(D_i)\setminus E(D_{i_0})$ with at least one vertex in $S$, where $S$ is the set of low in- or out-degree vertices in $D_{i_0}$. For each $i$ with $i_1\leq i\leq i_2$, subject to the conditioning in the previous sentence, we know which cycles we want to embed into $D_i$ (gathered into the set $\mathcal{C}_i$) as $s_i$ and $t_i$ are determined by this conditioning. The main part of the proof consists of showing that, subject to the conditioning, a copy of each such cycle $C$ is very likely to appear in $D_i$ (see Claim~\ref{cl-forunion}), using Theorem~\ref{mainthm}.
 Essentially, the edges revealed in the conditioning allow the bound on the probability of the appearance of a copy of $C\in \mathcal{C}_i$ to be much stronger than the probability a copy of $C$ appears in $D_i$ (see also Section~\ref{sec:guide}). Thus, the bound given by Theorem~\ref{mainthm} is strong enough to take a union bound over all the cycles in $\mathcal{C}_i$ (and $i$ with $i_1\leq i\leq i_2$) to complete the proof.
% Taking a union bound then completes the proof.

As highlighted by paragraph titles, the proof that a relevant cycle $C$ is very likely to appear in $D_i$ subject to the conditioning (that is, the proof of Claim~\ref{cl-forunion}) proceeds in the following steps.

\begin{itemize}
\item We simplify the random edges in $D_i$ which have not been conditioned on, replacing them with a binomial random digraph with vertex set $[n]\setminus S$ (called $\bar{D}$).% (Recalling that $S$ is the set of low degree vertices in $D_{i_0}$.)
\item We identify the low in- and out-degree vertices in $D_i$ (which form a subset of $S$), and partition them according to the degree of the vertex in the cycle which will be embedded to them, before choosing vertices to embed as their neighbours to get paths of length 2.
\item We choose the subpath $P$ of $C$ so that we can embed well-spaced paths of length 2 from $P$ to these paths of length 2 in $D_i$ (using Lemma~\ref{pathincycle}).
\item We modify the conditioned edges of $D_i$ (those in the graph $H_i$) to get a pseudorandom digraph ($H_i'$), by contracting the chosen paths of length 2 in $D_i$ (and possibly altering some edges).
\item We modify $C$ and $P$ accordingly.
\item We modify the binomial random digraph $\bar{D}$ with vertex set $[n]\setminus S$ accordingly.
\item We apply Theorem~\ref{mainthm} to these modified digraphs, before undoing the modifications to find a copy of the cycle $C$ in $D_i\cup \bar{D}$.
\end{itemize}

\ifproofdone

\bigskip

\noindent\textbf{Proof of Theorem~\ref{thmrandprocess} is complete.}
\else
\begin{proof}[Proof of Theorem~\ref{thmrandprocess}] %Let $n$ be very large.
Let $d=\log n/300$ and let $i_0,i_1,i_2$ and $i_3$ be as in \eqref{iseq}. Let $D_0,D_1,\ldots,D_{n(n-1)}$ be the $n$-vertex random digraph process. First, note that, by \emph{\ref{rp7other}} in Lemma~\ref{lem:keyrandprops}, with high probability there is some
$v\in [n]$ with $d_{D_{i_1}}^+(v)=d_{D_{i_1}}^-(v)=0$, and thus, for each $0\leq i\leq i_1$ the property in Theorem~\ref{thmrandprocess} for $D_i$ holds trivially.
Similarly, by \emph{\ref{rp7}} in Lemma~\ref{lem:keyrandprops}, we have, with high probability that $\delta^\pm(D_{i_3})\geq 2$, and hence $s_{i_3}=t_{i_3}=0$, and thus we need to show that, with high probability, $D_{i_3}$ contains a copy of every $n$-vertex oriented cycle. If this holds, then $D_i$ contains a copy of every $n$-vertex oriented cycle for each $i\geq i_3$. Therefore, it is sufficient  show that, with high probability, the property in Theorem~\ref{thmrandprocess} holds for each $i_1\leq i\leq i_3$.

Let
\[
S=\{v\in [n]:d^+_{D_{i_0}}(v)\leq d\;\text{ or }\;d^-_{D_{i_0}}(v)\leq d\}.
\]
For each $i_0\leq i\leq i_3$, let $D'_i$ be the (random) digraph with vertex set $[n]$ and edge set
\[
E(D_{i_0})\cup \{xy\in E(D_{i}):\{x,y\}\cap S\neq\emptyset\},
\]
so that $D'_i\subset D_i$, and $E(D_i)\setminus E(D'_i)\subset ([n]\setminus S)\times ([n]\setminus S)$.
%Let $\mathcal{D}=(D'_{i_0},D'_{i_0+1}\ldots,D'_{i_3})$.

Now, for each $j\in \{0,1,2,3\}$, let ${K}_j=D_{i_j}$, $\bar{K}_j=D'_{i_j}$,
\[
S_j=\{v\in [n]:d^+_{K_{j}}(v)\leq d\;\text{ or }\;d^-_{K_{j}}(v)\leq d\}=\{v\in [n]:d^+_{\bar{K}_{j}}(v)\leq d\;\text{ or }\;d^-_{\bar{K}_{j}}(v)\leq d\},
\]
where we have used that $S_0=S$ and $d^\diamond_{K_j}(v)=d^\diamond_{\bar{K}_j}(v)$ for each $v\in S$, $\diamond\in \{+,-\}$ and $j\in \{0,1,2,3\}$. Furthermore, let
\[
T=\{v\in [n]:d^+_{K_1}(v)\leq d\;\text{ and }\;d^-_{K_1}(v)\leq d\}=\{v\in [n]:d^+_{\bar{K}_1}(v)\leq d\;\text{ and }\;d^-_{\bar{K}_1}(v)\leq d\},
\]
and let $\overline{\emph{\ref{rp1}}}$--$\overline{\emph{\ref{rpnew}}}$ and $\overline{\emph{\ref{rp4}}}$ be the properties \emph{\ref{rp6}}--\emph{\ref{rpnew}} and \emph{\ref{rp4}} with $K_j$ replaced by $\bar{K}_j$ for any relevant $j\in \{0,1,2,3\}$. As $\bar{K}_0= K_0$ and $\bar{K}_3\subset K_3$, each of
\emph{\ref{rp2}}--\emph{\ref{rpnew}} and \emph{\ref{rp4}} implies the corresponding property $\overline{\emph{\ref{rp2}}}$--$\overline{\emph{\ref{rpnew}}}$ and $\overline{\emph{\ref{rp4}}}$. Therefore, by Lemma~\ref{lem:keyrandprops}, we have that, with high probability, $\overline{\emph{\ref{rp1}}}$--$\overline{\emph{\ref{rpnew}}}$ and $\overline{\emph{\ref{rp4}}}$ hold.
Let $\mathcal{D}=(D'_{i_0}, D'_{i_0+1},\ldots,D'_{i_3})$. Let $\mathbf{H}$ be the set of possible values of $\mathcal{D}$ for which $\overline{\emph{\ref{rp1}}}$--$\overline{\emph{\ref{rpnew}}}$ and $\overline{\emph{\ref{rp4}}}$ hold, so that $\P(\mathcal{D}\in \mathbf{H})=1-o(1)$.

Let $\mathcal{C}$ be the set of all oriented cycles whose underlying cycle is the canonical cycle with vertex set $[n]$. For each $i_1\leq i\leq i_3$, let $s_{i}$ be the number of vertices in $D_{i}$ with in-degree or out-degree 0 and let $t_{i}$ be the number of vertices in $D_i$ with in-degree 1 and out-degree 1. Let $\mathcal{C}_{i}$ be the set of cycles in $\mathcal{C}$ with at least $1+(s_{i}-1)\log n$ and at most $n-1-(t_{i}-1)\log n$ changes of direction.
%Note that, even conditioned on the value of $\mathcal{D}$, for each $i_1\leq i\leq i_3$, $s_i$ and $t_i$ are not yet fixed as they depend on the edges of $D_i$ within $S$ and between $S$ and $[n]\setminus S$, which are not all in $D'_i$. Thus, similarly, for such an $i$, $\mathcal{C}_i$ is not fixed by conditioning on the value of $\mathcal{D}$.
We will show the following claim, where we note that, in \eqref{eq:clmeq}, $d^+_{D_i}(v)+d^-_{D_i}(v)\geq 2$ and $C\in \mathcal{C}_i$ are included because, even though whether they hold or not is decided by the conditioning, they affect the probability for $C\not\subsetsim D_i$. 

%\stepcounter{claim}
\begin{claim}\label{cl-forunion} There is some $n_0$ such that, if $n\geq n_0$, then, for each $\mathcal{H}\in \mathbf{H}$, $i_1\leq i\leq i_3$, and $C\in \mathcal{C}$,
\begin{equation}
\label{eq:clmeq}
\P(d^+_{D_i}(v)+d^-_{D_i}(v)\geq 2\text{ for each }v\in [n]\text{, and }C\in \mathcal{C}_{i}\text{, and }C\not\subsetsim D_i|\mathcal{D}=\mathcal{H})\leq 2e^{-n}.
\end{equation}
\end{claim}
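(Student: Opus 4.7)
The plan is to reduce Claim~\ref{cl-forunion} to Theorem~\ref{mainthm} by contracting the troublesome low-degree vertices of $D_i$ into single ``exceptional'' vertices and matching them, via Lemma~\ref{pathincycle}, to well-spaced vertices on the cycle $C$.

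First, I would extract a fresh piece of randomness. Conditioned on $\mathcal{D}=\mathcal{H}$, the edges of $D_i\setminus D'_i$ are a uniformly random set of $i-e(D'_i)$ directed edges chosen from the non-edges of $D'_i$ that lie inside $[n]\setminus S$; by $\overline{\emph{\ref{rp6}}}$ and $i\geq i_1$, this number is $\Omega(n\log n)$. A standard coupling (reveal the edges one by one in a uniformly random order) then yields an independent binomial random digraph $\bar D$ on $[n]\setminus S$ with edge probability $q\geq \log n/(10^3n)$ such that $\bar D\subseteq D_i\setminus D'_i$, so it suffices to upper bound by $2e^{-n}$ the probability that the total-degree hypothesis holds while $C\not\subsetsim D'_i\cup\bar D$.

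Second, I would choose neighbours of low-degree vertices and match them with cycle vertices. Let $L=\{v\in[n]:d^+_{D'_i}(v)\leq 1\text{ or }d^-_{D'_i}(v)\leq 1\}$; the degree hypothesis forces $d^+_{D'_i}(v)+d^-_{D'_i}(v)\geq 2$ for each $v\in L$, and $L\subseteq S_1$. Using $\overline{\emph{\ref{rp2}}}$, $\overline{\emph{\ref{rp2other}}}$, $\overline{\emph{\ref{rp5a}}}$, $\overline{\emph{\ref{rp5b}}}$ and $\overline{\emph{\ref{rp3}}}$, for each $v\in L$ pick an in-neighbour $a_v^-$ and an out-neighbour $a_v^+$ of $v$ in $D'_i$ so that the 2-paths $a_v^-va_v^+$ are pairwise vertex-disjoint and each $a_v^{\pm}\in[n]\setminus S_1$; these short-path/short-cycle properties are precisely what makes the disjoint choice feasible. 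Partition $L=L_0\cup L_1\cup L_2$ by the pair $(d^+_{D'_i}(v),d^-_{D'_i}(v))$ (value $0$, value $1$ with the other also $1$, and the mirror case), with sizes $\mu_0,\mu_1,\mu_2$. Apply Lemma~\ref{pathincycle} to $C$; since $C\in\mathcal{C}_i$, its number of changes of direction lies in the required window, producing a subpath $P\subseteq C$ of length at most $n/100$ and pairwise $100\log n/\log\log n$-separated sets $Z_0,Z_1,Z_2\subseteq V(P)$ of sizes $\mu_0,\mu_1,\mu_2$, where $Z_j$ consists of vertices of out-degree $j$ in $C$. Fix a bijection $v\mapsto z_v$ from $L$ onto $Z_0\cup Z_1\cup Z_2$ that respects the partition.

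Third, I would contract and apply Theorem~\ref{mainthm}. Form $H_i'$ from $D'_i$ by contracting each path $a_v^-va_v^+$ to a single vertex $\tilde v$, inheriting the union of the external in- and out-neighbourhoods (with minor local edge adjustments if necessary), and set $X=\{\tilde v:v\in L\}$; then $|X|=|L|\leq|S_1|\leq n^{2/3}\leq n^{3/4}$. Pseudorandomness of $H_i'$ with exceptional set $X$ follows from $\overline{\emph{\ref{rpnew}}}$ for~\ref{pseudo0}, from the fact that every $w\in V(H_i')\setminus X\subseteq[n]\setminus S$ already has at least $d$ in- and out-neighbours in $D_{i_0}$ lying outside $S\supseteq L$ for~\ref{pseudo1}, and from $\overline{\emph{\ref{rp8}}}$ combined with $\overline{\emph{\ref{rp3}}}$ to absorb the degree perturbation of at most $2$ at each non-exceptional vertex for~\ref{pseudo2}. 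Contract $C$ analogously, replacing each 2-path of $C$ centred at $z_v$ by $\tilde v$, to obtain an oriented cycle $C'$ on $V(H_i')$ and a subpath $P'\subseteq C'$ of length at most $n/10$; the injection $f:X\to V(P')$, $f(\tilde v)=\tilde v$, has image pairwise $\geq 20\log n/\log\log n$-separated by the spacing from Lemma~\ref{pathincycle}. Finally, $\bar D|_{V(H_i')}$ stochastically dominates $D(|H_i'|,\log|H_i'|/(10^3|H_i'|))$, so Theorem~\ref{mainthm} produces, with probability at least $1-2\exp(-2|H_i'|)\geq 1-2e^{-n}$, a copy of $C'$ in $H_i'\cup\bar D$ with $\tilde v\mapsto\tilde v$ for each $\tilde v\in X$. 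Uncontracting each $\tilde v$ back along $a_v^-va_v^+\subseteq D'_i\subseteq D_i$ yields the desired copy of $C$ in $D_i$. The main obstacle is verifying \ref{pseudo2} for $H_i'$ after contraction: identifying triples of vertices could in principle collapse neighbourhoods and destroy the $(\log n)^{1/3}$-expansion, and it is precisely the control of short paths and short cycles near $S_1$ given by $\overline{\emph{\ref{rp2}}}$--$\overline{\emph{\ref{rp3}}}$ that ensures the perturbation is negligible.
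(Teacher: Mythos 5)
Your proposal has the right architecture — contract the problematic vertices into singletons, match them to well-spaced cycle vertices via Lemma~\ref{pathincycle}, and invoke Theorem~\ref{mainthm} on the contracted graph — but the set of vertices you contract is too small, and this breaks the pseudorandomness verification in a way that cannot be repaired locally.

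You define $L=\{v:d^+_{D'_i}(v)\leq 1\text{ or }d^-_{D'_i}(v)\leq 1\}$, contract only the vertices in $L$, and then assert that every non-exceptional vertex of $H_i'$ lies in $[n]\setminus S$ (so it already has $\geq d$ in- and out-neighbours). That inclusion is false. There are many vertices $w$ with, say, $2\leq d^-_{H_i}(w)\leq d=\log n/300$; such $w$ are in $S_1$ but not in your $L$, so they remain as ordinary (non-exceptional) vertices of $H_i'$, and after contraction their in-degree into $V(H_i')\setminus X$ is still at most $d<\log n/500$. Hence \ref{pseudo1} fails and $H_i'$ is simply not pseudorandom. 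The paper avoids this by contracting the whole low-degree set $Y=\{v:d^+_{H_i}(v)\leq d\text{ or }d^-_{H_i}(v)\leq d\}$, not merely the degree-$\leq 1$ vertices.

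Once you contract all of $Y$, a second issue you have not addressed becomes genuinely substantive: a vertex with, say, $d^+\in\{2,\ldots,d\}$ and $d^-$ large is not forced into $L_0$ or $L_2$ by its degree pattern, so the partition $\bar Y_0\cup\bar Y_1\cup\bar Y_2$ is no longer determined by the degrees and must be \emph{chosen} so that $|\bar Y_0|+|\bar Y_2|\leq\lceil 2\lambda/\log n\rceil$ and $|\bar Y_1|\leq\lceil(n-2\lambda)/\log n\rceil$ for the given cycle $C\in\mathcal{C}_i$. This is exactly why the paper splits into Case~I ($\lambda\geq n/4$) and Case~II ($\lambda<n/4$), and in Case~II introduces the auxiliary set $B$ (controlled via $\overline{\emph{\ref{rp2}}}$, $\overline{\emph{\ref{rp2other}}}$, $\overline{\emph{\ref{rp5a}}}$, $\overline{\emph{\ref{rp5b}}}$ and $\overline{\emph{\ref{rp4}}}$) of vertices that cannot safely be assigned out-degree~$1$. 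Finally, a smaller slip: for $v$ with out-degree $0$ you cannot pick an out-neighbour $a_v^+$, and for in-degree $0$ no $a_v^-$; the two chosen neighbours must both be in-neighbours (for out-degree-$0$ vertices) or both out-neighbours (for in-degree-$0$ vertices), as the paper does. Your phrase ``inheriting the union of the external in- and out-neighbourhoods (with minor local edge adjustments if necessary)'' also glosses over the direction-dependent edge rules \ref{edge1}--\ref{edge4} that the paper needs so that the uncontraction yields a genuine copy of $C$ rather than just a copy of the underlying undirected structure.
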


This claim is sufficient to prove the theorem. Indeed, let $E$ be the event that, for some $i_1\leq i\leq i_3$ and $C\in \mathcal{C}$, we have $d^+_{H_i}(v)+d^-_{H_i}(v)\geq 2$ for each $v\in [n]$ and $C\in \mathcal{C}_{i}$ and $C\not\subsetsim D_i$. Then, if $n\geq n_0$, by a simple union bound and Claim~\ref{cl-forunion}, we have, for each $\mathcal{H}\in \mathbf{H}$, that $\P(E|\mathcal{D}=\mathcal{H})\leq (i_3-i_1+1)\cdot 2^n\cdot 2e^{-n}\leq n^2\cdot 2^n\cdot 2e^{-n}$ if $n\geq n_0$, and hence
\begin{align*}
\P(E) &\leq \P(\mathcal{D}\notin \mathbf{H})+\sum_{\mathcal{H}\in\mathbf{H}}\P(\mathcal{D}=\mathcal{H})\cdot \P(E|\mathcal{D}=\mathcal{H})\\
&\leq \P(\mathcal{D}\notin \mathbf{H})+n^2\cdot 2^n\cdot 2e^{-n}\cdot \sum_{\mathcal{H}\in\mathbf{H}}\P(\mathcal{D}=\mathcal{H})\leq \P(\mathcal{D}\notin \mathbf{H})+n^2\cdot 2^n\cdot e^{-n}=\P(\mathcal{D}\notin \mathbf{H})+o(1).%\sum_{\mathcal{H}\in\mathbf{H}}\P(\mathcal{D}=\mathcal{H})\mathcal{C}}E_{C,i}|\mathcal{D}\in \mathbf{H})
\end{align*}
Thus, as $\P(\mathcal{D}\notin \mathbf{H})=o(1)$, we have that, with high probability, $E$ does not hold, and therefore the property in the theorem holds. It is left then to prove Claim~\ref{cl-forunion}.

%\medskip
%\noindent\emph{Proof of Claim~\ref{cl-forunion}.}
\begin{proof}[Proof of Claim~\ref{cl-forunion}] Let $\mathcal{H}=(H_{i_0},\ldots,H_{i_3})\in \mathbf{H}$, $i_1\leq i\leq i_3$ and $C\in \mathcal{C}$. Let $s$ be the number of in-degree 0 or out-degree 0 vertices in $H_i$ and let $t$ be the number of vertices in $H_i$ with in- and out-degree 1.
%Note that, if $\mathcal{D}=\mathcal{H}$, then $s=s_i$ and $t=t_i$.
For each $v\in [n]$, if $\mathcal{D}=\mathcal{H}$ and either $d^+_{H_i}(v)\leq d$ or $d^-_{H_i}(v)\leq d$, we have, as $D_{i_0}=D'_{i_0}=H_{i_0}$, that $v\in S_0$ and hence
$d^\diamond_{D_i}(v)=d^\diamond_{H_i}(v)$ for each $\diamond\in \{+,-\}$.
Therefore, if $\mathcal{D}=\mathcal{H}$, then $s_i=s$ and $t_i=t$.  Thus, given $\mathcal{D}=\mathcal{H}$, whether or not $C\in \mathcal{C}_{i}$ depends only on $\mathcal{H}$ and $i$, so we can assume that $C\in \mathcal{C}_{i}$, as otherwise \eqref{eq:clmeq} holds trivially. Similarly, if $d^+_{H_i}(v)+d^-_{H_i}(v)\leq 1$ for some $v\in [n]$, then  $d^+_{D_i}(v)+d^-_{D_i}(v)\leq 1$ and \eqref{eq:clmeq} again trivially holds. Thus, we can assume that $d^+_{H_i}(v)+d^-_{H_i}(v)\geq 2$ for each $v\in [n]$.

Given $\mathcal{D}=\mathcal{H}$, we have that $S=S_0$ is the fixed set $\{v\in [n]:d^+_{H_{i_0}}(v)\leq d\text{ or }d^-_{H_{i_0}}(v)\leq d\}$, which depends only on $\mathcal{H}$. Therefore, conditioned on $\mathcal{D}=\mathcal{H}$, the distribution of the random digraph $D_i$ is that of the deterministic graph $H_i$
with $i-e(H_i)$ edges uniformly at random added from $\{uv\notin E(H_i):u,v\in [n]\setminus S,u\neq v\}$. We will first replace these random edges with an appropriate binomial random digraph.

\myphead{Simplify the random edges within $[n]\setminus S$.}
%Conditioned on $\mathcal{D}=\mathcal{H}$, $D_i$ is a random digraph formed by adding $i-e(H_i)$ edges from $([n]\setminus S\times [n]\setminus S)\setminus E(H_{i})$ uniformly at random to $H_i$.
Let $p=\log n/100n$, and let $\bar{D}$ be a binomial random digraph with edge probability $p$ and vertex set $[n]\setminus S$. Let $E'$ be the event that $e(\bar{D})\leq i-e(H_i)$. If $E'$ holds, then let $\widehat{D}$ be $\bar{D}$ with $i-e(H_i)-|E(\bar{D})\setminus E(H_i)|$ edges uniformly at random added
from $\{uv\notin E(H_i)\cup E(\bar{D}):u,v\in [n]\setminus S,u\neq v\}$.
If $E'$ does not hold, then let $\widehat{D}$ be the random digraph with vertex set $[n]\setminus S$ with $i-e(H_i)$ edges uniformly at random added from $\{uv\notin E(H_i):u,v\in [n]\setminus S,u\neq v\}$. Note that, $H_i\cup \widehat{D}$ has the same distribution as $D_i$ conditioned on $\mathcal{D}=\mathcal{H}$. Furthermore, if $E'$ holds, then $H_i\cup \bar{D}\subset H_i\cup \widehat{D}$.

As $\mathcal{H}\in \mathbf{H}$, we have that $\overline{\emph{\ref{rp6}}}$ holds whenever $\mathcal{D}=\mathcal{H}$, and therefore $i-e(H_i)\geq i_1-e(H_{i_1})\geq i_1-i_0-n\geq n\log n/40$, for sufficiently large $n$. Thus, as $\E(e(\bar{D}))\leq p(n-|S|)^2\leq pn^2$, by Lemma~\ref{chernoff}, we have $\P(E'\text{ does not hold})=\exp(-\Omega(n\log n))\leq \exp(-n)$, for sufficiently large $n$.

Thus, to complete the proof of the claim, it is sufficient to show that, for sufficiently large $n$,
$\P(C\not\subsetsim H_i\cup \bar{D})\leq \exp(-n)$,
for it follows that
\[
\P(C\not\subsetsim D_i|\mathcal{D}=\mathcal{H})=\P(C\not\subsetsim H_i\cup \widehat{D})\leq \P(E'\text{ does not hold})+\P(C\not\subsetsim H_i\cup \bar{D})\leq 2\cdot  \exp(-n).
\]
We now focus on the vertices in $H_i$ with low in- or out-degree.

\myphead{Identify low in- and out-degree vertices.}
Recalling that $d=\log n/300$, let
\[
Y=\{v\in [n]:d^+_{H_{i}}(v)\leq d\;\text{ or }\;d^-_{H_{i}}(v)\leq d\},
\]
so that $Y\subset S_1$.
For each $j\in \{0,1,2\}$, let $Y_j\subset Y$ be the vertices in $Y$ which, based on their out- and in-degree, could be a vertex of the copy of $C$ in $H_i$ with out-degree $j$. That is, let
\[
Y_0=\{v\in Y:d^-_{H_i}(v)\geq 2\},\;\; Y_1=\{v\in Y:d^\pm_{H_i}(v)\geq 1\},\;\; \text{ and }\;\; Y_2=\{v\in Y:d^+_{H_i}(v)\geq 2\}.
\]
As $d^-_{H_i}(v)+d^+_{H_i}(v)\geq 2$ for each $v\in [n]$, we have $Y=Y_0\cup Y_1\cup Y_2$. For each $\diamond\in \{+,-\}$, we will let $Y^\diamond\subset Y$ be the set of vertices with plenty of $\diamond$-neighbours in $H_i$, so that it is the set of vertices which are easy to place in the copy of $C$ as a vertex with two $\diamond$-neighbours. That is, we let, for each $\diamond\in \{+,-\}$,
\[
Y^\diamond=\{v\in Y:d^\diamond_{H_i}(v)> d\},
\]
so that, as $Y\subset S_1$, $Y^+$ and $Y^-$ are disjoint. Recall that $T$ is the set of vertices in $H_{i_1}\subset H_i$ with in- and out-degree both at most $d$, so that $Y=(Y\cap T)\cup Y^+\cup Y^-$.

Our aim now is to partition $Y$ into sets $\bar{Y}_j$, $j\in \{0,1,2\}$,  so that $|\bar{Y}_j|$, $j\in \{0,1,2\}$, satisfy certain inequalities for a later application of Lemma~\ref{pathincycle}. We will find them with vertices $x_v,y_v\in [n]\setminus Y$, for each $v\in Y$, where $x_v$ and $y_v$ can be the neighbours of $v\in \bar{Y}_j$ on any cycle in which $v$ has out degree $j$.
We do this differently according to the number of changes of direction on $C$. Let $\lambda$ then be the number of vertices of $C$ with out-degree 0.  As $C\in \mathcal{C}_{i}$, we have $1+(s_i-1)\log n\leq 2\lambda \leq n-1-(t_i-1)\log n$. Our cases are when $\lambda\geq n/4$ and when $\lambda<n/4$.

\myphead{Case I: when $\lambda\geq n/4$.} There are many changes of direction on $C$, so we will use vertices with out-degree 0 or 2 in $C$ to cover as many vertices as possible. Thus, let $\bar{Y}_0=(Y\cap Y^-)\cup (Y_0\cap T)$, $\bar{Y}_2=(Y\cap Y^+)\cup ((Y_2\cap T)\setminus \bar{Y}_0)$, and $\bar{Y}_1=Y\setminus (\bar{Y}_0\cup \bar{Y}_2)$. Note that each vertex in $\bar{Y}_1$ has in- and out-degree exactly 1 in $H_i$. Thus, $|\bar{Y}_1|= t_i$ and $\bar{Y}_1\subset T$.
%For each $i\in \{0,1,2\}$, let $\mu_i=|\bar{Y}_i|$.
As $2\lambda\leq n-1-(t_i-1)\log n$,  we have $n-2\lambda\geq 1+(t_i-1)\log n$, so that $\lceil (n-2\lambda)/\log n\rceil\geq t_i= |\bar{Y}_1|$. As $\overline{\emph{\ref{rp1}}}$ holds if $\mathcal{D}=\mathcal{H}$,
we have that $|S_0|\leq n^{2/3}$,
and thus
$|\bar{Y}_0|+|\bar{Y}_2|\leq |Y|\leq |S_0|\leq 2\lambda/\log n$.

Now, for each $v\in \bar{Y}_0 \cap T\subset Y_0$, pick distinct $x_v,y_v\in N^-_{H_i}(v)$. For each $v\in \bar{Y}_1\subset T$, pick $x_v\in N^+_{H_i}(v)$ and $y_v\in N^-_{H_i}(v)$. For each $v\in \bar{Y}_2\cap T\subset Y_2$, pick distinct $x_v,y_v\in N^+_{H_i}(v)$.
As $\mathcal{H}\in \mathbf{H}$, and $\overline{\emph{\ref{rp5a}}}$  holds whenever $\mathcal{D}=\mathcal{H}$, we have that $H_i[\{v,x_v,y_v:v\in Y\cap T\}]$ is a forest with $|Y\cap T|$ components and no vertices in $S_1\setminus (Y\cap T)$, and hence the vertices  $x_y,y_v$, $v\in Y\cap T$, are distinct and in $[n]\setminus S_1\subset [n]\setminus Y$.

As $\mathcal{H}\in \mathbf{H}$, and $\overline{\emph{\ref{rp3}}}$ holds whenever $\mathcal{D}=\mathcal{H}$, and as $H_i\subset K_3$ and $Y\subset S_1$, we have that each $v\in Y$ has at most 2 in- or out-neighbours in $H_i$
in $Y\cup N_{H_i}^+(Y-v)\cup N_{H_i}^-(Y-v)$. Therefore, for each $v\in \bar{Y}_0\setminus T\subset Y^-$, as $|N^-_{H_i}(v)|\geq d\geq 4$, we can do the following.
\stepcounter{propcounter}
\begin{enumerate}[label = \textbf{\Alph{propcounter}\arabic{enumi}}]
\item Pick distinct $x_v,y_v\in N^-_{H_i}(v)\setminus (Y\cup N^+(Y-v)\cup N^-(Y-v))$. \label{goodcase1}
\end{enumerate}
Similarly, for each $v\in \bar{Y}_2\setminus T\subset Y^+$, we can do the following.

\begin{enumerate}[label = \textbf{\Alph{propcounter}\arabic{enumi}}]\stepcounter{enumi}
\item Pick distinct $x_v,y_v\in N^+_{H_i}(v)\setminus (Y\cup N^+(Y-v)\cup N^-(Y-v))$. \label{goodcase2}
\end{enumerate}
Note that the vertices $x_v,y_v$, $v\in Y$, are distinct and in $[n]\setminus Y$.

To recap, we have found a partition $Y=\bar{Y}_0\cup \bar{Y}_1\cup \bar{Y}_2$, and distinct vertices $x_v,y_v$, $v\in Y$, in $[n]\setminus Y$, such that the following hold.
\stepcounter{propcounter}
\begin{enumerate}[label = \textbf{\Alph{propcounter}\arabic{enumi}}]
\item If $v\in \bar{Y}_0$, then $x_v,y_v\in N^-_{H_i}(v)$.
 If $v\in \bar{Y}_1$, then $x_v\in N^+_{H_i}(v)$ and $y_v\in N^-_{H_i}(v)$.
If $v\in \bar{Y}_2$, then $x_v,y_v\in N^+_{H_i}(v)$.\label{help1}
\item $|\bar{Y}_0|+|\bar{Y}_2|\leq \lceil 2\lambda/\log n\rceil$ and $|\bar{Y}_1|\leq \lceil (n-2\lambda)/\log n\rceil$.\label{help2}
\end{enumerate}

\myphead{Case II: when $\lambda<n/4$.} Supposing then that $\lambda<n/4$, we now find a partition $Y=\bar{Y}_0\cup \bar{Y}_1\cup \bar{Y}_2$, and distinct vertices $x_v,y_v$, $v\in Y$, in $[n]\setminus Y$, which also satisfy \ref{help1} and \ref{help2}.
This is more complicated than in Case I, as to achieve \ref{help2} we may have to assign vertices in $Y\setminus T$ not to $\bar{Y}_0\cup \bar{Y}_2$ but to $\bar{Y}_1$. Thus, all the vertices $x_v,y_v$, $v\in Y\setminus T$, are not selected in either \ref{goodcase1} or \ref{goodcase2}. To cope with this, we gather into a set $B$ the vertices $v\in Y\setminus T$ for which $x_v,y_v$ would be particularly hard to find were $v$ assigned to $\bar{Y}_1$. We then show there are enough changes of direction in $C$ to assign the vertices in $B$ instead to $\bar{Y}_0$ or $\bar{Y}_2$.

As $\overline{\emph{\ref{rp5a}}}$  holds whenever $\mathcal{D}=\mathcal{H}$, we can take $B\subset Y\setminus T$ to be a minimal set of vertices for which we have that $H_i[(Y\setminus B)\cup N_{H_i}^+(Y\setminus B)\cup N_{H_i}^-(Y\setminus B)]$ is a forest with $|Y\setminus B|$ components and no vertices in $B$. For each $\diamond\in \{+,-\}$, let $B^\diamond=\{v\in B:d^\diamond_{H_i}(v)> d\}$. Note that, as $B\subset Y\subset S_1$, $B^+$ and $B^-$ are disjoint and, as $B\cap T=\emptyset$, $B=B^+\cup B^-$.
Let $\bar{Y}_0=\{y\in Y:d^+_{H_i}(y)=0\}\cup B^-$. Let $\bar{Y}_2=\{y\in Y:d^-_{H_i}(y)=0\}\cup B^+$. Let $\bar{Y}_1=Y\setminus (\bar{Y}_1\cup \bar{Y}_2)\subset Y\setminus B$. As $\overline{\emph{\ref{rp1}}}$ holds if $\mathcal{D}=\mathcal{H}$,
we have that $|S_0|\leq n^{2/3}$, so that, as $\lambda<n/4$, we have $|\bar{Y}_1|\leq |Y|\leq |S_0|\leq n^{2/3}\leq (n-2\lambda)/\log n$.

If $i\geq i_2$, then $Y\subset S_2$ and, as $\overline{\emph{\ref{rp5b}}}$ holds whenever $\mathcal{D}=\mathcal{H}$, and $H_{i_2}\subset H_i$, we have that $B=\emptyset$. Hence, $|\bar{Y}_0|+|\bar{Y}_2|= s_i$, so that, as, $2\lambda\geq 1+(s_i-1)\log n$, we have $|\bar{Y}_0|+|\bar{Y}_2|= s_i\leq \lceil 2\lambda /\log n\rceil$.

If $i\leq i_2$, then, as $\overline{\emph{\ref{rp4}}}$ holds whenever $\mathcal{D}=\mathcal{H}$, and $H_i\subset H_{i_2}$, we have that
 $s_i\geq n^{1/5}$. As $\overline{\emph{\ref{rp2}}}$ and $\overline{\emph{\ref{rp2other}}}$ hold whenever $\mathcal{D}=\mathcal{H}$, we have that $|B|\leq 5n^{1/6}\leq s_i-2$. Thus, as $\lambda\geq 1+(s_i-1)\log n$, we have $|\bar{Y}_0|+|\bar{Y}_2|\leq s_i+|B|\leq 2s_i-2\leq 2\lambda /\log n$.

 Therefore, \ref{help2} holds whether $i\geq i_2$ or $i\leq i_2$.
Now, for each $v\in \bar{Y}_0\setminus B^-\subset Y_0$, pick distinct $x_v,y_v\in N^-_{H_i}(v)$. For each
 $v\in \bar{Y}_1\subset Y_1$, pick $x_v\in N^+_{H_i}(v)$ and $y_v\in N^-_{H_i}(v)$.
For each $v\in \bar{Y}_2\setminus B^+\subset Y_2$, pick distinct $x_v,y_v\in N^+_{H_i}(v)$. As $H_i[\{v,x_v,y_v:v\in Y\setminus B\}]\subset H_i[(Y\setminus B)\cup N_{H_i}^+(Y\setminus B)\cup N_{H_i}^-(Y\setminus B)]$ is a forest with $|Y\setminus B|$ components with no vertices in $B$, the vertices  $x_y,y_v$, $v\in Y\setminus B$, are distinct, and in $[n]\setminus Y$.

Similarly to in Case I, we have that each $v\in Y$ has at most 2 in- or out-neighbours in $Y\cup N_{H_i}^+(Y-v)\cup N_{H_i}^-(Y-v)$.
 Therefore, for each $v\in B^-$, as $|N^-_{H_i}(v)|\geq d\geq 4$, we can pick distinct $x_v,y_v\in N^-_{H_i}(v)\setminus (Y\cup N_{H_i}^+(Y-v)\cup N_{H_i}^-(Y-v))$.
Similarly, for each $v\in B^+$, we can pick distinct $x_v,y_v\in N^+_{H_i}(v)\setminus (Y\cup N_{H_i}^+(Y-v)\cup N_{H_i}^-(Y-v))$.
Note that all the vertices $x_v,y_v$, $v\in Y$, are distinct and in $[n]\setminus Y$, and therefore \ref{help1} holds.

\medskip

We have now chosen, in both Case I and Case II, a partition $Y=\bar{Y}_0\cup \bar{Y}_1\cup \bar{Y}_2$, and distinct vertices $x_v,y_v$, $v\in Y$, in $[n]\setminus Y$, satisfying \ref{help1} and \ref{help2}.
We now choose the section of the cycle, and vertex sets in it, that we use to cover the low in- and out-degree vertices (those in $Y=\bar{Y}_0\cup \bar{Y}_1\cup \bar{Y}_2$).

\myphead{Choosing $P$ and $Z_0,Z_1,Z_2$.} Now, by Lemma~\ref{pathincycle} and \ref{help2}, there exists a path $P\subset C$ with length at most $n/100$ and vertex sets $Z_0,Z_1,Z_2\subset V(P)$ such that the following hold with $Z=Z_0\cup Z_1\cup Z_2$.
\stepcounter{propcounter}
\begin{enumerate}[label = \textbf{\Alph{propcounter}\arabic{enumi}}]
\item For each $j\in \{0,1,2\}$, $|Z_j|=|\bar{Y}_j|$, and the vertices in $Z_j$ each have out-degree $j$ in $C$.\label{RR}
\item The vertices in $Z$ are pairwise at least $100\log n/\log\log n$ apart on $P$ from each other.\label{basedonatruestory}
\end{enumerate}

Using \ref{RR}, label the vertices of $Z$ as $a_v$, $v\in Y$, so that, for each $j\in \{0,1,2\}$, if $v\in \bar{Y}_j$, then $a_v\in Z_j$. Pick an arbitrary direction on $C$ to be clockwise and, for each $v\in Y$, label vertices of $C$ so that $d_vb_va_vc_ve_v$ is a path on $C$ with vertices in clockwise order. Using \ref{help1} and \ref{RR}, for each $v\in Y$, by swapping the labels of $x_v$ and $y_v$ if necessary, we can assume that $x_vvy_v$ is a copy of $b_va_vc_v$.

We now modify $H_i$, $C$ (along with $P$) and $\bar{D}$, to allow us to apply Theorem~\ref{mainthm}.

\myphead{Modify $H_i$ to get a pseudorandom graph, $H_i'$.}
For each $v\in Y$, we wish to delete $v,x_v$ and $y_v$ from $H_i$ and replace them with a single new vertex $z_v$. Later we will find a cycle $C_0$ containing a subpath through $z_v$, say with vertices $w,z_v,w'$, and then replace $wz_vw'$ with $wx_vvy_vw'$ to get a copy of the subpath $d_vb_va_vc_ve_v$ of $P$. To do this, we use the in-edges of $z_v$ to guarantee the appropriate edge between $w$ and $x_v$ (matching the edge between $d_v$ and $b_v$) and the out-edges of $z_v$ to guarantee the appropriate edge between $y_v$ and $w'$ (matching the edge between $c_v$ and $e_v$), and insist later that the subpath on $w,z_v,w'$ in $C_0$ is directed from $w$ to $w'$.

More precisely, define $H_i'$ as follows. Let $Y':=\{v,x_v,y_v:v\in Y\}$ and let $H_i'$ be the graph formed by deleting $Y'$
from $H_i$ and adding the new vertices $z_v$, $v\in Y$, along with the following additional edges for each $v\in Y$ and $w\in [n]\setminus Y'$.

\bigskip

\begin{minipage}{0.475\textwidth}
\stepcounter{propcounter}
\begin{enumerate}[label = \textbf{\Alph{propcounter}\arabic{enumi}}]
  \item ${wz_v}$ if $d_vb_v\in E(C)$ and $wx_v\in E(H_i)$\label{edge1}
  \item ${wz_v}$ if $b_vd_v\in E(C)$ and $x_vw\in E(H_i)$\label{edge2}
  \end{enumerate}
  \end{minipage}
  \begin{minipage}{0.475\textwidth}
\begin{enumerate}[label = \textbf{\Alph{propcounter}\arabic{enumi}}]\addtocounter{enumi}{2}
      \item ${z_vw}$ if $c_ve_v\in E(C)$ and $y_vw\in E(H_i)$\label{edge3}
            \item ${z_vw}$ if $e_vc_v\in E(C)$ and $wy_v\in E(H_i)$\label{edge4}
\end{enumerate}
\end{minipage}

\bigskip

Let $X=\{z_v:v\in Y\}$. Note that $|X|= |Y|\leq n^{2/3}$. Let $\bar{n}=|H_i'|=n-2|X|\geq n-2n^{2/3}$, so that, for large $n$, $|X|\leq \bar{n}^{3/4}$. We will show that $H_i'$ is pseudorandom with exceptional set $X$. As $\overline{\emph{\ref{rpnew}}}$ holds whenever $\mathcal{D}=\mathcal{H}$,
and $H_i\subset K_{i_3}$, we have that $\Delta(H_i')\leq \Delta(H)\leq 50n\log n\leq 100\bar{n}\log \bar{n}$, and therefore \ref{pseudo0} holds for~$D=H_i'$.

As $\overline{\emph{\ref{rp3}}}$  holds whenever $\mathcal{D}=\mathcal{H}$, we have that any vertex in $[n]$ has at most 2 in- or out-neighbours in $H_i$ in $S_1\cup N_{H_i}^+(S_1)\cup N_{H_i}^-(S_1)$, and hence at most 2 in- or out-neighbours in $H_i$ in $Y'$. Thus, for each $v\in [n]\setminus Y'\subset [n]\setminus Y$ and $\diamond\in \{+,-\}$, we have
\begin{equation*}%\label{passdown}
d_{H_i'}^\diamond(v,V(H_i')\setminus X)= d_{H_i}^\diamond(v,[n]\setminus Y')\geq d-2\geq \log n/500\geq \log \bar{n}/500.
\end{equation*}
For each $v\in Y$, we have that if $d_vb_v\in E(C)$, then $d_{H_i}^-(x_v,[n]\setminus Y')$ in-edges to $z_v$ are added at \ref{edge1}, and if $b_vd_v\in E(C)$, then $d_{H_i}^+(x_v,[n]\setminus Y')$ in-edges to $z_v$ are added at \ref{edge2}.
Therefore, as $x_v\notin Y$,
\[
d_{H_i'}^-(z_v,V(H_i')\setminus X)\geq \min\{d_{H_i}^-(x_v,[n]\setminus Y'),d_{H_i}^+(x_v,[n]\setminus Y')\} \geq d-2\geq \log n/500\geq \log \bar{n}/500.
\]
Similarly, enough out-edges from $z_v$
are added at either \ref{edge3} or \ref{edge4} that $d_{H_i'}^+(z_v,V(H_i')\setminus X)\geq \log \bar{n}/500$. Therefore, \ref{pseudo1} holds for $D=H_i'$ and $X$.

We now prove that \ref{pseudo2} holds for $D=H_i'$. Suppose, for contradiction, that there are sets $A',B'\subset V(H_i')$ and some $\diamond\in \{+,-\}$ with $d^\diamond_{H_i'}(v,B')\geq (\log \bar{n})^{2/3}$ for each $v\in A'$, $|B'|\leq |A'|(\log \bar{n})^{1/3}$ and $|A'|\leq \bar{n}\log\log \bar{n}/\log \bar{n}$.
Now, every vertex in $v\in [n]\setminus Y'$ has at most 2 in- or out-neighbours in $H_i$ in $Y'$, and hence at most 2 in- or out-neighbours in $H'_i$ in $X$. Therefore, every vertex in $A'\setminus X$ has at least $(\log \bar{n})^{2/3}-2\geq (\log n)^{2/3}/2$ $\diamond$-neighbours in $B'\setminus X$. Therefore, as $\overline{\emph{\ref{rp8}}}$ holds whenever $\mathcal{D}=\mathcal{H}$, and any edges in $H_i'$ between $A'\setminus X$ and $B'\setminus X$ lie in $H_i\subset K_3$, we have $|A'\setminus X|\leq |B'|/100(\log n)^{1/3}\leq |A'|/2$. Thus, $|A'\cap X|\geq |A'|/2$.

Now, take $A''\subset A'\cap X$ with $|A''|\geq |A'|/4$ so that all the $\diamond$-edges of $z_v\in A''$ were added under the same step \ref{edge1}--\ref{edge4} (noting that only two steps are used for each possible value of $\diamond$).  If these edges were all added under \ref{edge1}, then let $A_0=\{x_v:z_v\in A''\}$, and observe that every vertex in $A_0$ has at least $(\log \bar{n})^{2/3}-2\geq (\log n)^{2/3}/2$ in-neighbours in $H_i$ in $B'\setminus X$. Therefore, again as $\overline{\emph{\ref{rp8}}}$ holds whenever $\mathcal{D}=\mathcal{H}$, we have $|B'|\geq |B'\setminus X|\geq 100|A'\cap X|(\log n)^{1/3}> |A'|(\log n)^{1/3}$, a contradiction. A similar contradiction is reached if all these edges are added at \ref{edge2}, at \ref{edge3}, or at \ref{edge4}.
Thus, no such sets $A'$ and $B'$ exist, so that \ref{pseudo2} holds for $D=H_{i}'$. Therefore, $H_i'$ is a pseudorandom digraph with exceptional set $X$.

\myphead{Modify $C$ and $P$.} For each $v\in Y$, recall that the labelled vertices $d_v,b_v,a_v,c_v,e_v$ occur consecutively in this order on $C$. Note that, by \ref{basedonatruestory}, all these labelled vertices are distinct. Let $C'$ be the cycle formed by, for each $v\in Y$, deleting the vertices $a_v,b_v,c_v$ and adding the new vertex $f_v$ along with the edges $d_vf_v$ and $f_ve_v$. Note these new edges are the same whether $d_vb_v$ or $b_vd_v$ is an edge of $C$, and whether $c_ve_v$ or $e_vc_v$ is an edge of $C$. Let $P'$ be the path $P$ with these same modifications carried out, so that $P'\subset C'$.

\myphead{Modify $\bar{D}$.} We modify $\bar{D}$ similarly to our modification for $H_i$. For each $v\in Y$, we wish to delete $v,x_v$ and $y_v$ from $\bar{D}$ and replace them with $z_v$ (as created for $H_i'$), and add an in-edge from $w\in [n]\setminus Y'$ to $z_v$ if there is an appropriate edge between $w$ and $x_v$ in $\bar{D}$ (matching the edge between $d_v$ and $b_v$) and an out-edge if there is an appropriate edge between $y_v$ and $w$ in $\bar{D}$ (matching the edge between $c_v$ and $e_v$). This will result in there being an edge from $w$ to $x_v$ with probability $p$, independently of all other edges in $\bar{D}$, and, similarly, an edge from $x_v$ to $w$ independently at random with probability $p$.

More precisely, let $\bar{D}'$ be the random graph formed by deleting $Y'=\{v,x_v,y_v:v\in Y\}$ from $\bar{D}$, and adding the vertices $z_v$, $v\in Y$, and the following edges for each $v\in Y$ and $w\in [n]\setminus Y'$.

\smallskip

\begin{minipage}{0.475\textwidth}
\stepcounter{propcounter}
\begin{enumerate}[label = \textbf{\Alph{propcounter}\arabic{enumi}}]
\item ${wz_v}$ if $d_vb_v\in E(C)$ and $wx_v\in E(\bar{D})$\label{rand1}
\item ${wz_v}$ if $b_vd_v\in E(C)$ and $x_vw\in E(\bar{D})$\label{rand2}
  \end{enumerate}
  \end{minipage}
  \begin{minipage}{0.475\textwidth}
  \begin{enumerate}[label = \textbf{\Alph{propcounter}\arabic{enumi}}]\addtocounter{enumi}{2}
      \item ${z_vw}$ if $c_ve_v\in E(C)$ and $y_vw\in E(\bar{D})$\label{rand3}
    \item ${z_vw}$ if $e_vc_v\in E(C)$ and $wy_v\in E(\bar{D})$\label{rand4}
\end{enumerate}
\end{minipage}
\medskip

Finally, for each distinct $v,v'\in Y$, add the edge $z_vz_{v'}$ independently at random with probability $p$. Note that the distribution of $\bar{D}'$ is the same as the binomial random digraph with vertex set $V(H_i')$ and edge probability $p$.

\myphead{Apply Theorem~\ref{mainthm}.} Define $f:X\to V(P')$ by letting $f(z_v)=f_v$ for each $v\in Y$. Suppose $H'_{i}\cup \bar{D}'$ contains some $C_0$ which is a copy  of $C'$ in which $f_v$ is copied to $z_v$ for each $v\in Y$. We will show that, then, $H_i\cup \bar{D}$ contains a copy of $C$.

For each $v\in Y$, let $\alpha_v$ and $\beta_v$ be the copies of $d_v$ and $e_v$ on $C_0$, respectively. Let $P_v$ be the path $C[\{d_v,b_v,a_v,c_v,e_v\}]$. We will show that $\phi_v:P_v\to H_i\cup \bar{D}$ defined by $\phi_v(d_v)=\alpha_v$, $\phi_v(b_v)=x_v$, $\phi_v(a_v)=v$, $\phi_v(c_v)=y_v$ and $\phi_v(e_v)=\beta_v$ is an embedding of $P_v$ into $H_i\cup \bar{D}$, for each $v\in Y$.

By the choice of $C'$, we have $d_vf_v,f_ve_v\in E(C')$, and hence $\alpha_vz_v,z_v\beta_v\in E(H_i'\cup \bar{D}')$.
If $d_vb_v\in E(C)$ and $\alpha_vz_v\in E(H_i')$, then, as $\alpha_vz_v$ was added to $H_{i}'$ at \ref{edge1}, we have $\alpha_vx_v\in E(H_i)$. If $d_vb_v\in E(C)$ and $\alpha_vz_v\in E(\bar{D}')$, then, by the choice of edges at \ref{rand1}, we have $\alpha_vx_v\in E(\bar{D})$. Thus, if $d_vb_v\in E(C)$, then $\alpha_vx_v\in E(H_i)\cup E(\bar{D})$.
Similarly, considering \ref{edge2} and \ref{rand2}, if $b_vd_v\in E(C)$, then $x_v\alpha_v\in E(H_i)\cup E(\bar{D})$. Therefore, $\phi_v$ restricted to $\{d_v,b_v\}$ is an embedding of $P_v[\{d_v,b_v\}]$ into $H_i\cup \bar{D}$.
Similarly, from \ref{edge3}, \ref{edge4}, \ref{rand3}, and \ref{rand4}, we have that $\phi_v$ restricted to $\{c_v,e_v\}$ is an embedding of $P_v[\{c_v,e_v\}]$ into $H_i\cup \bar{D}$.
Finally, by the labelling after \ref{RR}--\ref{basedonatruestory}, we have that $\phi_v$ restricted to $\{b_v,a_v,c_v\}$ is an embedding of $P_v[\{b_v,a_v,c_v\}]$ into $H_i\cup \bar{D}$.
Thus, $\phi_v:P_v\to H_i\cup \bar{D}$ is an embedding of $P_v$ into $H_i\cup \bar{D}$, for each $v\in Y$. As $C'$ was formed by replacing $C[d_v,b_v,a_v,c_v,e_v]$ by $d_f\to f_v\to e_v$, for each $v\in Y$, if we take  each path $\alpha_vz_v\beta_v$, $v\in Y$, on $C_0$, and replace it with $\phi(P_v)$, we get a copy of $C$ in $H_i\cup\bar{D}$.

Finally, by Theorem~\ref{mainthm}, with probability at least $1-2\exp(-2\bar{n})$, $H_i'\cup \bar{D}'$ contains a copy of $C'$ in which $f_v$ is copied to $z_v$ for each $v\in Y$. As, whenever this happens, $H_i\cup \bar{D}$ contains a copy of $C$, we have
\[
\P(C\subsetsim H_i\cup \bar{D})\geq 1-\exp(-2\bar{n})\geq 1-\exp(-n).
\]
This finishes the proof of the claim, and thus the theorem.
\renewcommand{\qedsymbol}{$\boxdot$}
\qedhere
\renewcommand{\qedsymbol}{$\square$}
\qedsymbol
\end{proof}
\renewcommand{\qedsymbol}{}
\end{proof}
\renewcommand{\qedsymbol}{$\square$}
%\hspace{5cm}\textcolor{white}{.}\qed
%\end{proof}
\fi

\subsection{Proof of Theorem~\ref{hittime}}\label{sec:imp1}
We now show Theorem~\ref{thmrandprocess} implies Theorem~\ref{hittime}, which we restate for convenience.
\hittime*
\begin{proof}[Proof of Theorem~\ref{hittime} from Theorem~\ref{thmrandprocess}]
Note first that, by  \emph{\ref{rp4}} in Lemma~\ref{lem:keyrandprops}, we have, with high probability, that $m_1\geq i_2=3n\log n/4$. Furthermore, $D_{m_1+1}$ has no vertices with in- or out-degree 0, while $D_{m_1}$ has exactly one such vertex.
By Theorem~\ref{thmrandprocess}, then, for both  \emph{(i)} and \emph{(ii)} of Theorem~\ref{hittime} it is sufficient to show that, with high probability, there are no vertices $v\in V(D_{i_2})$ with $d^+_{D_{i_2}}(v),d^-_{D_{i_2}}(v)\leq 1$.

Let $p_1=(i_2-n)/n(n-1)$ and $D=D(n,p_1)$. Similarly as for \eqref{sizecalc}, by Lemma~\ref{chernoff}, we have that $\P(e(D)\leq i_2)=1-o(1)$. Therefore, it is sufficient to show that, with high probability, $D$ has no vertices $v\in V(D)$ with $d^+_{D}(v),d^-_{D}(v)\leq 1$. The probability that such a vertex does exist is at most
\[
n\sum_{k=0}^2\binom{2n-2}{k}p_1^k(1-p_1)^{2(n-1)-k}\leq 3n(2p_1n)^2\exp(-p_1(2n-4))=o(1),
\]
as required.

For \emph{(iii)} in Theorem~\ref{hittime}, by Theorem~\ref{thmrandprocess} it is sufficient to show that, with high probability, $D_{m_0}$ has at most $n^{1/2}\log^2n$ vertices with in-degree 0 or out-degree 0, and at most $\log^5n$ vertices $v\in D_{m_0}$ with $d^+_{D_{m_0}}(v)=d^-_{D_{m_0}}(v)=1$.

From \emph{\ref{rp7other}} in Lemma~\ref{lem:keyrandprops2}, we have, with high probability, that $m_0\geq i_1=(n\log n)/2-n\log\log n$.
Let $p_2=(i_1-n)/n(n-1)$ and $\widehat{D}=D(n,p_2)$. Similarly as for \eqref{sizecalc}, by Lemma~\ref{chernoff}, we have that $\P(e(\widehat{D})\leq i_1)=1-o(1)$. Therefore, it is sufficient to show that, with high probability, $D_{2}$ has at most $n^{1/2}\log^2n$ vertices with in-degree 0 or out-degree 0, and at most $\log^5n$ vertices $v\in V(D_{2})$ with $d^+_{\widehat{D}}(v),d^-_{\widehat{D}}(v)\leq 1$.

Let $X_1$ be the number of vertices with in-degree 0 or out-degree 0 in $\widehat{D}$. Then,
\[
\E X_1\leq 2n(1-p_2)^{n-1}\leq 2n\exp(-p_2(n-1)) =O(n^{1/2}\log n).
\]
Therefore, with high probability, $X_1\leq n^{1/2}\log^2 n$.

Let $X_2$ be the number of vertices $v\in V(\widehat{D})$ with $d^+_{\widehat{D}}(v),d^-_{\widehat{D}}(v)\leq 1$. Then,
\[
\E X_2\leq n\sum_{k=0}^2\binom{2n-2}{k}p_1^k(1-p_1)^{2(n-1)-k}\leq 3n(2p_1n)^2\exp(-p_2(2n-4))=O(\log^4n).
\]
Therefore, with high probability, $X_2\leq \log^5 n$. This completes the proof of \emph{(iii)} in Theorem~\ref{hittime}.
\end{proof}

\subsection{Proof of Theorem~\ref{sharpthres}}\label{sec:imp2}
Finally, using Theorem~\ref{thmrandprocess}, we deduce Theorem~\ref{sharpthres}, which we also restate for convenience.
\sharpthres*
\begin{proof}[Proof of Theorem~\ref{sharpthres} from Theorem~\ref{thmrandprocess}] Let $\mathcal{C}_n$ be the set of all $n$-vertex oriented cycles whose underlying undirected cycle is the canonical cycle with vertex set $[n]$. Recall that, for each $C\in \mathcal{C}_n$, $\lambda(C)$ is the number of vertices of $C$ with in- or out-degree 0, and $p_C=\max\{\log n,2(\log n-\lambda(C))\}/2n$ if $\lambda(C)>0$, and $p_C=\log n/n$ otherwise.

Let $\eps>0$ be small and fixed and $p=p(n)$. Note that, for Theorem~\ref{sharpthres}, we can assume that $(\min_{C\in \mathcal{C}_n}p_C)/(1-\eps)\leq p\leq (\max_{C\in \mathcal{C}_n}p_C)/(1+\eps)$, and thus that $\log n/2(1-\eps)n\leq p\leq \log n/(1+\eps)n$.

Now,  $(1+\eps)p\geq (1+\eps)\log n/2(1-\eps)n>\log n/2n$, so if we have $C\in \mathcal{C}_n$ and $p_C\geq (1+\eps)p$, then
\[
\frac{\log n-\log \lambda(C)}{n}=p_C\geq (1+\eps)p,
\]
and hence $C$ has at most $n\exp(-(1+\eps)pn)$ vertices with in-degree 0.
 On the other hand, we will show that $D=D(n,p)$ is likely to have more than $n\exp(-(1+\eps)pn)$ vertices with in-degree 0, and thus contain no such cycle.
 Note that, for each $v\in V(D)$, the probability that $d^-_{D}(v)=0$ is $(1-p)^{n-1}\geq \exp(-(1+\eps/2)pn)$. Furthermore, this is independent for each $v\in V(D)$. Thus, as $(1+\eps)pn\leq \log n$ and the expected number of vertices with $d^-_{D}(v)=0$ is at least $n\cdot \exp(-(1+\eps/2)pn)$, by Lemma~\ref{chernoff}, with high probability there are more than $n\exp(-(1+\eps)pn)$ vertices with degree $0$ in $D$. Thus, with high probability, $D(n,p)$ contains no cycle $C\in \mathcal{C}_n$ with $p_C\geq (1+\eps)p$.

Now, if $C\in \mathcal{C}_n$ has $p_C\leq (1-\eps)p$, then
\[
\frac{\log n-\log \lambda(C)}{n}\leq (1-\eps)p,
\]
and hence $C$ has at least $n\exp(-(1-\eps)pn)$ vertices with in-degree 0.
On the other hand, the expected number of vertices in $D(n,p)$ with out-degree 0 or in-degree 0 is at most
\[
2n(1-p)^{n-1}\leq 2n\exp(-p(n-1)) =o(n\exp(-(1-\eps)pn)/\log^2n).
\]
Therefore, with high probability, $D(n,p)$ has at most $n\exp(-(1-\eps)pn)/\log^2n$ vertices with in- or out-degree 0.
Furthermore, as $\log n/2(1-\eps)n\leq p\leq \log n/(1+\eps)n$, the probability that  $D(n,p)$ contains a vertex with total in- and out-degree less than 3 is at most
\[
n\cdot \sum_{i=0}^2\binom{2n-2}{i}p^i(1-p)^{2(n-1)-i}\leq 3n(2np)^2\exp(-p(2n-4)) =o(1).
\]
Therefore, with high probability, each vertex in $D(n,p)$ has total in- and out-degree at least 3 (and hence, in particular, no vertices with in- and out-degree both 1).

Let $\mathcal{P}$ be the property of digraphs $D$ such that, for all $s, t$ and $n$, if $D$ has $n$ vertices, $s$ of which have in-degree 0 or out-degree 0 and $t$ of which have in-degree 1 and out-degree 1, and $d^+_D(v)+d^-_D(v)\geq 2$, then $D$ contains a copy of every $n$-vertex cycle with at least $1+(s-1)\log n$ changes in direction and at most $n-1-(t-1)\log n$ changes in direction. Thus, it is sufficient to complete the proof of the theorem to show that, with high probability, $D(n,p)\in \cP$.

Let $\eta>0$. By Theorem~\ref{thmrandprocess}, there is some $n_0$ such that, for each $0\leq M\leq n(n-1)$, if $D_{n,M}$ is a random digraph chosen uniformly from those with vertex set $[n]$ and $M$ edges, then $\P(D_{n,M}\in \mathcal{P})\geq 1-\eta$.
Then,
\[
\P(D(n,p)\in \mathcal{P})=\sum_{M=0}^{n(n-1)}\P(e(D(n,p))=M)\cdot \P(D_{n,M}\in \mathcal{P})\geq \sum_{M=0}^{n(n-1)}\P(e(D(n,p))=M)\cdot (1-\eta)=(1-\eta).
\]
Thus, as $n_0$ was chosen depending only on $\eta$, we have, with high probability, that $D(n,p)$ is in $\mathcal{P}$, as required.
\end{proof}

\section*{Acknowledgements}
The author would like to thank Asaf Ferber and Benny Sudakov for useful conversations behind this paper.

%\bibliographystyle{abbrv}
%\bibliography{allori}

\end{document}